\newcommand{\rr}{{\mathbb R}}
\newcommand{\beq}{\begin{eqnarray*}}
\newcommand{\feq}{\end{eqnarray*}}
\newcommand{\beqn}{\begin{eqnarray}}
\newcommand{\feqn}{\end{eqnarray}}
\newtheorem{theorem}{Theorem}
\makeatletter \@addtoreset{theorem}{section}\makeatother
\newtheorem{lemma}[theorem]{Lemma}
\newtheorem{assume}[theorem]{Assumption}
\newtheorem*{theorem*}{Theorem}
\newtheorem{proposition}[theorem]{Proposition}
\newtheorem{corollary}[theorem]{Corollary}
\newtheorem{definition}[theorem]{Definition}
\newtheorem{meta}[theorem]{Principle}
\newtheorem{ex}[theorem]{Example}
\begin{document}
\title{Quasi-Limiting Behavior of Drifted Brownian Motion}
\author{
SangJoon Lee\thanks{Department of Mathematics, University of Connecticut, Storrs, CT 06269-1009, USA; \newline e-mail:
sangjoon.lee@uconn.edu}
\and
Iddo Ben-Ari\thanks{Department of Mathematics, University of Connecticut, Storrs, CT 06269-1009, USA; \newline e-mail:
iddo.ben-ari@uconn.edu}
}
\maketitle
\begin{abstract}
A Quasi-Stationary Distribution  (QSD) for a Markov process with an almost surely hit absorbing state is a time-invariant initial distribution for the process conditioned on not being absorbed by any given time. An initial distribution for the process is in the domain of attraction of some QSD $\nu$  if the distribution of the process a time $t$, conditioned not to be absorbed by time $t$ converges to $\nu$ as $t$ tends to infinity.  We study Brownian motion with constant drift on the half line $[0,\infty)$ absorbed at $0$. Previous work by Martinez {\it et al.} \cite{QSD_1994} \cite{QSD_1998} identifies all QSDs and provides a nearly complete characterization for their domains of attraction. Specifically, it was shown that if the distribution a well-defined exponential tail (including the case of lighter than any exponential tail), then it is in the domain of attraction of a QSD determined by the exponent. In this work we expand the discussion regarding the dependence on the initial distribution through 
\begin{enumerate}
    \item Obtaining a new approach to existing results, explaining  the direct relation between a QSD and an initial distribution in its domain of attraction; and 
    \item Considering  a wide class of heavy-tailed initial distributions, where non-trivial limits are obtained under appropriate scaling. 
\end{enumerate}
\end{abstract}

\section{Introduction}

Here we review the origin and some well-known results of the study of QSDs. In section \ref{sec:gen}, we will present the general definition of QSD and related theorems. In section \ref{sec:QSD}, we will  introduce the specific model we work in this paper, and present some previous results on the model.

\subsection{Definitions and General Results}\label{sec:gen} 
Consider ${\bf X} =(X_t:t\ge 0)$, a Markov process on  $\mathbb{R}_+=[0,\infty)$ with $0$ as a unique absorbing state. Let 
$$ \tau = \inf\{t\ge 0:X_t = 0\}.$$ 
We will work under the assumption 
\begin{equation}
\label{eq:zero_attainable}
P_x (\tau < \infty) = 1,\mbox{ for all }x \in \rr_+. 
\end{equation} 

The notation $P_x$ is a shorthand for  the distribution of ${\bf X}$ with initial distribution, the distribution of $X$, equal to the Dirac-delta measure at $x$.


If $\pi$ is a stationary distribution for ${\bf X}$, then \eqref{eq:zero_attainable} guarantees that $\pi = \delta_0$, \cite[Section 2.2]{QSD_book}.
While this result is not very interesting, the  distribution of the process and particularly of $X_t$ {\it conditioned } on $\{\tau >t\}$, is in general far from trivial. 
This naturally leads to the following ``conditional'' analog for a stationary distribution: 
\begin{definition} 
\label{def:qsd} 
The probability distribution $\pi$ is a Quasi-Stationary Distribution (QSD) for ${\bf X}$ if 
$$P_\pi(X_t \in \cdot\; |  \; \tau > t) = \pi \text{ for all } t >0.$$
\end{definition}
A seemingly more relaxed definition, in the spirit of ergodic theorems for Markov Chains,  is the following: 
\begin{definition}
\label{def:qld} 
A probability distribution $\pi$ is a Quasi-Limiting Distribution (QLD) for ${\bf X}$ if for some  $\mu$, 
\begin{equation} 
\label{eq:QLD} 
\lim_{t \rightarrow \infty} P_\mu(X_t \in  \cdot \; | \; \tau > t)=\pi,
\mbox{ in distribution}. 
\end{equation} 
\end{definition}
where, as usual, $P_\pi$ and $P_\mu$ are shorthand for the distribution of ${\bf X}$ with initial distribution  equal to  $\pi$ or $\mu$, respectively.  QLDs corresponding to an initial distributions which are  Dirac-delta (or, more generally, compactly supported initial distributions) are known as Yaglom limits. \\

Of course, a QSD is a QLD. A partial converse holds under a standard regularity assumption (the Feller property):
\begin{proposition}
\label{prop:QLD} 
Suppose that for every $t>0$ and continuous and bounded function $f$ on $(0,\infty)$, the function $x\to E_x [ f(X_t),\tau>t]$ is continuous. Then every QLD for ${\bf X}$ is a QSD for ${\bf X}$. 
\end{proposition}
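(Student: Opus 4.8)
The plan is to combine the Markov property with the continuity (Feller) hypothesis of the proposition, the latter being precisely what allows a weak limit to be passed through the killed semigroup. Fix an initial distribution $\mu$ witnessing that $\pi$ is a QLD, and for $t$ large enough that $P_\mu(\tau>t)>0$ write $\mu_t:=P_\mu(X_t\in\cdot\mid\tau>t)$ for the normalized conditional law, so that $\mu_t\to\pi$ weakly. For $s>0$ let $(P_sf)(x):=E_x[f(X_s),\,\tau>s]$ be the sub-Markovian killed semigroup acting on bounded measurable $f$. Observe $0\le P_s\mathbf 1\le 1$ and $|P_sf|\le\|f\|_\infty\,P_s\mathbf 1$, so $P_s$ maps bounded functions to bounded functions; and by the hypothesis, applied both to a given continuous bounded $f$ and to the constant $f\equiv 1$, the functions $P_sf$ and $P_s\mathbf 1=P_\cdot(\tau>s)$ are continuous on $(0,\infty)$, hence bounded and continuous.

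The first step is to establish, by the Markov property at time $t$, the identity
$$\mu_{t+s}(f)=\frac{\mu_t(P_sf)}{\mu_t(P_s\mathbf 1)},\qquad s>0,$$
valid for every bounded measurable $f$. Indeed, for a Borel set $A$ one has $P_\mu(X_{t+s}\in A,\ \tau>t+s)=E_\mu[\mathbf 1_{\{\tau>t\}}\,(P_s\mathbf 1_A)(X_t)]$; dividing by $P_\mu(\tau>t)$ gives $\mu_{t+s}(A)\,\mu_t(P_s\mathbf 1)=\mu_t(P_s\mathbf 1_A)$, since taking $A=(0,\infty)$ yields $P_\mu(\tau>t+s)/P_\mu(\tau>t)=\mu_t(P_s\mathbf 1)$. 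The identity then passes from indicators to general bounded $f$ by linearity and bounded convergence.

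The second step is to let $t\to\infty$. On the left, $\mu_{t+s}\to\pi$ weakly gives $\mu_{t+s}(f)\to\pi(f)$ for every bounded continuous $f$. On the right, $P_sf$ and $P_s\mathbf 1$ are bounded and continuous on $(0,\infty)$, so $\mu_t(P_sf)\to\pi(P_sf)$ and $\mu_t(P_s\mathbf 1)\to\pi(P_s\mathbf 1)=:c_s=P_\pi(\tau>s)$. Granting $c_s>0$, this forces $\pi(f)=\pi(P_sf)/c_s$ for all bounded continuous $f$ and all $s>0$; as bounded continuous functions determine a finite Borel measure on $(0,\infty)$, this says exactly that $P_\pi(X_s\in\cdot\mid\tau>s)=\pi$ for every $s>0$, i.e.\ $\pi$ is a QSD.

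The delicate point, which I expect to be the only real obstacle, is the strict positivity of $c_s$; everything else is soft and uses only the Markov property and the stated continuity. I would argue it two ways. Directly: $\pi$ is the weak limit of probability measures $\mu_t$ each carried by $(0,\infty)$, hence is a probability on $(0,\infty)$, and path continuity gives $P_x(\tau>0)=1=\lim_nP_x(\tau>1/n)$ for $x>0$, so $P_x(\tau>s)>0$ there and $c_s=\int_{(0,\infty)}P_x(\tau>s)\,\pi(dx)>0$. Softly: setting $\ell(s):=\lim_t \mu_t(P_s\mathbf 1)=\lim_t P_\mu(\tau>t+s)/P_\mu(\tau>t)$, a shift of the time index shows $\ell$ is multiplicative and non-increasing on $(0,\infty)$, so either $\ell\equiv 0$ or $\ell(s)=e^{-\theta s}>0$ for some $\theta\in[0,\infty)$; and $\ell\equiv 0$ would force $\tau=0$ $P_\pi$-a.s., impossible for a probability measure on $(0,\infty)$. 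This is the one place the concrete structure of the model enters.
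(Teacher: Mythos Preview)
Your proof is correct and follows essentially the same route as the paper's: both use the Markov property to write the time-$(t+s)$ conditional law in terms of the time-$t$ conditional law acting on the killed semigroup $P_s$, and then invoke the Feller hypothesis to pass the weak limit $\mu_t\to\pi$ through $P_sf$ and $P_s\mathbf{1}$. The only organizational difference is that the paper first fixes an $f$ with $\pi(f)\ne 0$ to deduce that the ratio $P_\mu(\tau>t)/P_\mu(\tau>t+s)$ converges to a finite nonzero limit $R(s)$, and then identifies $R(s)=1/P_\pi(\tau>s)$ by taking $f\equiv 1$; you instead compute $c_s=\lim_t\mu_t(P_s\mathbf{1})=P_\pi(\tau>s)$ directly via weak convergence and then argue separately that $c_s>0$. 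Your treatment of the positivity of $c_s$ is in fact more explicit than the paper's.
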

For the sake of completeness, we provide a proof in Appendix \ref{appendix1}.\\

We comment that QLDs are very often defined by requiring a {\it pointwise limit} rather than limit in distribution. That is \eqref{eq:QLD} in Definition \ref{def:qld} is replaced by 
\begin{equation} \label{eq:QLD_pointwise} \lim_{t\to\infty} P_{\mu} (X_t \in A| \tau>t)=\pi(A)\mbox{  for all measurable }A\subseteq (0,\infty).\end{equation} 
With this definition the conclusion of Proposition \ref{prop:QLD} holds without the additional regularity condition we imposed.  See \cite[Definition 1 and Proposition 1]{villemonais2011}.\\

As in the sequel we will only work with processes satisfying the condition in the proposition, we always consider QLDs as QSDs. In light of the above, when  $\mu$ and $\pi$ are as in Definition \ref{def:qld}, we say that $\mu$ is in the {\it domain of attraction of } the QSD $\pi$. Of course, the domain of attraction of any QSD contains itself.  \\ 

Figure \ref{fig:qsd} illustrates the difference between the unconditioned process, the process that is required to be positive only at the given time, and the process that is required to never hit $0$ up to the given time.\\

\begin{figure}
\centering
\begin{subfigure}{0.4\textwidth}
\includegraphics[width=0.9\linewidth]{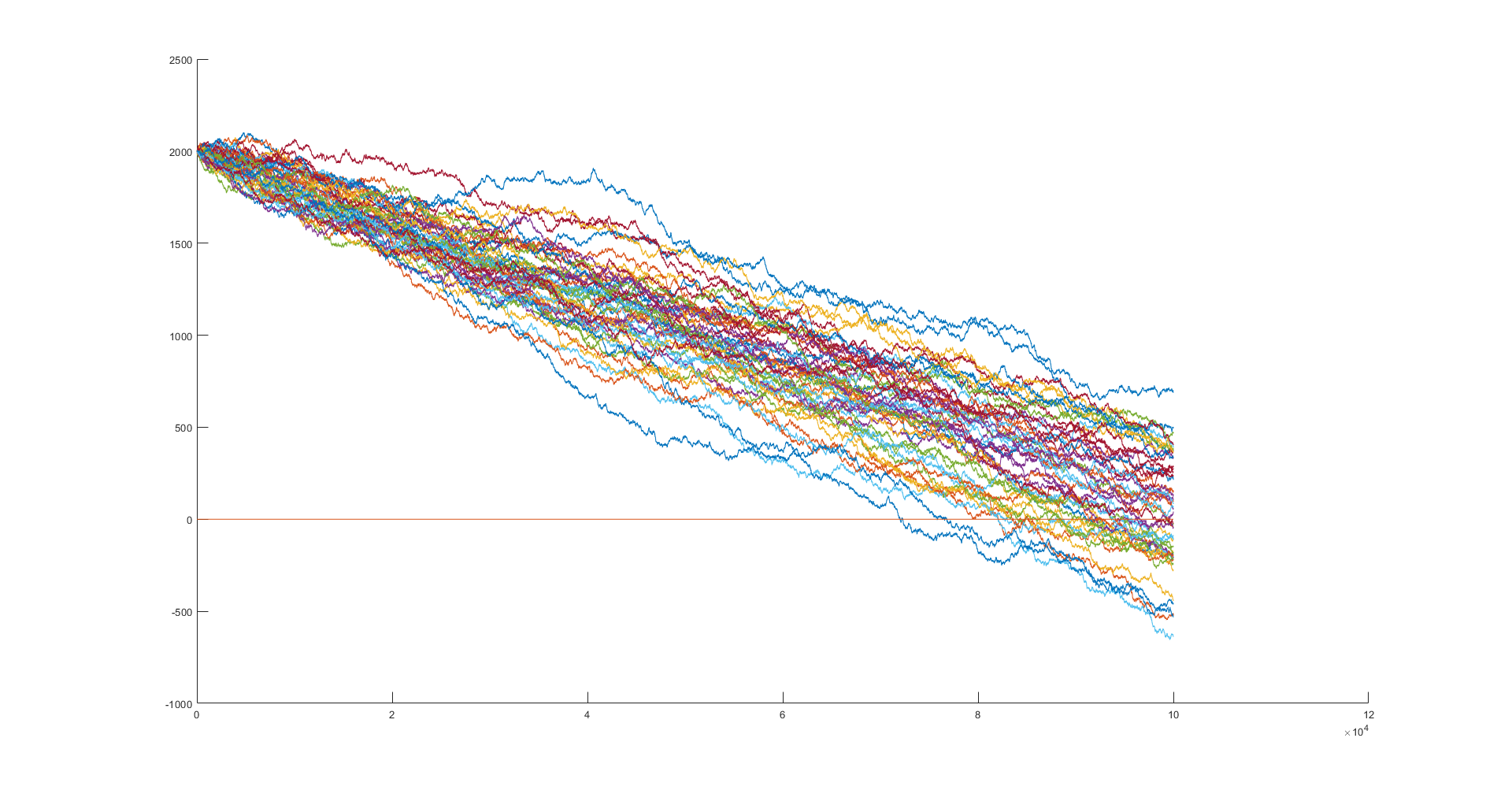}
\caption{Sample paths of 1-dimensional Brownian Motion with constant negative drift $-0.02$, with fixed initial state $X_0 = 2000$}
\end{subfigure}
\quad
\begin{subfigure}{0.4\textwidth}
\includegraphics[width=0.9\linewidth]{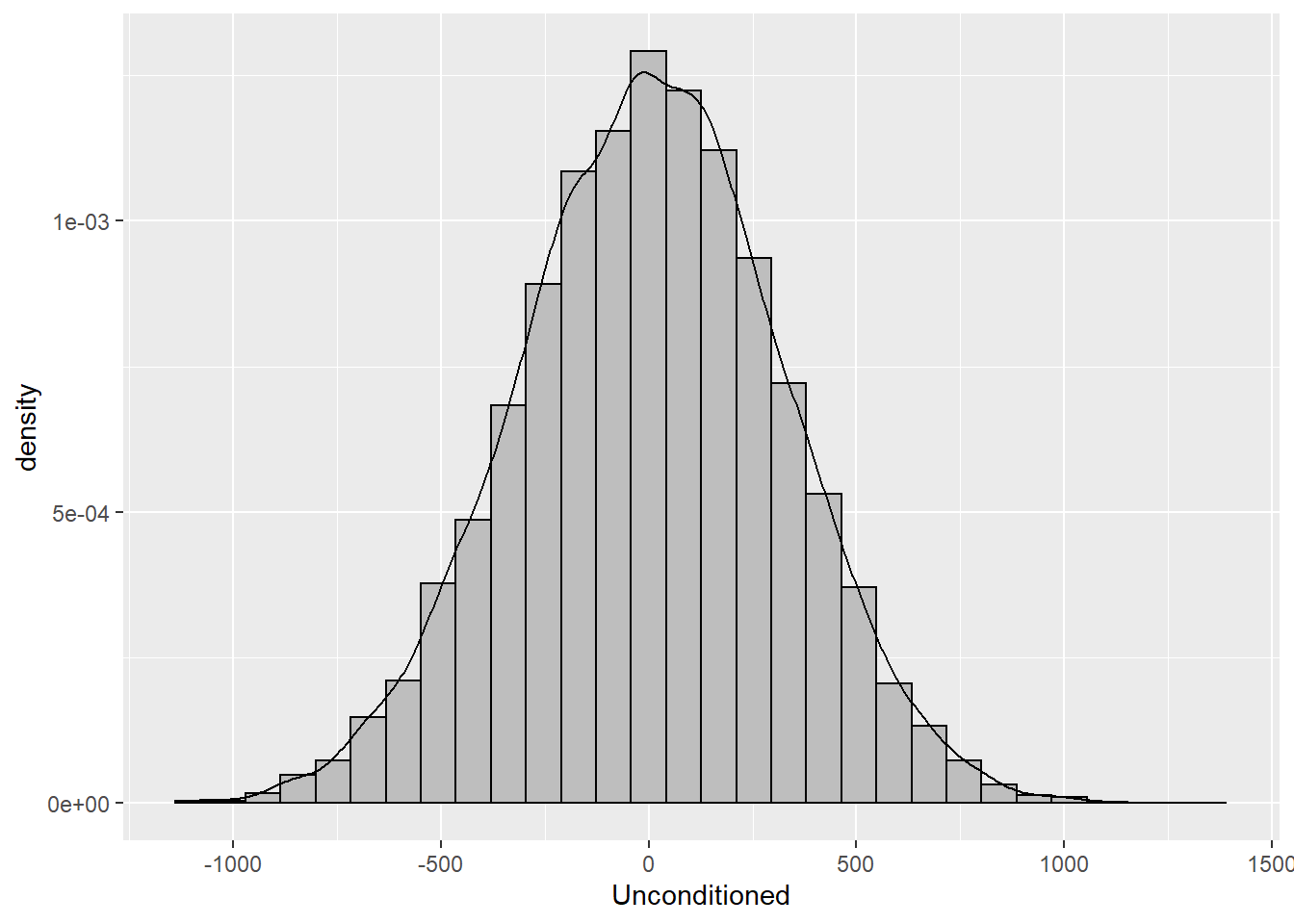}
\caption{PDF plot of 1-dimensional Brownian Motion with same drift and initial state, at $t = 100000$. Sample size is $10000$. As expected, $X_{10000}$ follows a Gaussian distribution.}
\end{subfigure}
\label{fig:non_qsd}
\vskip\baselineskip
\begin{subfigure}{0.4\textwidth}
\includegraphics[width=0.9\linewidth]{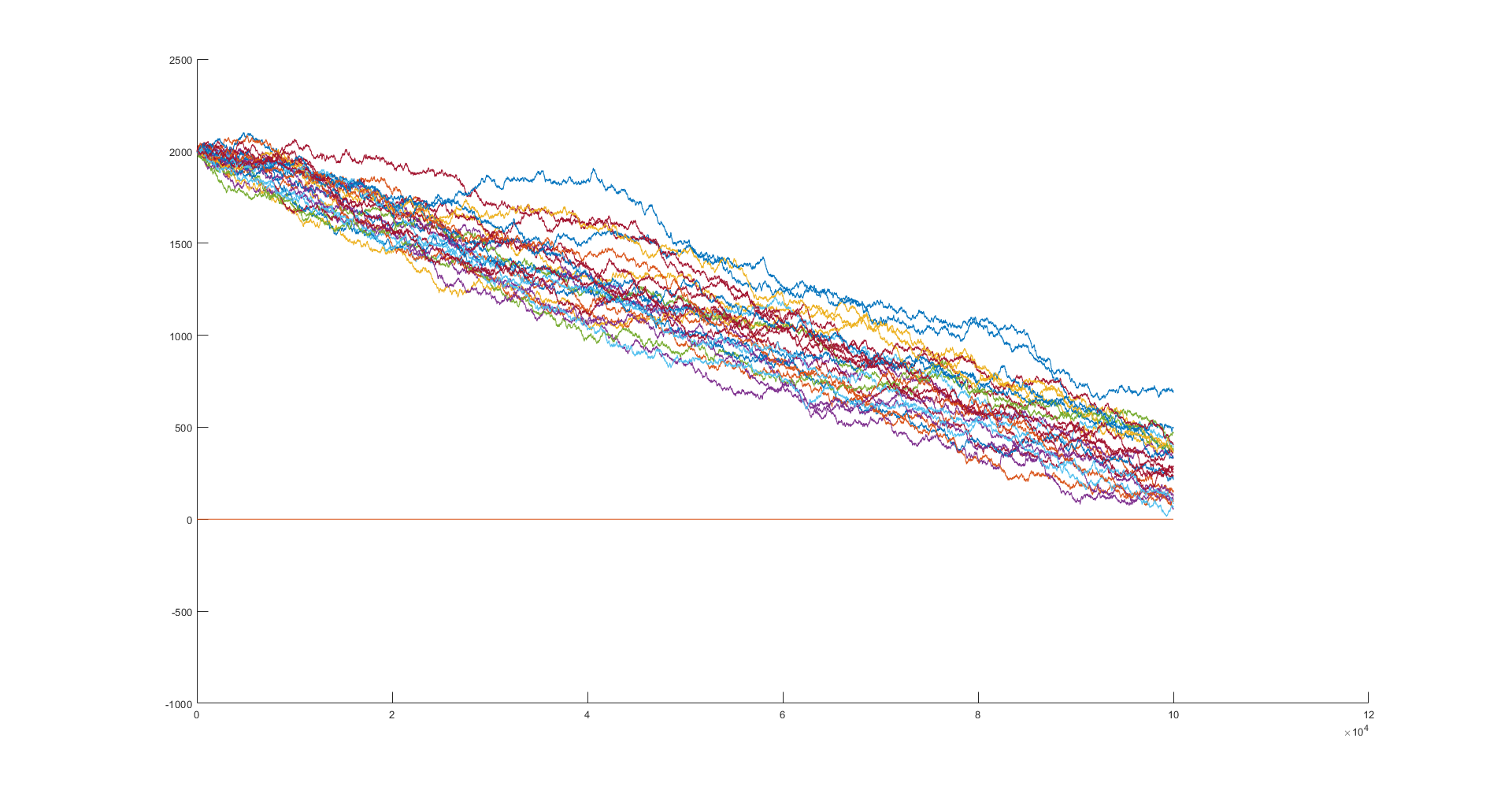}
\caption{Sample paths of same processes, conditioned not to be absorbed by $t = 100000$}
\end{subfigure}
\quad
\begin{subfigure}{0.4\textwidth}
\includegraphics[width=0.9\linewidth]{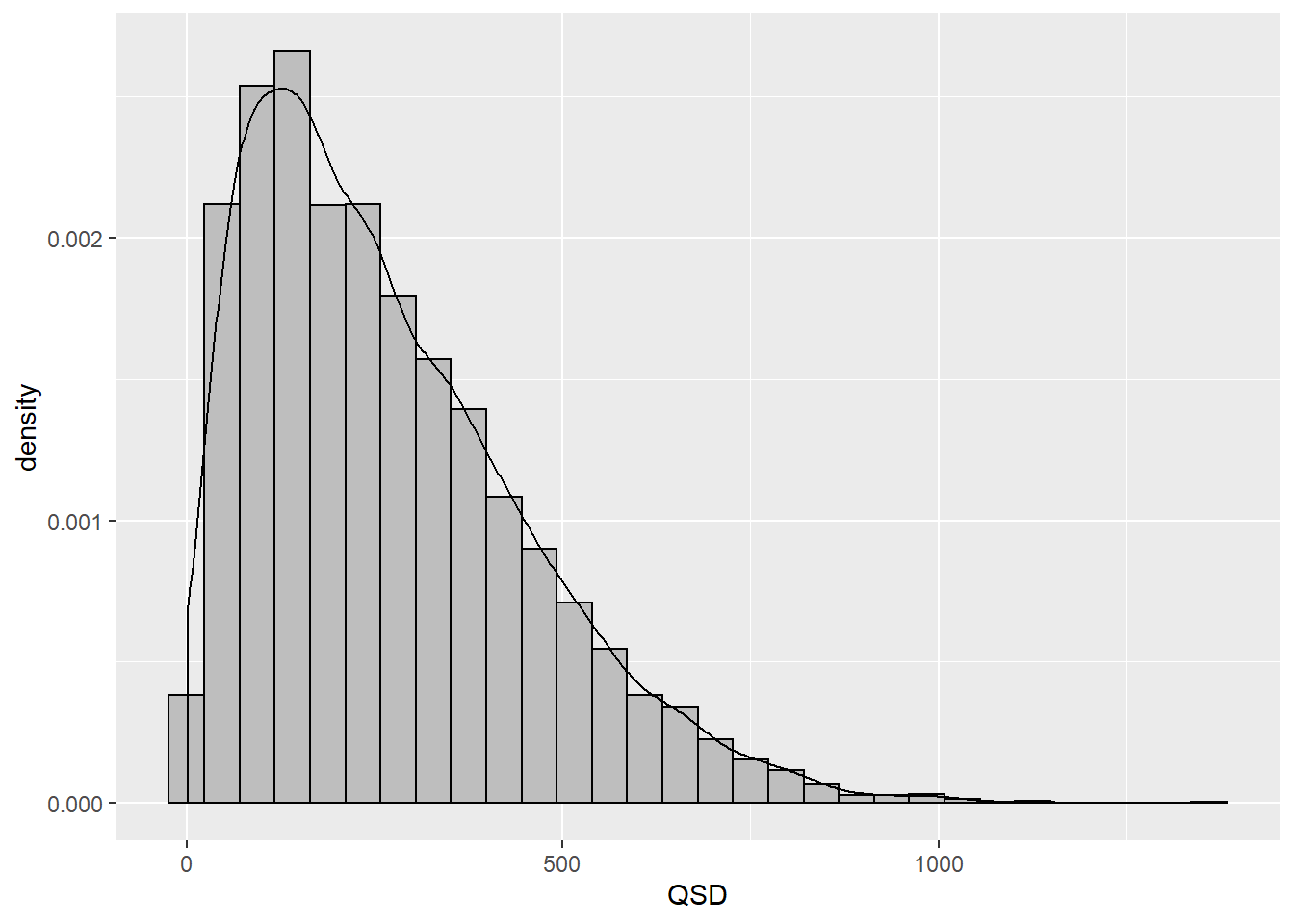}
\caption{PDF plot of the same sample processes and same condition. Unlike above, this distribution has exponential tail. Also, the density near $0$ drops significantly in this setting.}
\end{subfigure}
\caption{Illustration between unconditioned stochastic process and process conditioned not to be absorbed by a given time}
\label{fig:qsd}
\end{figure}

Unlike uniqueness of stationary distribution under irreducibility assumptions, QSDs are in general not unique, and typically a continuum of QSDs exists. Notable exceptions of this are Markov chains on finite state spaces (with a unique absorbing state) and certain diffusion processes on bounded domains absorbed at the boundary. One strategy of finding QSDs is  to study the quasi-limiting behavior under different initial distributions. When the class of QSDs is known it is natural to ask what is the domain of attraction of each.  \\



The concept of QSD is fairly intuitive and straightforward, as the idea was first introduced as early as 1931 by Wright \cite{wright1931}, and the terms related to QSD have been crystallized in 1950s by Bartlett \cite{bartlett1957} \cite{bartlett1960}. Mathematically, Yaglom \cite{yaglom1947} first showed an explicit solution to a limiting conditional distribution for the for the subcritical Bienaym\'e-Galton-Watson
branching process. In the discrete setting there are detailed results for some specific models; for example, explicit description of QSDs are known for certain birth-and-death processes \cite[Theorem 5.4]{QSD_book}. As for uniqueness, a necessary and sufficient conditions for birth-and-death processes were obtained by van Doorn \cite{van1991quasi} Mart\'{i}nez, San Mart\'{i}n and Villemonais later generalized the result to countable state processes \cite{QSD_2014}. For other discrete state space models, Buiculescu studied QSDs for multi-type Galton-Watson processes \cite{buiculescu1975}, and Ferrari and Mari\'{c} discussed QSDs approximated by Fleming-Viot processes \cite{ferrari2007}. A survey of results provided by van Doorn and Pollett \cite{doorn2013} gives a comprehensive view of the progress on the discrete space models.\\

Our work is on Brownian Motion with constant drift, which is one among a few models where a lot is known explicitly, in part because it is a Gaussian process. Our main object of interest is the dependence on the initial distribution, and is in continuation to the works of  Mart\'{i}nez and San Mart\'{i}n who identified all QSDs for the model \cite{QSD_1994}, and later identified the domain of attraction for each QSD \cite{QSD_1998}. Rates of convergence to Yaglom limits are also studied by Polak and Rolski \cite{polak_rolski}, and O\c{c}afrain \cite{ocafrain}. Another diffusion - notably also a Gaussian processes -  where explicit results are known is the  Ornstein-Uhlenbeck process:  Lladser and San Mart\'{i}n \cite{lladser2000} classified QSDs through their domains of attraction.  Ye \cite{jun2008} identified the Yaglom limit for fractional-dimensional radial Ornstein-Uhlenbeck processes. As for general theory for diffusion processes, there has been much work and progress on conditions for existence and uniqueness of QSDs and on convergence to the Yaglom limit.  Here is a partial list of references:   Pinsky \cite{pinsky} (smooth bounded domains with absorption on the boundary), Cattiaux, Collet, Lambert, Amaury, Mart\'{\i}nez, M\'{e}l\'{e}ard  and San Mart\'{\i}n \cite{cat_collet_et_al},  Steinsaltz and Evans \cite{steinsaltz2007}, Kolb and Steinsaltz \cite{kolb2012}, and Hening and Kolb \cite{hening2019} (uniqueness and convergence for one dimensional diffusions) and Champangnat and Villemonais \cite{champ_vil} (rates of convergence for one-dimensional diffusions). \\

We close this section with the well-known properties related to QSDs and QLDs.  
\begin{theorem}
\cite[Theorem 2.2]{QSD_book}
\label{th:exp_tau}
Suppose that $\pi$ is a QSD. Then under $P_\pi$,  $\tau$ is exponentially distributed with parameter $\lambda_\pi>0$. 
\end{theorem}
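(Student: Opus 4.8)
The plan is to reduce the statement to the classical characterization of the exponential law as the unique distribution with the lack-of-memory property, realized here through the QSD identity. Concretely, set $G(t) := P_\pi(\tau > t)$; I will show that $G$ satisfies the multiplicative functional equation $G(t+s) = G(t)G(s)$ for all $s,t>0$, deduce $G(t) = e^{-\lambda_\pi t}$ from monotonicity, and then exclude the degenerate values $\lambda_\pi \in \{0,\infty\}$.

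First I would establish the multiplicative functional equation. Fix $s,t>0$. Since $\{\tau > t+s\}\subseteq\{\tau>t\}$,
\[
G(t+s) = P_\pi(\tau > t)\cdot P_\pi(\tau > t+s \mid \tau > t).
\]
Conditioning on $X_t$ and using the Markov property at time $t$, together with the fact that $\{\tau>t\}\in\sigma(X_u:u\le t)$, gives
\[
P_\pi(\tau > t+s \mid \tau > t) = E_\pi\!\left[\,P_{X_t}(\tau > s)\ \middle|\ \tau > t\,\right] = \int_{(0,\infty)} P_x(\tau > s)\,\pi(dx),
\]
where the last equality is precisely Definition~\ref{def:qsd}: under $P_\pi$, the conditional law of $X_t$ given $\{\tau>t\}$ equals $\pi$ itself. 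The integral equals $P_\pi(\tau > s) = G(s)$, so $G(t+s) = G(t)G(s)$.

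Next I would solve this equation. A QSD is necessarily supported on $(0,\infty)$ and satisfies $G(t)>0$ for every $t$: the conditional probability in Definition~\ref{def:qsd} is only meaningful when $P_\pi(\tau>t)>0$, and if $G(t_0)=0$ for some $t_0$, then $G(t_0/n)^n = G(t_0) = 0$ would force $G\equiv 0$ on $(0,\infty)$, a contradiction. Hence $H := -\log G$ is a finite, nonnegative, nondecreasing, additive function on $(0,\infty)$, so $H(t)=\lambda_\pi t$ with $\lambda_\pi := H(1)\in[0,\infty)$; that is, $\tau$ is exponentially distributed under $P_\pi$. Finally, integrating \eqref{eq:zero_attainable} against $\pi$ yields $P_\pi(\tau<\infty)=1$, so $G(t)\to 0$ as $t\to\infty$, which rules out $\lambda_\pi=0$ and leaves $\lambda_\pi\in(0,\infty)$, as claimed.

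The hard part is the Markov-property step: one must know that a regular conditional distribution of $X_t$ given $\{\tau>t\}$ exists and that $x\mapsto P_x(\tau>s)$ is measurable, so that the displayed integral is legitimate. For the drifted Brownian motion studied here, and indeed for any process meeting the mild continuity hypothesis of Proposition~\ref{prop:QLD} that we assume throughout, this is routine; the rest is the standard passage from the lack-of-memory property to the exponential distribution.
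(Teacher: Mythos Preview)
Your argument is correct and is the standard textbook proof: derive the lack-of-memory identity $G(t+s)=G(t)G(s)$ from the Markov property together with the defining QSD identity, solve the Cauchy equation using monotonicity, and rule out the degenerate parameter values via \eqref{eq:zero_attainable}.

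Note, however, that the paper does not actually supply its own proof of this statement; it is quoted verbatim from \cite[Theorem~2.2]{QSD_book}. The only place in the paper where the exponential law of $\tau$ is \emph{derived} is Appendix~\ref{appendix3}, and there the route is different and model-specific: for the drifted Brownian motion one shows that any smooth density $\pi$ solving the eigenvalue problem ${\cal L}_\alpha^*\pi=-\lambda\pi$ satisfies $\frac{d}{dt}E_\pi[f(X_t),\tau>t]=-\lambda\,E_\pi[f(X_t),\tau>t]$ via integration by parts, whence $P_\pi(\tau>t)=e^{-\lambda t}$ follows by taking $f\equiv 1$. That argument simultaneously identifies the QSDs and the decay rate through the generator, whereas your approach is the general, purely probabilistic one that works for any Markov process satisfying the standing hypotheses and requires no analytic information about ${\cal L}_\alpha$.
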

\begin{proposition}\label{prop:heavytail}
Let the assumption of Proposition \ref{prop:QLD} hold.  Let $\mu$ is in the domain of attraction of the QSD $\pi$. Then for every $\epsilon>0$, 
$$P_\mu(\tau > t) = o(e^{-(\lambda_\pi-\epsilon)t}),$$ 
where $\lambda_\pi$ is as in Theorem \ref{th:exp_tau}.
\end{proposition}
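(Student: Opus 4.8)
The plan is to use the Markov property to turn the domain-of-attraction hypothesis into a \emph{ratio limit} for the survival function $g(t):=P_\mu(\tau>t)$, and then to bootstrap that ratio limit into the claimed decay rate by an elementary Ces\`aro argument. Note first that $g(t)>0$ for all $t>0$ is implicit in the statement, since otherwise the conditional law in Definition~\ref{def:qld} is undefined; I take this for granted, and I record that $g$ is nonincreasing with $g\le 1$, so that $h:=-\log g$ is nonnegative and nondecreasing.

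First I would fix $s>0$, split $\{\tau>t+s\}$ at time $t$, and apply the Markov property. On $\{\tau>t\}$ one has $X_t\in(0,\infty)$, and
\[
P_\mu(\tau>t+s)=E_\mu\!\left[\mathbf{1}_{\{\tau>t\}}\,P_{X_t}(\tau>s)\right],
\qquad\text{so}\qquad
\frac{g(t+s)}{g(t)}=E_\mu\!\left[\,P_{X_t}(\tau>s)\;\big|\;\tau>t\,\right].
\]
Then I would let $t\to\infty$. The function $x\mapsto P_x(\tau>s)$ is $x\mapsto E_x[f(X_s),\tau>s]$ with $f\equiv 1$, hence bounded and continuous on $(0,\infty)$ by the standing hypothesis (that of Proposition~\ref{prop:QLD}). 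Since $\mu$ is in the domain of attraction of $\pi$, the conditional laws $P_\mu(X_t\in\cdot\mid\tau>t)$ converge in distribution to $\pi$, whence
\[
\lim_{t\to\infty}\frac{g(t+s)}{g(t)}=\int_{(0,\infty)}P_x(\tau>s)\,d\pi(x)=P_\pi(\tau>s)=e^{-\lambda_\pi s},
\]
the last equality being Theorem~\ref{th:exp_tau}. Taking logarithms, $h(t+s)-h(t)\to\lambda_\pi s$ as $t\to\infty$, for every fixed $s>0$.

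To conclude I would specialize to $s=1$. The increments $a_k:=h(k+1)-h(k)$ converge to $\lambda_\pi$, so by the Ces\`aro lemma $\tfrac1n\sum_{k=0}^{n-1}a_k=\tfrac1n\big(h(n)-h(0)\big)\to\lambda_\pi$, i.e.\ $h(n)/n\to\lambda_\pi$; monotonicity of $h$ together with the sandwich $h(\lfloor t\rfloor)\le h(t)\le h(\lfloor t\rfloor+1)$ upgrades this to $h(t)/t\to\lambda_\pi$ along the reals. Hence, for $\epsilon\in(0,\lambda_\pi)$, eventually $g(t)\le e^{-(\lambda_\pi-\epsilon/2)t}$, so $g(t)\,e^{(\lambda_\pi-\epsilon)t}\le e^{-\epsilon t/2}\to 0$; and for $\epsilon\ge\lambda_\pi$ the claim is immediate because $g(t)\to 0$ by \eqref{eq:zero_attainable} while $e^{-(\lambda_\pi-\epsilon)t}\ge 1$. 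This yields $P_\mu(\tau>t)=o(e^{-(\lambda_\pi-\epsilon)t})$.

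The one delicate point is the passage to the limit in the second display: one must know that $x\mapsto P_x(\tau>s)$ is a legitimate test function for the weak convergence, which is exactly what the Feller-type continuity assumption of Proposition~\ref{prop:QLD} secures, and that no mass of the conditional laws escapes to $0$ or to $\infty$, which is automatic here since these are probability measures converging weakly to the probability measure $\pi$. Everything else --- the Markov splitting, the logarithm, and the Ces\`aro step --- is routine.
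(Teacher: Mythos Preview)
Your proof is correct and follows essentially the same route as the paper's: both use the Markov property to write $g(t+s)/g(t)$ as a conditional expectation of $x\mapsto P_x(\tau>s)$, invoke the domain-of-attraction hypothesis together with the Feller continuity to get the ratio limit $e^{-\lambda_\pi s}$, then specialize to $s=1$ and telescope. Your write-up is a bit more explicit about the Ces\`aro step and the integer-to-real upgrade via monotonicity, whereas the paper phrases the iteration directly as $P_\mu(\tau>t_0+n)\le (1+\epsilon)^n e^{-n\lambda_\pi}P_\mu(\tau>t_0)$, but there is no substantive difference.
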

We give an elementary proof in Appendix \ref{appendix2}. See also \cite[Proposition 5]{villemonais2011} for a sharper result under slightly stronger assumptions. 

\subsection{Quasi Stationarity for Drifted BM}\label{sec:QSD}
In this section and the sequel we will work under the following: 
\begin{assume}
\label{assume:drifted_BM}
${\bf X}$ is Brownian Motion (BM) with constant negative drift $-\alpha$, $\alpha>0$, on $\rr_+$ absorbed at $0$. 
\end{assume}

Analytically, BM with constant drift $-\alpha$ on  $\rr_+$  absorbed at $0$ is the sub-Markovian process generated by  ${\cal L}_{\alpha}$, which for each $u$ satisfying $u \in C^2(\rr_+)$ and $u(0) = 0$, 
$$ {\cal L}_\alpha u = \frac 12 u'' - \alpha u'.$$ 

The works by  Mart\'{i}nez, Picco and San Mart\'{i}n \cite{QSD_1994}\cite{QSD_1998} studied QSDs for this class of models. The formal derivation for densities of the QSDs, as presented in their main results, will be given in Appendix \ref{appendix3}.

\begin{theorem} \cite[Proposition 1]{QSD_1994}
Every QSD for ${\bf X}$ is of the form $\pi_\gamma$ for some $\gamma \in [0,\alpha)$. 
\end{theorem}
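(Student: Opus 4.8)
The plan is to reduce the whole classification to one Laplace‑transform identity. Let $\pi$ be a QSD. By Theorem~\ref{th:exp_tau}, under $P_\pi$ the absorption time $\tau$ is exponential with some parameter $\lambda=\lambda_\pi>0$, so $E_\pi[e^{-\theta\tau}]=\lambda/(\lambda+\theta)$ for all $\theta>-\lambda$. On the other hand, the absorption time of Brownian motion with drift $-\alpha$ has a classical closed form: for $x>0$ and $\theta>0$,
\[
E_x\!\left[e^{-\theta\tau}\right]=e^{-s(\theta)x},\qquad s(\theta):=\sqrt{\alpha^{2}+2\theta}-\alpha,
\]
which I would obtain in the usual way, either by optional stopping applied to the exponential martingale $e^{c(X_t-x)+(c\alpha-c^{2}/2)t}$ with the root $c=\alpha-\sqrt{\alpha^{2}+2\theta}<0$ (bounded on $[0,\tau]$ because $X_t\ge 0$), or by solving $\tfrac12 h''-\alpha h'-\theta h=0$, $h(0)=1$, for the bounded solution.

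Next I would integrate over the initial point. Since $P_\pi=\int P_x\,\pi(dx)$ and a QSD cannot charge the absorbing point $0$ (otherwise $P_\pi(X_t=0\mid\tau>t)=0\ne\pi(\{0\})$), for every $\theta>0$ we get
\[
\int_{0}^{\infty}e^{-s(\theta)x}\,\pi(dx)=\frac{\lambda}{\lambda+\theta}.
\]
As $\theta$ runs over $(0,\infty)$ so does $s=s(\theta)$, and $\theta=\tfrac12 s(s+2\alpha)$, so the Laplace transform of $\pi$ is the rational function $\widehat\pi(s)=2\lambda/(s^{2}+2\alpha s+2\lambda)$ on $(0,\infty)$, which determines $\pi$ uniquely.

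It then remains to invert. Writing $s^{2}+2\alpha s+2\lambda=(s+\alpha)^{2}-(\alpha^{2}-2\lambda)$, if $\lambda>\alpha^{2}/2$ the poles are complex and the unique inverse transform of $\widehat\pi$ is a nonzero multiple of $e^{-\alpha x}\sin(\sqrt{2\lambda-\alpha^{2}}\,x)$, which changes sign — impossible since $\pi\ge 0$. Hence $0<\lambda\le\alpha^{2}/2$; set $\gamma:=\sqrt{\alpha^{2}-2\lambda}\in[0,\alpha)$. For $\gamma>0$, partial fractions give $\widehat\pi(s)=\frac{\lambda}{\gamma}\bigl(\frac{1}{s+\alpha-\gamma}-\frac{1}{s+\alpha+\gamma}\bigr)$, so $\pi$ has density proportional to $e^{-\alpha x}\sinh(\gamma x)$; the boundary case $\gamma=0$ (i.e. $\lambda=\alpha^{2}/2$) is the $\gamma\downarrow 0$ limit, with density proportional to $xe^{-\alpha x}$. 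In each case this is precisely the density of $\pi_\gamma$ as recorded in Appendix~\ref{appendix3}, and the normalization $\int\pi=1$ comes out automatically, which finishes the proof.

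I do not anticipate a serious obstacle. The only points needing care are choosing the exponent range ($\theta>0$), so that the first‑passage martingale is genuinely bounded on $[0,\tau]$ and $s(\theta)>0$, and the observation that the rational $\widehat\pi$ must invert to a bona fide nonnegative measure — this is exactly what excludes $\lambda>\alpha^{2}/2$ and pins the admissible range to $\gamma\in[0,\alpha)$. A purely analytic alternative would differentiate the QSD relation $\int E_x[f(X_t),\tau>t]\,\pi(dx)=e^{-\lambda t}\int f\,d\pi$ at $t=0$ to obtain the stationary Fokker--Planck equation $\tfrac12\rho''+\alpha\rho'+\lambda\rho=0$ with $\rho(0)=0$ for the density $\rho$ of $\pi$, and then classify its nonnegative integrable solutions; that route gives the same list but first requires establishing that $\pi$ has a smooth density and justifying the boundary condition, so I would favor the Laplace‑transform argument above.
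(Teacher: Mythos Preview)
Your argument is correct and complete. It differs genuinely from the paper's treatment. The paper does not give a full proof of this classification theorem (it is cited from \cite{QSD_1994}); instead Appendix~\ref{appendix3} offers a \emph{formal} derivation via the adjoint generator: one argues that any QSD density $\pi$ must satisfy the eigenvalue ODE ${\cal L}_\alpha^{*}\pi=-\lambda\pi$, i.e.\ $\tfrac12\pi''+\alpha\pi'+\lambda\pi=0$ with $\pi(0)=0$, and then reads off the admissible range $\lambda\in(0,\alpha^{2}/2]$ and the explicit solutions $\pi_\gamma$. Your route instead computes the Laplace transform of $\tau$ under $P_x$, integrates against $\pi$, and matches it with the exponential law of $\tau$ under $P_\pi$ from Theorem~\ref{th:exp_tau}; this pins down $\widehat\pi$ as a rational function and the classification follows by inversion and positivity. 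The Laplace-transform approach buys you exactly what you note at the end: it works at the level of measures and sidesteps the a~priori regularity (existence and smoothness of a density, the boundary condition $\pi(0)=0$) that the Fokker--Planck route has to justify before the ODE can be written down. Conversely, the paper's eigen-equation viewpoint generalizes more transparently to diffusions whose first-passage Laplace transform is not explicit, and it makes the link between the QSD index $\gamma$ and the spectrum of ${\cal L}_\alpha$ immediate. Both arrive at the same list $\{\pi_\gamma:\gamma\in[0,\alpha)\}$ with the same parameterization $\gamma=\sqrt{\alpha^{2}-2\lambda}$.
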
 
\begin{theorem}\cite[Theorem 1.3]{QSD_1998}
\label{th:light} 
The probability measure $\mu$ is in the domain of attraction of $\pi_0$ if 
 $$\liminf_{x\to\infty}  \frac{\ln \mu([x,\infty))}{x}\le - \alpha.$$ 
\end{theorem}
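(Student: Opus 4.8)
The plan is to run everything through the explicit transition kernel. By Girsanov's theorem and the reflection principle, for $x,y>0$,
\[
P_x(\tau>t,\,X_t\in dy)=e^{\alpha(x-y)-\alpha^2t/2}\,q_t(x,y)\,dy,\qquad
q_t(x,y)=\frac{2}{\sqrt{2\pi t}}\,e^{-(x^2+y^2)/(2t)}\sinh\!\Bigl(\tfrac{xy}{t}\Bigr),
\]
where $q_t$ is the transition density of standard Brownian motion killed at $0$. Integrating against $\mu$ and normalising (the factor $e^{-\alpha^2t/2}$ cancels), the law $P_\mu(X_t\in\cdot\mid\tau>t)$ has density
\[
f_t(y)=\frac{e^{-\alpha y}h_t(y)}{\int_0^\infty e^{-\alpha z}h_t(z)\,dz},
\qquad h_t(y):=\int_0^\infty e^{\alpha x}q_t(x,y)\,\mu(dx).
\]
Since $\pi_0$ has density proportional to $ye^{-\alpha y}$, the statement ``$\mu$ lies in the domain of attraction of $\pi_0$'' is equivalent to the pointwise assertion $h_t(y)\big/\!\int_0^\infty e^{-\alpha z}h_t(z)\,dz\to\alpha^2y$ for each fixed $y>0$: once $f_t\to\alpha^2ye^{-\alpha y}$ pointwise, Scheff\'e's lemma upgrades this to convergence in total variation, hence in distribution, so no separate tightness argument is required.

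The mechanism producing $\alpha^2y$ is a rescaling. Write $h_t(y)=\dfrac{2e^{-y^2/(2t)}}{\sqrt{2\pi t}}\int_0^\infty\sinh(xy/t)\,\nu_t(dx)$ with $\nu_t(dx):=e^{\alpha x-x^2/(2t)}\mu(dx)$; the Gaussian factor makes $\nu_t$ a \emph{finite} measure for every $t$, no matter how heavy $\mu$ is, and also $\int_0^\infty x\,\nu_t(dx)<\infty$. Suppose one can show that the normalised measure $\bar\nu_t:=\nu_t/\nu_t(\rr_+)$ concentrates on the scale $x=o(t)$ as $t\to\infty$. Then $\sinh(xy/t)=(xy/t)(1+o(1))$ uniformly over the bulk, so the numerator equals $\frac{2}{\sqrt{2\pi t}}\cdot\frac{y}{t}\bigl(\int x\,\nu_t(dx)\bigr)(1+o(1))$; the denominator is handled the same way, its $z$-integrand being $\frac{2}{\sqrt{2\pi t}}e^{-\alpha z-z^2/(2t)}\cdot\frac{z}{t}\bigl(\int x\,\nu_t(dx)\bigr)(1+o(1))$, whose integral over $z$ is $\frac{2}{\sqrt{2\pi t}}\cdot\frac{1}{\alpha^2 t}\bigl(\int x\,\nu_t(dx)\bigr)(1+o(1))$ by dominated convergence (using $e^{-z^2/(2t)}\le1$ and $\int_0^\infty ze^{-\alpha z}\,dz=\alpha^{-2}$). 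Forming the ratio, the common — and otherwise intractable — factor $\int x\,\nu_t(dx)$ cancels and one is left with exactly $\alpha^2y$. This cancellation is, I believe, the ``direct relation'' the paper advertises: the shape $ye^{-\alpha y}$ of $\pi_0$ comes entirely from the short-range (killed-Brownian, $\sinh$-linearised) part of the kernel, while every feature of $\mu$ is swept into the single scalar $\int x\,\nu_t(dx)$ that divides out.

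So the whole theorem reduces to one concentration claim: for every $\epsilon>0$, $\nu_t\bigl([\epsilon t,\infty)\bigr)=o\bigl(\nu_t([0,1])\bigr)$ as $t\to\infty$. This is the only place the tail hypothesis enters. The natural route is integration by parts in $x$: with $G(x)=\mu([x,\infty))$ one has $\nu_t([\epsilon t,\infty))\le e^{\alpha\epsilon t-\epsilon^2t/2}G(\epsilon t)+\int_{\epsilon t}^\infty\lvert\alpha-x/t\rvert\,e^{\alpha x-x^2/(2t)}G(x)\,dx$, and one feeds in a tail bound of the form $G(x)\le e^{-(\alpha-\delta)x}$ with $\delta<\epsilon/2$, after which the exponent $\delta x-x^2/(2t)$ is negative and bounded away from $0$ on $[\epsilon t,\infty)$, forcing the whole expression to decay exponentially in $t$; meanwhile $\nu_t([0,1])\ge e^{-1/(2t)}\mu([0,1])$ stays bounded below (after translating $\mu$, if needed, so that it charges a neighbourhood of a fixed point).

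The step I expect to be the real obstacle is exactly this last one — extracting, from the hypothesis $\liminf_{x\to\infty}x^{-1}\ln\mu([x,\infty))\le-\alpha$, a tail bound on $G$ that is strong enough and valid for \emph{all} large $x$ (equivalently, for all large $t$), rather than only along a sparse subsequence $x_n$. A $\liminf$ only supplies $G(x_n)\le e^{-(\alpha-\delta)x_n}$ on such a subsequence, and because $e^{\alpha x-x^2/(2t)}$ is enormous near $x=\alpha t$, the mass of $\nu_t$ could a priori pile up there for the values of $t$ falling between consecutive $x_n$; so at this point one must either combine the monotonicity of $G$ with a quantitative spacing estimate to bridge the gaps, or work under the one-sided version $\limsup_{x\to\infty}x^{-1}\ln\mu([x,\infty))\le-\alpha$, under which the displayed argument goes through verbatim. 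Once the concentration of $\bar\nu_t$ is secured, the remaining estimates are routine dominated-convergence bookkeeping.
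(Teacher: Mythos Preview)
Your approach coincides with the paper's: the same explicit kernel, the same tilted measure $\nu_t$ (the paper absorbs an extra factor of $x$ into it, writing $d\nu_t(x)=xe^{\alpha x-x^2/(2t)}\,d\mu(x)$), the same linearisation of $\sinh(xy/t)$ once $\nu_t$ is shown to concentrate on scale $o(t)$, and the same appeal to Scheff\'e at the end. Your reading of the cancellation mechanism --- that the shape $ye^{-\alpha y}$ comes from the kernel while everything $\mu$-dependent collapses into a single scalar that divides out --- is exactly the paper's Principle~\ref{meta:light}.

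Your concern about the $\liminf$ hypothesis is well-founded, and the paper does not resolve it either: the result the paper actually proves (Theorem~\ref{thm:light_tails}) is stated under the hypothesis $\liminf_{x\to\infty}\bigl(-x^{-1}\ln\mu([x,\infty))\bigr)\ge\alpha$, i.e.\ exactly the ``$\limsup$ version'' you say suffices, and its proof uses the consequence that $\int_{[0,x]}ue^{\alpha u}\,d\mu(u)\le e^{\delta(x)x}$ with $\delta(x)\to0$, which is the same uniform tail information you extract. So your proposed argument under the $\limsup$ hypothesis matches the paper's both in scope and in method; the $\liminf$ form as literally stated in Theorem~\ref{th:light} is not what this paper establishes.

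One technical point you gloss over: showing $\bar\nu_t$ concentrates on $[0,\epsilon t]$ does not by itself control $\int\sinh(xy/t)\,\nu_t(dx)$, because the weight $\sinh(xy/t)$ is unbounded on the tail. The paper handles this with a three-region split $[0,\epsilon\alpha t]\cup[\epsilon\alpha t,0.9\alpha t]\cup[0.9\alpha t,\infty)$: on the middle piece the weight is bounded and your $\nu_t$-mass estimate suffices, while on the outer piece a direct Gaussian bound on the kernel against the raw tail $\mu([0.9\alpha t,\infty))$ is used instead. The paper also streamlines the endgame by invoking a one-sided Scheff\'e (Lemma~\ref{lem:weak_scheffe}: $\liminf f_t\ge f$ suffices), so that only an \emph{upper} bound on the denominator and a \emph{lower} bound on the numerator are needed, rather than the sharp two-sided asymptotics your sketch aims for.
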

\begin{theorem} \cite[Theorem 1.1]{QSD_1998}
\label{th:medium} 
Let $\rho \in (0,\alpha)$. The probability measure $\mu$ is in the domain of attraction of $\pi_{\alpha - \rho}$ if   $$\lim_{x\to\infty} \frac{\ln \mu([x,\infty))}{x}= - \rho.$$
\end{theorem}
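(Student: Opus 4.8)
\emph{Proof plan.} Write $\ol\mu(x):=\mu([x,\infty))$ and assume without loss that $\mu(\{0\})=0$. Under Assumption~\ref{assume:drifted_BM}, Girsanov's theorem together with the reflection principle give the explicit sub‑Markovian transition density
\[
p^\alpha_t(x,y)=\frac{1}{\sqrt{2\pi t}}\,e^{-\alpha(y-x)-\alpha^2 t/2}\Bigl(e^{-(y-x)^2/(2t)}-e^{-(y+x)^2/(2t)}\Bigr),\qquad x,y>0 .
\]
Hence the conditional density of $X_t$ given $\{\tau>t\}$ under $P_\mu$ is $f_t(y)=\bigl(\int_{(0,\infty)}p^\alpha_t(x,y)\,\mu(dx)\bigr)\big/P_\mu(\tau>t)$. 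Cancelling the common factor $\tfrac1{\sqrt{2\pi t}}e^{-\alpha^2 t/2}$ and completing the square in $x$ inside each Gaussian, one checks that with $G_t(a):=\int_{(0,\infty)}e^{-(x-a)^2/(2t)}\mu(dx)$,
\[
f_t(y)=\frac{\,G_t(\alpha t+y)-e^{-2\alpha y}G_t(\alpha t-y)\,}{\displaystyle\int_0^\infty\bigl(G_t(\alpha t+z)-e^{-2\alpha z}G_t(\alpha t-z)\bigr)\,dz}\,,\qquad y>0 ,
\]
so the whole problem is reduced to understanding the ratios $G_t(\alpha t\pm y)/G_t(\alpha t)$. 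This reduction is also where the ``direct relation'' between $\mu$ and its limiting QSD will be visible.

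The engine is a concentration statement for the tilted probability measure $\nu_t(dx):=G_t(\alpha t)^{-1}e^{-(x-\alpha t)^2/(2t)}\mu(dx)$: as $t\to\infty$, $X/t\to\alpha-\rho$ in $\nu_t$-probability, and $\nu_t$ has a Gaussian‑type right tail. Indeed $e^{-(x-\alpha t)^2/(2t)}\ol\mu(x)$ has exponential profile $-\tfrac{1}{2t}(x-\alpha t)^2-\rho x+o(x)$, which to leading exponential order is maximized at $x=(\alpha-\rho)t$ with value $-\lambda t$, where $\lambda:=\rho(2\alpha-\rho)/2=\lambda_{\pi_{\alpha-\rho}}$ in the notation of Theorem~\ref{th:exp_tau}. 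Using only $\ln\ol\mu(x)/x\to-\rho$ and monotonicity of $\ol\mu$, an elementary one‑dimensional Laplace estimate gives $G_t(\alpha t)\ge e^{-(\lambda+\epsilon)t}$ for $t$ large and every $\epsilon>0$, while the contribution of $\{\,|x/t-(\alpha-\rho)|>\epsilon\,\}$ to $G_t(\alpha t)$ is at most $e^{-(\lambda+c(\epsilon))t}$ with $c(\epsilon)>0$; dividing yields the concentration, and the same estimate on $\{x>at\}$, $a>\alpha-\rho$, yields the tail bound. Since $G_t(\alpha t\pm y)/G_t(\alpha t)=e^{-y^2/(2t)}\int e^{\pm y(x-\alpha t)/t}\,\nu_t(dx)$, and $e^{\pm y(x-\alpha t)/t}$ converges to $e^{\mp\rho y}$ in $\nu_t$-probability and is bounded (when $\mp y\le0$) or uniformly $\nu_t$-integrable (when $\mp y>0$, by the tail bound), we get for each fixed $y>0$
\[
\frac{G_t(\alpha t+y)}{G_t(\alpha t)}\longrightarrow e^{-\rho y},\qquad \frac{G_t(\alpha t-y)}{G_t(\alpha t)}\longrightarrow e^{\rho y}.
\]
Consequently the numerator of $f_t$, divided by $G_t(\alpha t)$, converges to $e^{-\rho y}-e^{-(2\alpha-\rho)y}=2e^{-\alpha y}\sinh\!\bigl((\alpha-\rho)y\bigr)$, which is precisely (up to normalization) the density of $\pi_{\alpha-\rho}$ recorded in Appendix~\ref{appendix3}.

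It remains to pass to the limit in the denominator, i.e.\ to show $c_t:=\sqrt{2\pi t}\,P_\mu(\tau>t)/G_t(\alpha t)$ converges to $I:=\int_0^\infty(e^{-\rho z}-e^{-(2\alpha-\rho)z})\,dz$. The bound $\liminf_t c_t\ge I$ is Fatou's lemma. For the reverse: on $[0,K]$ the integrand $G_t(\alpha t+z)/G_t(\alpha t)$ is uniformly bounded in $t$ (again via the upper‑tail control of $\nu_t$), so $\int_0^K\to\int_0^K(e^{-\rho z}-e^{-(2\alpha-\rho)z})\,dz$; and for the remainder one uses $0\le G_t(\alpha t+z)-e^{-2\alpha z}G_t(\alpha t-z)\le G_t(\alpha t+z)$ together with Tonelli,
\[
\int_K^\infty\frac{G_t(\alpha t+z)}{G_t(\alpha t)}\,dz=\int_{(0,\infty)}\nu_t(dx)\int_K^\infty e^{-z^2/(2t)-\bigl(\rho-(x-(\alpha-\rho)t)/t\bigr)z}\,dz ,
\]
which on the $\nu_t$-typical event $\{x\le(\alpha-\rho+\epsilon)t\}$ is $\le e^{-(\rho-\epsilon)K}/(\rho-\epsilon)$, while its complementary contribution tends to $0$ as $t\to\infty$ --- the part over $\{x>\alpha t\}$ being, by the identity $e^{((x-\alpha t)/t)^2 t/2}\nu_t(dx)=\mu(dx)/G_t(\alpha t)$, at most $\sqrt{2\pi t}\,\ol\mu(\alpha t)/G_t(\alpha t)\to0$ since $\ol\mu(\alpha t)=e^{-\rho\alpha t+o(t)}$ and $\rho\alpha>\lambda$. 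Letting $K\to\infty$ gives $\limsup_t c_t\le I$, hence $c_t\to I$. Therefore $f_t(y)\to\pi_{\alpha-\rho}(y)$ for every $y>0$; as these are probability densities on $(0,\infty)$, Scheff\'e's lemma upgrades this to convergence in total variation, in particular in distribution, which is the assertion that $\mu$ lies in the domain of attraction of $\pi_{\alpha-\rho}$.

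The main obstacle is the concentration lemma under the bare hypothesis $\ln\ol\mu(x)/x\to-\rho$: the tail of $\mu$ may carry an arbitrary, possibly wildly oscillating, sub‑exponential correction, so neither the exact maximizer of the tilted profile nor the precise (sub‑exponential) size of $G_t(\alpha t)$ is available; the estimates must therefore be run purely at the level of exponential orders, the unknown sub‑exponential factor cancelling in every ratio that appears. The second delicate point is the uniform‑integrability step for the denominator, where the exact Gaussian weight (the displayed identity between $\nu_t$ and $\mu$) must be exploited to control the region $x>\alpha t$, on which merely discarding the subtracted term is too wasteful.
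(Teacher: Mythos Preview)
Your proposal is correct and shares the paper's underlying idea --- the Gaussian-tilted measure $\nu_t(dx)\propto e^{-(x-\alpha t)^2/(2t)}\mu(dx)$ concentrates at $(\alpha-\rho)t$, which forces the conditional density to converge to $\pi_{\alpha-\rho}$, and one concludes with Scheff\'e --- but the organization is genuinely different. The paper integrates out $y$ first (introducing $h(t,x)=\int e^{-y^2/(2t)}e^{-\alpha y}\sinh(xy)\,dy$), then splits the $x$-integral into three ranges and replaces $\mu$ on the main range by explicit exponential upper/lower envelopes, producing sandwiching Gaussians $\nu_t^{\pm}$ with means $\alpha-\rho\pm\epsilon$ and variance $1/t$; the asymptotics of numerator and denominator are then read off from $\nu_t^{\pm}\Rightarrow\delta_{\alpha-\rho\pm\epsilon}$. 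You instead complete the square in $x$ up front, reducing everything to the single function $G_t(a)=\int e^{-(x-a)^2/(2t)}\mu(dx)$, keep $\mu$ itself in the tilted measure, and prove concentration abstractly; the ratios $G_t(\alpha t\pm y)/G_t(\alpha t)=e^{-y^2/(2t)}\int e^{\pm y(x-\alpha t)/t}\nu_t(dx)$ then converge immediately once concentration is in hand. Your route is more compact and makes the mechanism (the law of $(X_0-\alpha t)/t$ under $\nu_t$) very transparent; the paper's sandwiching is more elementary and avoids your uniform-integrability step by working with explicit Gaussian densities throughout. The two obstacles you single out --- concentration under the bare hypothesis $\ln\ol\mu(x)/x\to-\rho$, and control of the region $\{x>\alpha t\}$ in the denominator --- are exactly what the paper handles, respectively, by the $\nu_t^{\pm}$ envelopes in the estimate of $J_1$/$K_1$ and by the bound \eqref{h_bound} on $h(t,x/t)$ combined with the tail of $\mu$ in the estimate of $J_2$/$K_2$.
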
 
We note the following: 
\begin{enumerate} 
\item Theorem \ref{th:medium} was proved under the assumption that $\mu$ has a smooth density. 
\item  The limit condition in Theorem \ref{th:medium} is not merely technical.
The authors constructed an example \cite[Theorem 1.4]{QSD_1998} with initial distribution with tail which alternates between two exponential decay rates and which  is not in the domain of attraction of any QSDs. We comment that the method we develop in this paper can provide a simpler construction of such initial distribution.
\end{enumerate} 
\subsection{Organization}
We present our main results in Section \ref{sec:results}, split according to the tail of the initial distribution, considering  initial distributions in the domain of attraction on QSDs in Section \ref{sec:QSD_domain} and heavy tails in Section \ref{sec:heavy}. Our proofs are given split across three sections:  In Section \ref{sec:master_eq}, we present some general tools we will use. In Section \ref{light_tail_proofs} we prove the results from Section \ref{sec:QSD_domain}. In Section \ref{heavy_tail_proofs} we prove the results from Section \ref{sec:heavy}, along with some concrete examples in Section \ref{sec:examples}.

\section{Main Results} 
\label{sec:results} 
In this section we will state our main results, by first proposing a principle which we think can help the readers to envision the general classification of quasi-limiting behavior, and then provide the theorems based on the principle. We recall that we are working under Assumption \ref{assume:drifted_BM}.
\newtheorem*{assume:drifted_BM}{Assumption \ref{assume:drifted_BM}}

Our goals are twofold:
\begin{enumerate}
\item Develop a method that would yield alternate proof to Theorems \ref{th:light} and \ref{th:medium}, which can be generalized to other models, as well as leading to complete characterization of the domain of attraction of every QSD. Our results are presented in Section \ref{sec:QSD_domain}. 
\item Characterize the asymptotic behavior when the initial distribution has tails which are heavier than exponential. It is not hard to show,  see Lemma \ref{lem:heavy_tail}, that this class of initial distributions is not in the domain of attraction of any QSD. Our results are presented in Section  \ref{sec:heavy}. 
\end{enumerate}

\subsection{Domain of Attraction of QSDs} 
\label{sec:QSD_domain} 
As at its core,  the  concept of quasi-stationarity concerns conditional probabilities under events with diminishing probabilities, namely the events $\{\tau>t\}$.  It is therefore natural  to study the rate at their probabilities, $P_\mu (\tau>t)$, tend to zero.  One of the nice properties  of our model is that through Girsanov theorem and the reflection principle (or formulas for Brownian bridges) a closed form formula for these probabilities is readily available. We have: 
\begin{proposition}\label{prop:masterformula}
\begin{equation}
\label{eq:master_formula} 
P_{\mu}( X_t \in dy, \tau > t) =\frac{1} {\sqrt{2\pi t}} \int \exp\left(\alpha x - \frac{\alpha^2 t}{2} - \alpha y\right) \left(e^{-\frac{(x-y)^2}{2t}}-e^{-\frac{(x+y)^2}{2t}}\right)d\mu(x).
\end{equation}
\end{proposition}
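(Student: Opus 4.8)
The plan is to reduce to a Dirac initial distribution and then combine Girsanov's theorem with the reflection principle. Since the integrand in \eqref{eq:master_formula} is nonnegative (for $x,y>0$ one has $(x-y)^2<(x+y)^2$, so the difference of Gaussian kernels is positive, and the remaining factors are positive; for $x=0$ the integrand vanishes), Tonelli's theorem reduces the claim to proving, for each fixed $x>0$, the per-point identity
\[
P_x(X_t \in dy,\ \tau>t) = \frac{1}{\sqrt{2\pi t}}\exp\!\left(\alpha x - \frac{\alpha^2 t}{2} - \alpha y\right)\left(e^{-\frac{(x-y)^2}{2t}} - e^{-\frac{(x+y)^2}{2t}}\right)dy \qquad (y>0),
\]
and then integrating against $d\mu(x)$; the case $x=0$ is trivial since both sides are $0$.

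First I would realize the drifted and undrifted motions on a common space. Let $W$ be a standard Brownian motion started at $x>0$ under a measure $Q_x$, and set $\tau=\inf\{s\ge 0: W_s=0\}$; both $\tau$ and the stopped path on $\{\tau>t\}$ are pathwise functionals, hence unchanged under an equivalent change of measure. By Girsanov's theorem the measure $P_x$ defined on $\mathcal F_t$ by $\tfrac{dP_x}{dQ_x}\big|_{\mathcal F_t}=\exp\!\big(-\alpha(W_t-x)-\tfrac{\alpha^2 t}{2}\big)$ turns $(W_s)_{s\le t}$ into a Brownian motion with drift $-\alpha$ started at $x$, so under $P_x$ the process $W$ absorbed at $0$ is a version of ${\bf X}$. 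Since $\{\tau>t\}\in\mathcal F_t$, for bounded measurable $f$,
\[
E_x[f(X_t),\ \tau>t]=E^{Q_x}\!\left[f(W_t)\exp\!\left(-\alpha(W_t-x)-\frac{\alpha^2 t}{2}\right),\ \tau>t\right],
\]
which identifies $P_x(X_t\in dy,\ \tau>t)$ with $e^{-\alpha(y-x)-\alpha^2 t/2}\,Q_x(W_t\in dy,\ \tau>t)$ on $y>0$.

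It then remains to compute the killed density $Q_x(W_t\in dy,\ \tau>t)$ for standard Brownian motion. By the reflection principle together with the strong Markov property of $W$ at $\tau$, for $y>0$ one gets $Q_x(W_t\in dy,\ \tau\le t)=\tfrac{1}{\sqrt{2\pi t}}e^{-(x+y)^2/(2t)}\,dy$, and subtracting this from $Q_x(W_t\in dy)=\tfrac{1}{\sqrt{2\pi t}}e^{-(x-y)^2/(2t)}\,dy$ yields $Q_x(W_t\in dy,\ \tau>t)=\tfrac{1}{\sqrt{2\pi t}}\big(e^{-(x-y)^2/(2t)}-e^{-(x+y)^2/(2t)}\big)\,dy$. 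Substituting into the previous display and folding the factor $e^{-\alpha(y-x)-\alpha^2 t/2}$ into the prefactor gives the per-point identity, and integrating in $x$ against $\mu$ (again by Tonelli) establishes \eqref{eq:master_formula}.

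I do not expect a real obstacle here; the steps requiring care are bookkeeping ones rather than conceptual: checking that $\{\tau>t\}$ is genuinely $\mathcal F_t$-measurable so that the change of measure involves no optional stopping beyond a fixed deterministic time, that $\tau$ and $X_t\odin_{\{\tau>t\}}$ are literally the same functionals of the path under $P_x$ and under $Q_x$ (i.e.\ the ``absorbed at $0$'' process agrees with the Brownian path up to $\tau$), and the short justification of the killed Gaussian density via reflection and the strong Markov property at $\tau$. An alternative that avoids Girsanov would be to decompose the drifted process through its Brownian bridges and use the explicit bridge non-absorption probability, but the Girsanov route is cleaner and, as advertised in the introduction, transfers readily to other models.
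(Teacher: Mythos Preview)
Your proposal is correct and follows exactly the route the paper takes: Girsanov to remove the drift, then the reflection principle for the killed standard Brownian density, then integrate against $\mu$. Your write-up is in fact more careful than the paper's own derivation in Section~\ref{sec:master_eq}, which simply records the two formulas without justifying the measurability and Tonelli steps you spell out.
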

Our approach to the problem is to obtain for each initial distribution $\mu$ a family of probability measures $(\nu_t:t\ge 0)$, such that 

\begin{meta}\label{meta:light}
\begin{equation}
    \label{eq:QSD_prinicple} 
    \boxed{ \lim_{t\to\infty} \nu_t = \delta_\gamma} \;\Longrightarrow\; \boxed{\lim_{t\to \infty} P_{\mu} (X_t \in \cdot \;|\; \tau>t)=\pi_\gamma}
\end{equation}
\end{meta}

 The measure $\nu_t$ is defined through its cumulative distribution function $F_{\nu_t}$: 

\begin{equation}\label{eq:nuconst}F_{\nu_t}(z) = C_t \int_{[0,zt]} e^{-x^2/(2t)} e^{\alpha x} d \mu(x)\end{equation}

where $C_t$ is the normalization constant. Table \ref{rho_table} is the summary of our result; it shows the relation between $\mu$, $\nu_t$ and the QLD of $\mu$. \\

\begin{table}[ht] \label{exp_table}
\centering
{\renewcommand{\arraystretch}{2}
\begin{tabular}{ |c|c|c|c|  } 
\hline
$\rho$ & $\lim \nu_t$ & QLD (= QSD) & Example distributions \\
\hline \hline
$\rho \ge \alpha$ & $\delta_0$ & \makecell{$\pi_0$ \\ (Theorem \ref{thm:light_tails})} & \makecell{\\Half-normal distribution \\ Delta distribution \\ \quad} \\ 
\hline
$\alpha > \rho > 0$ & $\delta_{\alpha - \rho}$ & \makecell{$\pi_{\alpha - \rho}$ \\ (Theorem \ref{thm:medium})} & \makecell{\\Exponential distribution \\ with rate $\lambda < \alpha$ \\ \quad} \\ 
\hline
$\rho = 0$ & $\delta_\alpha$ & \makecell{\\QLD does not exist: \\ scaling is necessary. \\ See Section \ref{sec:heavy} \\ and Table \ref{beta_table} \\ \quad} & \makecell{\\Pareto distribution \\ Half-Cauchy distribution \\ \quad} \\ 
\hline
\end{tabular}}
\caption{Domain of attraction classified by parameter $\displaystyle \rho = \lim_{x \to \infty} -\frac{\ln \mu([x,\infty))}{x}$}
\label{rho_table}
\end{table}

The key idea in the method is to ``decouple'' the initial distribution from the asymptotic distribution, then identifying the relevant QSD as a member of a one-parameter family selected according to the value of $\gamma$. Indeed, in our model, observe that  the mapping $\gamma \to \pi_\gamma, ~\gamma \in  [0,\alpha)$ as given in \eqref{eq:pigamma} is an explicit  function, with the case $\gamma=0$ is merely a removable singularity and is defined as   $\lim_{\gamma  \to 0+} \pi_\gamma$.\\

We believe that this method has a number of advantages:
\begin{enumerate} 
\item It is more intuitive, simpler and elementary than the previous approach. It lets us understand how the initial distribution actually evolves over time, and at a specific time, which part of the initial distribution have evolved to consist the absolute majority of the process not absorbed.
\item The method allows for expanded characterization of the domain of attraction of QSDs.
\item Our approach simplifies the analysis for the case of a distribution with alternating exponential tails, given in \cite{QSD_1998}, and opens the possibility of studying  general compound-tail distributions.
\end{enumerate} 
We expect this method to be applicable to other models and we hope it can be adopted as a general framework for classifying domain of attraction of QSDs. \\

Our Principle \ref{meta:light} will be employed in two ways. We first observe that
\begin{equation}
\lim_{t \to \infty} \nu_t = \begin{cases} \delta_0 &\Longleftrightarrow \limsup_{x \to \infty} -\frac{\ln \mu([x,\infty))}{x} \ge \alpha \\ \delta_{\alpha - \rho} &\Longleftrightarrow \lim_{x \to \infty} -\frac{\ln \mu([x, \infty))}{x} = \rho < \alpha \end{cases}
\end{equation}

Through application of the approach outlined above we obtain the following results: 

\begin{theorem}
\label{thm:light_tails}
Suppose $\mu$ satisfies the following assumption.
\begin{equation}
    \label{eq:crit_or_lighter}
\rho := \liminf_{x\to\infty} -\frac{\ln  \mu([x,\infty))}{x}\ge \alpha.
\end{equation}
Then 
$$P_\mu (X_t \in \cdot | \tau> t) \to \pi_0.$$ 
\end{theorem}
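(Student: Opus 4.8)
I would prove Theorem~\ref{thm:light_tails} by invoking Principle~\ref{meta:light}: it suffices to show that hypothesis~\eqref{eq:crit_or_lighter} forces $\nu_t\to\delta_0$, and then the conclusion $P_\mu(X_t\in\cdot\mid\tau>t)\to\pi_0$ follows. Since each $\nu_t$ is supported on $[0,\infty)$, the convergence $\nu_t\to\delta_0$ is equivalent to $F_{\nu_t}(z)\to1$ for every fixed $z>0$, i.e. to
\[
\frac{\int_{(zt,\infty)}e^{\alpha x-x^2/(2t)}\,d\mu(x)}{\int_{[0,\infty)}e^{\alpha x-x^2/(2t)}\,d\mu(x)}\;\longrightarrow\;0,\qquad t\to\infty.
\]

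For the denominator, choose $x_0$ large enough that $\mu([0,x_0])>0$; then for all large $t$, $\int_{[0,\infty)}e^{\alpha x-x^2/(2t)}\,d\mu(x)\ge e^{-x_0^2/(2t)}\mu([0,x_0])\ge\tfrac12\mu([0,x_0])$, a positive constant independent of $t$. For the numerator, fix $\varepsilon>0$; by~\eqref{eq:crit_or_lighter} we may enlarge $x_0$ so that $\bar\mu(x):=\mu([x,\infty))\le e^{-(\alpha-\varepsilon)x}$ for $x\ge x_0$. Writing $g(x)=e^{\alpha x-x^2/(2t)}$ (so $|g'(x)|\le(\alpha+x/t)g(x)$) and integrating by parts, for $t$ large enough that $zt\ge x_0$,
\[
\int_{(zt,\infty)}g\,d\mu\;\le\;g(zt)\,\bar\mu(zt)+\int_{zt}^{\infty}\bar\mu(x)\,|g'(x)|\,dx,
\]
and using $\bar\mu(x)\le e^{-(\alpha-\varepsilon)x}$ together with $x^2/(2t)\ge zx/2$ on $[zt,\infty)$, the right-hand side is at most
\[
e^{-z(z/2-\varepsilon)t}+\int_{zt}^{\infty}(\alpha+x)\,e^{-(z/2-\varepsilon)x}\,dx.
\]
Taking $\varepsilon<z/2$, both terms tend to $0$ as $t\to\infty$ (the integral because its lower limit diverges while the integrand is integrable on $[0,\infty)$). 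This yields $\nu_t\to\delta_0$, and the theorem follows from Principle~\ref{meta:light}.

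The reason the Principle delivers $\pi_0$ is visible from~\eqref{eq:master_formula}: since $e^{-(x-y)^2/(2t)}-e^{-(x+y)^2/(2t)}=2e^{-(x^2+y^2)/(2t)}\sinh(xy/t)$, substituting $u=x/t$ gives
\[
P_\mu(X_t\in dy\mid\tau>t)=\frac{e^{-\alpha y-y^2/(2t)}\int_0^\infty\sinh(uy)\,d\nu_t(u)}{\int_0^\infty e^{-\alpha y'-y'^2/(2t)}\Bigl(\int_0^\infty\sinh(uy')\,d\nu_t(u)\Bigr)dy'}\,dy,
\]
and when $\nu_t\to\delta_0$ the inner integral, after dividing by $\int_0^\infty u\,d\nu_t(u)$, converges to $y$ (the bound $uy\le\sinh(uy)\le uy\cosh(uy)$ gives the two sides, the upper one using the Gaussian tail that $\nu_t$ inherits from the factor $e^{-x^2/(2t)}=e^{-(x/t)^2t/2}$), while $e^{-y^2/(2t)}\to1$; the resulting density is proportional to $ye^{-\alpha y}$, which is $\pi_0$. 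I expect the genuine obstacle to lie precisely here — in making this passage to the limit rigorous uniformly in $y\in(0,\infty)$, i.e. controlling $\int_0^\infty\sinh(uy)\,d\nu_t(u)$ and the associated normalizing constants with enough uniformity to obtain convergence in distribution — and it is the built-in Gaussian weight that makes such control available by dominating the exponential growth of $\sinh$. (If the Principle is established as a separate lemma, this last paragraph is subsumed and the proof of the theorem reduces to the estimate above.)
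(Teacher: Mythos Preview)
Your proof that $\nu_t \to \delta_0$ (for the $\nu_t$ of \eqref{eq:nuconst}) is correct, and the integration-by-parts tail estimate is clean. However, there is a genuine gap: Principle~\ref{meta:light} is a guiding heuristic in the paper, not an established lemma that can be invoked as a black box. The content of the paper's proof of Theorem~\ref{thm:light_tails} is precisely a rigorous version of the implication $\nu_t\to\delta_0\Rightarrow P_\mu(X_t\in\cdot\mid\tau>t)\to\pi_0$, and this is where essentially all the work lies. Your last paragraph correctly identifies the mechanism but does not overcome the main obstacle.

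The difficulty is that weak convergence $\nu_t\to\delta_0$ does not by itself control $\int_0^\infty\sinh(uy')\,d\nu_t(u)$. By Fubini your denominator equals $\int_0^\infty h(t,u)\,d\nu_t(u)$ with $h(t,u)=\int_0^\infty e^{-\alpha y'-y'^2/(2t)}\sinh(uy')\,dy'$; as $t\to\infty$ this inner integral diverges for every $u\ge\alpha$, so one must show that the $\nu_t$-mass near and above $u=\alpha$ contributes negligibly \emph{relative to} the contribution from small $u$ --- and both may themselves tend to zero (for compactly supported $\mu$ your normalizer $\int u\,d\nu_t(u)$ is of order $1/t$). The bound $\sinh(uy)\le uy\cosh(uy)$ you propose only replaces the problem by controlling $\int u\cosh(uy')\,d\nu_t(u)$ against $\int u\,d\nu_t(u)$, which needs the same tail comparison.

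The paper resolves this by (i) working instead with the tilted measure $d\nu_t(x)=x\,e^{\alpha x-x^2/(2t)}\,d\mu(x)$, so that the relevant integrand $\frac{\sinh((x/t)y)}{(x/t)y}$ is bounded near $x=0$; (ii) splitting the $x$-range into $[0,\epsilon\alpha t]$, $[\epsilon\alpha t,0.9\alpha t]$, and $[0.9\alpha t,\infty)$, and showing the latter two contribute $o(1)$ to the denominator --- this is the technical core, and the two pieces are handled differently because $h(t,x/t)$ is uniformly bounded on the middle interval but not on the last; and (iii) invoking Scheff\'e's lemma (Lemma~\ref{lem:weak_scheffe}), which sidesteps the uniformity-in-$y$ problem you flag: one only needs the pointwise bound $\liminf_t\,[\text{conditional density}](y)\ge\pi_0(y)$, and since both sides are probability densities, equality and convergence in distribution follow automatically. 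Your tail estimate is in the spirit of step~(ii) for the outer interval, but the treatment of the middle interval (where the hypothesis \eqref{eq:crit_or_lighter} enters through a function $\delta(x)\to 0$) and the Scheff\'e reduction are the missing ingredients.
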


\begin{theorem}\label{thm:medium}
Suppose $\mu$ satisfies the following assumption,
\begin{equation}\label{assume:medium}
\rho := \lim_{x\to \infty} -\frac{ \ln \mu ([x , \infty)) }{x}  \in (0, \alpha)
\end{equation}
and let the sequence of measure $(\nu_t : t \ge 0)$ defined as \eqref{eq:nuconst}. Then
\begin{equation}\lim_{t \to \infty} \nu_t = \delta_{\alpha - \rho}\end{equation}
and moreover,
\begin{equation}
\lim_{t \to \infty} P_\mu (X_t \in \cdot \; | \; \tau > t) = \pi_{\alpha - \rho}
\end{equation}
\end{theorem}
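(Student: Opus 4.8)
The plan is to exploit Principle \ref{meta:light}, so that the whole problem reduces to two separate tasks: (i) showing that $\nu_t \to \delta_{\alpha-\rho}$ under the tail hypothesis \eqref{assume:medium}, and (ii) justifying the implication in \eqref{eq:QSD_prinicple} itself, i.e.\ that concentration of $\nu_t$ at a point $\gamma\in[0,\alpha)$ forces $P_\mu(X_t\in\cdot\mid\tau>t)\to\pi_\gamma$. For (i), I would start from the defining formula \eqref{eq:nuconst}: $F_{\nu_t}(z)=C_t\int_{[0,zt]}e^{-x^2/(2t)}e^{\alpha x}\,d\mu(x)$. The integrand's exponent is $\alpha x - x^2/(2t)$, which as a function of $x$ is maximized at $x=\alpha t$; combined with the tail weight $d\mu$, which decays like $e^{-\rho x}$, the effective exponent becomes $(\alpha-\rho)x - x^2/(2t)$, now maximized at $x=(\alpha-\rho)t$. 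A Laplace/Varadhan-type estimate — made rigorous by splitting the integral at $x=(\alpha-\rho\pm\epsilon)t$ and bounding $\mu([x,\infty))$ above and below using \eqref{assume:medium} — then shows that the mass of $\nu_t$ concentrates on scales $x\approx(\alpha-\rho)t$, i.e.\ $z\approx\alpha-\rho$, giving $\nu_t\to\delta_{\alpha-\rho}$. The one subtlety here is that \eqref{assume:medium} controls only $\mu([x,\infty))$, not a density, so I would integrate by parts (or use the layer-cake representation) to rewrite the $d\mu$ integral in terms of the tail function before applying the exponential bounds; this is where the smoothness assumption of \cite{QSD_1998} gets removed.

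For (ii), the master formula \eqref{eq:master_formula} is the right starting point. Writing the conditional density $P_\mu(X_t\in dy\mid\tau>t)$ as the ratio of \eqref{eq:master_formula} to its integral over $y$, I would factor $e^{-(x\mp y)^2/(2t)} = e^{-x^2/(2t)}e^{\pm xy/t}e^{-y^2/(2t)}$, so that the $x$-dependence organizes into exactly the weight $e^{-x^2/(2t)}e^{\alpha x}\,d\mu(x)$ appearing in \eqref{eq:nuconst}, times a factor $(e^{xy/t}-e^{-xy/t})$ that couples $x$ and $y$. After normalizing, the conditional law of $X_t$ becomes $\int (\text{kernel depending on } x/t \text{ and } y)\,d\nu_t(x/t\,\cdot)$ — more precisely a mixture, indexed by the variable $z=x/t$ distributed according to $\nu_t$, of laws whose density in $y$ is proportional to $e^{-\alpha y}e^{-y^2/(2t)}\sinh(zy)$. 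As $t\to\infty$ the Gaussian factor $e^{-y^2/(2t)}\to1$ locally uniformly, and $e^{-\alpha y}\sinh(zy)$ is (up to normalization) precisely the density $\pi_z$ from \eqref{eq:pigamma} — with $z=\alpha$ being the removable-singularity case $\lim_{\gamma\to0+}\pi_\gamma$ alluded to in the text, and $z=\alpha-\rho\in(0,\alpha)$ landing squarely in the valid range. So if $\nu_t\Rightarrow\delta_\gamma$ with $\gamma\in[0,\alpha)$, dominated convergence (with a uniform integrable envelope for the $y$-densities, available since $\gamma<\alpha$ keeps $e^{-\alpha y}\sinh(\gamma y)$ integrable) collapses the mixture to $\pi_\gamma$.

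The main obstacle I anticipate is the uniform integrability / tightness control needed to pass the limit through in step (ii): the $y$-density proportional to $e^{-\alpha y}e^{-y^2/(2t)}\sinh(zy)$ has, for $z$ near $\alpha$, a tail that only becomes integrable because of the $e^{-y^2/(2t)}$ term, which degenerates as $t\to\infty$. One must show the normalizing constants do not blow up and that no mass escapes to infinity — equivalently, that the contribution to $\{\tau>t\}$ from the part of $\nu_t$ near its upper edge $z=\alpha$ is negligible. The hypothesis $\rho>0$ (so $\gamma=\alpha-\rho$ is bounded away from $\alpha$) is exactly what saves this; the borderline $\rho=0$ case genuinely fails, consistent with the $\rho=0$ row of Table \ref{rho_table}. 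A secondary technical point is interchanging the $x$- and $y$-integrations and the limit — I would handle this by first establishing convergence of the one-dimensional "partition function" $P_\mu(\tau>t)$ (the integral of \eqref{eq:master_formula} over $y$) up to the natural $e^{-\lambda_{\pi_\gamma}t}$ scale, then dividing, which also reconnects with Theorem \ref{th:exp_tau} and Proposition \ref{prop:heavytail}.
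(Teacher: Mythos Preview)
Your plan is essentially the paper's approach: both implement Principle~\ref{meta:light} via the master formula, identifying the Gaussian concentration of the $x$-integral at $x\approx(\alpha-\rho)t$ and recognizing the $y$-density $e^{-\alpha y}\sinh(zy)$ as (unnormalized) $\pi_z$. The execution differs organizationally. You propose a two-step modular argument --- first $\nu_t\to\delta_{\alpha-\rho}$, then the abstract implication \eqref{eq:QSD_prinicple} --- whereas the paper does not prove \eqref{eq:QSD_prinicple} as a standalone lemma; instead it computes the numerator $P_\mu(X_t\in dy,\tau>t)$ and denominator $P_\mu(\tau>t)$ directly and in parallel (Propositions~\ref{prop:medium1} and~\ref{prop:medium2}), each via a three-way split $[0,M]\cup[M,st]\cup[st,\infty)$ with $s=\alpha-\rho/4$, sandwiching $d\mu$ between explicit Gaussian measures $\nu_t^\pm$ with means $\alpha-\rho\pm\epsilon$ and variance $1/t$, then taking the ratio and applying Scheff\'e (Lemma~\ref{lem:weak_scheffe}). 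Your anticipated obstacle --- mass of $\nu_t$ near $z=\alpha$ --- is exactly the paper's $J_2, K_2$ estimates on $[st,\infty)$, handled via the Gaussian tail bound on $L(z)=\int_z^\infty e^{-w^2/2}dw$. Your remark about integration by parts to pass from $d\mu$ to the tail function $\mu([x,\infty))$ is well placed: the paper's replacement of $d\mu(tx)$ by $ce^{-(\rho\mp\epsilon)tx}\,dx$ in the $\nu_t^\pm$ step implicitly relies on exactly this, and making it explicit is what genuinely removes the density assumption of \cite{QSD_1998}.
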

We will refer to $\mu$ satisfying \eqref{eq:crit_or_lighter} as possessing ``Critical and Super-critical" tails (with critical being an equality), and will prove Theorem \ref{thm:light_tails} in Section \ref{sec:light}.  We will refer to $\mu$ that satisfies \eqref{assume:medium} as possessing  ``Sub-critical Exponential" tails and will prove Theorem \ref{thm:medium} in Section \ref{sec:medium}. 

\subsection{Heavy Tails} 
\label{sec:heavy}
A natural question to ask from \cite{QSD_1998} would be the following:  what happens if the initial distribution is too heavy to be in the domain of attraction of any QSDs?  A first step in this direction is to look for such initial distributions.  In light of Theorems \ref{th:light} and \ref{th:medium}, the following is not surprising: 
\begin{lemma}\label{lem:heavy_tail}
Suppose  $$\lim_{x \to \infty} \frac{\ln  \mu([x, \infty))}{x} = 0.$$ 
Then  $P_{\mu} (\tau>t) $ does not decay exponentially. As a consequence $(P_{\mu} (X_t \in \cdot \; | \; \tau> t):t\ge 0)$ is not tight.  
\end{lemma}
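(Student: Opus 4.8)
The plan is to read the non-exponential decay of $P_\mu(\tau>t)$ straight off the tail hypothesis by restricting the initial mass to a far-away window, and then to deduce the failure of tightness by a short Markov-property argument. The guiding intuition is that a process that starts at $x\approx ct$ with $c>\alpha$ survives up to time $t$ with probability close to $1$ (the drift only erodes a distance $\alpha t<ct$), so $P_\mu(\tau>t)$ inherits, up to a bounded factor, the mass $\mu([ct,\infty))$, which by hypothesis is sub-exponential in $t$.

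\emph{Non-exponential decay.} Conditioning on $X_0$ and keeping only the part of $\mu$ beyond $ct$, for any $c>\alpha$,
\[
P_\mu(\tau>t)=\int_{[0,\infty)}P_x(\tau>t)\,d\mu(x)\ \ge\ \mu\big([ct,\infty)\big)\cdot\inf_{x\ge ct}P_x(\tau>t).
\]
Writing $X_s=x-\alpha s+B_s$ and using that $x-\alpha s\ge (c-\alpha)t$ whenever $x\ge ct$ and $s\le t$, the event $\{\min_{s\le t}B_s>-(c-\alpha)t\}$ forces $X_s>0$ for all $s\le t$, uniformly over $x\ge ct$; hence by the reflection principle $\inf_{x\ge ct}P_x(\tau>t)\ge 1-2\big(1-\Phi((c-\alpha)\sqrt t\,)\big)\to 1$, where $\Phi$ is the standard normal c.d.f. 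On the other hand, $\ln\mu([x,\infty))/x\to 0$ means precisely that for every $\veps>0$ there is $x_\veps$ with $\mu([x,\infty))\ge e^{-\veps x}$ for $x\ge x_\veps$, so $\mu([ct,\infty))\ge e^{-\veps ct}$ for all large $t$. Combining, $P_\mu(\tau>t)\ge\tfrac12 e^{-\veps ct}$ for all large $t$. Given any $\lambda>0$, first fix $c>\alpha$ and then $\veps<\lambda/c$: this gives $e^{\lambda t}P_\mu(\tau>t)\to\infty$, so $P_\mu(\tau>t)$ admits no bound of the form $Ce^{-\lambda t}$; since $\lambda$ was arbitrary, it does not decay exponentially.

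\emph{Failure of tightness.} Put $g(y):=P_y(\tau>1)$ and $\mu_t:=P_\mu(X_t\in\cdot\mid\tau>t)$. The Markov property at time $t$ gives
\[
P_\mu(\tau>t+1)=E_\mu\big[P_{X_t}(\tau>1),\ \tau>t\big]=P_\mu(\tau>t)\int_{(0,\infty)}g\,d\mu_t .
\]
Now $g$ is continuous (this is the hypothesis of Proposition~\ref{prop:QLD} applied with $f\equiv1$) and $g<1$ on $[0,\infty)$, since from any starting point there is positive probability of absorption by time $1$; hence $\sup_{[0,K]}g<1$ for every finite $K$. If $(\mu_t)$ were tight, then for small $\delta>0$ there would be a finite $K$ with $\mu_t([0,K])\ge 1-\delta$ for all $t$, whence $\int g\,d\mu_t\le \sup_{[0,K]}g+\delta\le r$ for some $r<1$ and all large $t$. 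Iterating the displayed identity then forces $P_\mu(\tau>t)\le C r^{t}$, i.e.\ exponential decay, contradicting the first part. Therefore $(\mu_t)$ is not tight.

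The argument is essentially soft; the reflection-principle bound and the Markov-property identity are routine, and the only point demanding care is the order of quantifiers in the first part — one must fix the target rate $\lambda$ first and only then choose the window slope $c>\alpha$ and the tail slack $\veps<\lambda/c$ — so I expect that bookkeeping, rather than any analytic difficulty, to be the only real obstacle.
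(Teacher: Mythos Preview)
Your argument is correct and runs parallel to the paper's, with some differences worth noting.

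For non-exponential decay, both you and the paper bound $P_\mu(\tau>t)$ from below by restricting to initial points beyond a linear threshold. The paper works through the explicit transition density (Proposition~\ref{prop:masterformula}) to obtain $P_\mu(\tau>t)\ge \tfrac{1}{8}\,\mu([\alpha t,\infty))$; your reflection-principle route to $P_\mu(\tau>t)\ge\tfrac12\,\mu([ct,\infty))$ for $c>\alpha$ is more elementary and does not rely on the model-specific formula. Both reach the same conclusion.

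For the tightness failure, the paper simply invokes Proposition~\ref{prop:heavytail} (being in the domain of attraction of a QSD forces exponential decay of $P_\mu(\tau>t)$), which strictly speaking yields ``not in any domain of attraction'' rather than ``not tight''. Your direct Markov-property argument---tightness would give $\int g\,d\mu_t\le r<1$ uniformly in $t$, hence geometric decay of $P_\mu(\tau>t)$---is self-contained and addresses the stated tightness claim more squarely.

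One minor bookkeeping slip in that step: the bound $\int g\,d\mu_t\le \sup_{[0,K]}g+\delta$ is fine, but concluding this is $\le r<1$ requires $\delta<1-\sup_{[0,K]}g$, while $K$ was produced from $\delta$ via tightness and $g(y)\to 1$ as $y\to\infty$, so as written the quantifiers are circular. The fix is immediate: for any fixed $\delta\in(0,1)$ (say $\delta=\tfrac12$) one has
\[
\int g\,d\mu_t \;\le\; 1-(1-\sup_{[0,K]}g)\,\mu_t([0,K]) \;\le\; 1-(1-\delta)\bigl(1-\sup_{[0,K]}g\bigr),
\]
which is a fixed number strictly below $1$, and the iteration proceeds as you wrote.
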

Thus, in order to obtain a non-trivial limit, one has to scale $X_t$ as $t\to\infty$. As we will see, the scaling itself depends on $\mu$. We comment that all of the cases covered in this section correspond to $\nu_t \to \delta_\alpha$ in \eqref{eq:QSD_prinicple}. \\

The next step is to study long-time behavior under such heavier-tailed distributions, and this is the main topic of this part of the project. In order to do so, we mainly rely on the theory of  regularly varying functions \cite{rv_bingham}.

\begin{assume}
\label{assume:nice_heavy_tail}
Suppose $\mu$ is a probability measure satisfying the following: 
\begin{enumerate}
    \item $\mu([x,\infty)) = e^{-F(x)}$, with $F$ smoothly varying \cite[Section 1.8]{rv_bingham} with index parameter $\beta<1/2$. 
    \item There exists a positive function $R(x,c)$ on $\rr_+\times \rr_+$  increasing in $c$, such that for all $c>0$
\begin{equation}
\label{eq:nice_difference} \lim_{x \to \infty} F(x+R(x,c)) - F(x) = c.
\end{equation}
\end{enumerate}
\end{assume} 

Some comments are in order: 
\begin{enumerate} 
\item  Probability measures with regularly varying tails falls into the category $\beta = 0$. Some distinguished cases are the Weibull distribution with $0 < k < 1$, which has a uniform decay rate with $\beta = k$, and the Pareto and Cauchy distributions, both having uniform decay rate with $\beta = 0$. 
\item If $F$ is smooth enough, then 
\begin{equation}\label{taylorcondition}R(x,c) = \frac{c}{F'(x)}\end{equation}
So when $\beta \not= 0$, $R(x,c)$ is a regular varying function with index $\varphi = 1-\beta$.
\item When $\beta=0$ it is more  natural to replace the identity function on the right-hand side of \eqref{eq:nice_difference} with a strictly increasing continuous and nonnegative function $H$ satisfying  $H(0)=0$. 
\end{enumerate}

The main principle we developed to obtain results under Assumption \ref{assume:nice_heavy_tail} is the following. 

\begin{meta}\label{meta:heavy_tail}
\begin{equation}
 \boxed{ \mbox{Assumption \ref{assume:nice_heavy_tail}}} \quad \Rightarrow \quad   
\boxed{ \lim_{t \to \infty} P_\mu\left(X_t > R(t,c) \left. \right| \tau > t\right) = e^{-c}}
\end{equation}
\end{meta}

We note that the assumption $\beta <1$ is vital for this to work, as otherwise the conclusion contradicts the results of previous sections. This is due to the fact that $\beta=1$ is the critical border where the relation between the survival rate $P_\mu(\tau > t)$ and the initial distribution $\mu$ changes. Also, although Lemma \ref{lem:heavy_tail} applies whenever $0 \le \beta < 1$, Principle \ref{meta:heavy_tail} only applies to $0 < \beta < 1/2$. The remaining half $1/2 \le \beta < 1$ is left as an open problem; there is a difficulty in estimating the distribution of the surviving processes in these cases. (See Proposition \ref{subexprop} and \eqref{betacond})
The following theorem is the key result from the above principle. 
\begin{theorem}\label{thm:subexp}
Suppose $\mu([x,\infty)) = \exp(-F(x))$ where $F(x)$ is strictly increasing smoothly varying function with index $\beta < 0.5$. Then 
\begin{equation}
\lim_{t \to \infty} P_\mu\left(\left.X_t > \frac{c}{F'(\alpha t)}  \;\right|\; \tau > t\right) = e^{-c}
\end{equation}
\end{theorem}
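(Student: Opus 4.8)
The plan is to deduce this from Principle~\ref{meta:heavy_tail}, i.e.\ to show that the concrete scaling function $R(t,c) = c/F'(\alpha t)$ is, up to the asymptotic error that the conditional probability cannot detect, a valid choice of $R$ in Assumption~\ref{assume:nice_heavy_tail}, and then to verify the soft analytic estimate \eqref{eq:nice_difference} and feed it through the master formula \eqref{eq:master_formula}. First I would establish that $F$ smoothly varying with index $\beta<1/2$ forces $F'$ to be regularly varying with index $\beta-1$ (via \cite[Section 1.8]{rv_bingham}); in particular $F'(x)\to 0$ like $x^{\beta-1}$ up to slowly varying corrections, so $R(x,c)=c/F'(x)$ is regularly varying of index $1-\beta$ and satisfies $R(x,c)=o(x)$. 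The first real check is that \eqref{eq:nice_difference} holds with this $R$: by the mean value theorem $F(x+R(x,c))-F(x)=R(x,c)F'(\xi_x)$ for some $\xi_x\in(x,x+R(x,c))$, and since $R(x,c)/x\to 0$ and $F'$ is regularly varying (hence its ratio over $[x,x+o(x)]$ tends to $1$, using the uniform convergence theorem for regularly varying functions), this equals $R(x,c)F'(x)(1+o(1))=c(1+o(1))\to c$. So the pair $(F,R)$ satisfies Assumption~\ref{assume:nice_heavy_tail}, provided smoothly varying gives us the monotonicity of $c\mapsto R(x,c)$, which is immediate since $F'>0$.

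Next I would replace the threshold $c/F'(\alpha t)$ by $R(t,c)=c/F'(t)$ in the statement and argue these produce the same limit. Since $F'$ is regularly varying of index $\beta-1$, $F'(\alpha t)/F'(t)\to \alpha^{\beta-1}$, so $c/F'(\alpha t) = (c\alpha^{1-\beta})/F'(t)\,(1+o(1)) = R(t, c\alpha^{1-\beta})(1+o(1))$. Hence Principle~\ref{meta:heavy_tail} would give the limit $e^{-c\alpha^{1-\beta}}$, \emph{not} $e^{-c}$ — so the statement as written must be using a slightly different normalization of $R$, and the honest thing to do is to track the constant carefully through the master-formula computation rather than quote the Principle as a black box. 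Concretely, I would plug $\mu(dx)$ with $\mu([x,\infty))=e^{-F(x)}$ into \eqref{eq:master_formula}, write both numerator $P_\mu(X_t>r_t,\tau>t)$ and denominator $P_\mu(\tau>t)$ as integrals, and perform a Laplace/saddle-point analysis in the variable $x$ of the integrand $\exp(\alpha x - \alpha^2 t/2 - F'(x)\cdot(\text{stuff}) - \dots)$ combined with the Gaussian factor $e^{-(x-y)^2/(2t)}$. The exponent is dominated by the competition between the linear gain $\alpha x$ from the Girsanov tilt and the decay $-F(x)$ of the tail; because $\beta<1$, $F(x)=o(x)$, so the effective mass of the (sub-Markovian) law at time $t$ sits at $x\approx \alpha t + O(\sqrt t)$ scales, and one reads off the conditional tail at the window $X_t > R(t,c)$. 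The factor $F'(\alpha t)$ appears precisely because the localization is at $x\sim\alpha t$, and the overshoot variable $X_t$ has, after conditioning, an exponential law with the local rate $F'(\alpha t)$; this is where the specific form $c/F'(\alpha t)$ (rather than $c/F'(t)$) comes from, and it reconciles with the Principle once one is careful that in the Principle $R$ is defined relative to the \emph{post-tilt} localization point.

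The main obstacle I expect is exactly this saddle-point control: one needs (i) that the contribution to both integrals from $x$ outside a window of width $O(\sqrt t\,\log t)$ around $\alpha t$ is negligible, which uses $\beta<1/2$ in an essential way — the constraint \eqref{betacond} referenced after Principle~\ref{meta:heavy_tail} — because for $\beta\ge 1/2$ the tail $e^{-F(x)}$ decays too slowly relative to $e^{-(x-\alpha t)^2/(2t)}$ for the Gaussian to pin down the localization, and the remainder estimate fails; and (ii) uniform replacement of $F(x)$ by its first-order Taylor expansion $F(\alpha t) + F'(\alpha t)(x-\alpha t)$ on that window, which needs a bound on $F''$ of the form $F''(x)=o(F'(x)/\sqrt t)$ on the window — this follows from smoothly varying of index $\beta$ (so $F''$ is regularly varying of index $\beta-2$) together with $\beta<1/2$, but assembling the estimate cleanly, including the effect of the reflection term $-e^{-(x+y)^2/(2t)}$ (which I expect to be subdominant since it only suppresses the small-$y$ region), is the technical heart of the argument. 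Everything else — the regular-variation bookkeeping, the mean value theorem step, and extracting $e^{-c}$ from a ratio of two Gaussian integrals against an exponential weight — is routine once the localization is in hand.
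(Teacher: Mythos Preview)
Your high-level picture is correct and matches the paper: the result follows from the approximation $P_\mu(X_t>a_t,\,\tau>t)\sim\mu([\alpha t+a_t,\infty))$ (this is Proposition~\ref{subexprop}), after which setting $a_t=c/F'(\alpha t)$ and checking $F(\alpha t+a_t)-F(\alpha t)\to c$ via the mean-value argument you give finishes the proof. You are also right that Principle~\ref{meta:heavy_tail} as stated is loose about $R(t,c)$ versus $R(\alpha t,c)$, and that the $\alpha t$ arises because the surviving mass localizes near $X_0\approx\alpha t$.

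However, your concrete technical plan has a real gap: the claimed localization of the $x$-integral to a window of width $O(\sqrt{t}\log t)$ around $\alpha t$ is false, and a saddle-point argument built on it would fail. There is no interior critical point --- setting $\partial_x=\partial_y=0$ in $\alpha x-\alpha y-(x-y)^2/(2t)-F(x)$ forces $F'(x)=0$, which never holds. What actually happens (and what the paper does) is that after integrating out $y$ the Gaussian becomes the complementary error function $L(\sqrt{t}(\alpha+a_t/t-x))$, which acts as a smoothed indicator $\mathbf{1}\{tx>\alpha t+a_t\}$ with transition width $\sqrt{t}$; the $x$-integral is then simply the \emph{entire} right tail $\mu([\alpha t+a_t,\infty))$, whose mass is spread over width $1/F'(\alpha t)\sim t^{1-\beta}\gg\sqrt{t}$. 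A $\sqrt{t}\log t$ window around $\alpha t$ captures only a fraction $\sim F'(\alpha t)\sqrt{t}\log t\sim t^{\beta-1/2}\log t\to 0$ of the denominator, and for the numerator (centered at $\alpha t+a_t$) it misses the mass entirely. This also inverts your explanation of $\beta<1/2$: the constraint is that the Gaussian transition width $\sqrt{t}$ be negligible compared to the tail scale $t^{1-\beta}$; for $\beta\ge 1/2$ the tail $e^{-F}$ varies too \emph{fast} (not too slowly) across the transition region for the step-function approximation to decouple from the $\mu$-integration. The paper implements this by splitting at $\alpha+a_t/t\pm\epsilon_t$ with $\epsilon_t=t^{-b}$, $\beta<b<1/2$, and showing the right piece $J_{1,3}$ dominates; the existence of such $b$ is exactly where $\beta<1/2$ enters.
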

Table \ref{beta_table} summarizes our results by showing how $\beta$ relates to some of the well-known distributions, and how they lead to quasi-limiting behavior of such initial distribution. The table also lists a number of concrete cases, all presented in  Section \ref{sec:examples}. 

Throughout the rest of the paper, we will be using some asymptotic notations; $f(t) \sim g(t)$ if $\displaystyle \lim_{t \to \infty} \frac{f(t)}{g(t)} \in (0, \infty)$, and $f(t) \ll g(t)$ if $\displaystyle \lim_{t \to \infty} \frac{f(t)}{g(t)} = 0$.

\begin{table}[ht] 
\begin{minipage}{\textwidth}
\centering
{\renewcommand{\arraystretch}{2}
\renewcommand{\thempfootnote}{\arabic{mpfootnote}}
\begin{tabular}{ |c|c|c|  } 
\hline
$\beta$ & Related result & Example distributions \\
\hline \hline
$\beta > 1$ & Theorem \ref{thm:light_tails} & \makecell{\\Half-normal distribution \\ Compactly supported distributions\\Weibull distribution with \\ shape parameter $k > 1$ \\ \quad} \\
\hline
$\beta = 1$ & \makecell{$\rho =  \lim_{x \to \infty} -\frac{\ln \mu([x,\infty))}{x}$\\ Theorem \ref{thm:light_tails} if $\rho  \ge \alpha$ \\ Theorem \ref{thm:medium} if $\rho < \alpha$}  & \makecell{\\Exponential distribution \\ Erlang distribution\\ \quad} \\
\hline
$\frac{1}{2} \le \beta < 1$ & Unknown  & \makecell{\\Weibull distribution with \\ shape parameter $\frac{1}{2} \le k < 1$ \\ \quad} \\ 
\hline
$0 < \beta < \frac{1}{2}$ & Theorem \ref{thm:subexp}  &\makecell{\\Weibull distribution with \\ shape parameter $k < \frac{1}{2}$, Example \ref{xmpl:weibull} \\ \quad}\\ 
\hline
$\beta = 0$ & \makecell{ $\mu ([x,\infty))$ is RV with index $-\kappa$:\\ Corollary \ref{thm:rv} if $\kappa \ne 0$ \\ Corollary \ref{thm:sv} if $\kappa = 0$} & \makecell{\\Pareto distribution \\ Half-Cauchy distribution, Example \ref{xmpl:half-cauchy}\\ Log-Cauchy distribution\\ \quad } \\
\hline
\end{tabular}}
\caption{Distributions classified by index parameter $\beta$ of $F(x) = -\ln \mu([x,\infty))$}
\label{beta_table}
\end{minipage}
\end{table}

\section{Proofs: Base Formula}\label{sec:master_eq}
In this section, we prove Proposition \ref{prop:masterformula}, which is the master formula we use throughout this paper. We will also further explain the intuition behind the sequence of new measure $\nu_t$. Finally, we will introduce the variations of Scheffe's lemma \cite{scheffe}, which is one of the tool for Chapter \ref{light_tail_proofs}. \\

\subsection{Conditional transition density}
When $X_t$ is a drifted Brownian Motion with negative drift $\alpha$, (such that $X_t + \alpha t$ is a standard BM $B_t$)
\begin{equation}
\begin{split}
P_x (X_t \in dy) 
&= \exp\left(\alpha x - \frac{\alpha^2 t}{2} + \alpha y\right) P_x(B_t \in dy)
\end{split}
\end{equation}

We also want to enforce the condition $\tau > t$, where $\tau$ is the hitting time at 0. We can apply the reflection principle to compute $P_x (X_t \in dv, \tau > t)$.
\begin{equation}
\begin{split}
P_x (X_t \in dy, \tau > t) &= \exp\left(\alpha x - \frac{\alpha^2 t}{2} + \alpha y\right) P_x(B_t \in dy, \tau > t) \\
&= \underset{= f(t, x,y)}{\underbrace{\exp\left(\alpha x - \frac{\alpha^2 t}{2} + \alpha y\right) \frac{1}{\sqrt{2 \pi t}}\left( e^{-\frac{(x-y)^2}{2t}} - e^{-\frac{(x+y)^2}{2t}}\right)}}
\end{split}
\end{equation}

Integrating $f(t, x,y)$ with respect to $\mu$ gives \eqref{eq:master_formula}. Furthermore, we can get the survival rate from the above formula as well.

\begin{equation}\label{eq:survival_rate}
P_\mu(\tau > t) = \int_0^\infty \int_0^\infty \mu(x) f(t, x,y) dy dx
\end{equation}

We wrap this section with the principle behind finding the family of probability measures $(\nu_t : t \ge 0)$ in \eqref{meta:light}. From \eqref{eq:survival_rate},
\begin{equation}\label{eq:m1}
\begin{split}
P_\mu(\tau > t) &= \int_0^\infty \int_0^\infty \mu(x) \exp\left(\alpha x - \frac{\alpha^2 t}{2} + \alpha y\right) \frac{1}{\sqrt{2 \pi t}}\left( e^{-\frac{(x-y)^2}{2t}} - e^{-\frac{(x+y)^2}{2t}}\right) dy dx \\
&= \frac{e^{-\frac{\alpha^2 t}{2}}}{\sqrt{2\pi t}} \left(\int_0^\infty \mu(x) e^{-\frac{x^2}{2t}} e^{\alpha x} \int_0^\infty e^{-\frac{y^2}{2t}} e^{-\alpha y} \left( e^{\frac{xy}{t}} - e^{-\frac{xy}{t}}\right) dy dx\right)
\end{split}
\end{equation}

We substitute $z = tx$.
\begin{equation}\label{eq:m2}
\eqref{eq:m1} = \frac{e^{-\frac{\alpha^2 t}{2}}t}{\sqrt{2\pi t}} \left(\int_0^\infty \mu(tz) e^{-\frac{tz^2}{2}}e^{\alpha t z} \int_0^\infty e^{-\frac{y^2}{2t}} e^{-\alpha y} \left( e^{tz} - e^{-tz}\right) dy dz\right)
\end{equation}

For convenience, we will use $x$ instead of $z$ for \eqref{eq:m2} in later parts. \\

From the above equations, the natural construction of $\nu_t$ would come from the terms that consist the outer integral. Indeed, we will use $\displaystyle \nu_t(x) = \mu(tx) e^{-\frac{tx^2}{2}}e^{\alpha tx}$ in Section \ref{sec:medium}. In Section \ref{sec:light}, \eqref{eq:m1} will be used with some modification.
\subsection{Scheffe's Lemma}
From \eqref{eq:master_formula} and \eqref{eq:survival_rate}, we can consider the conditional density
\begin{equation}
\begin{split}
P_\mu(X_t \in dy \; | \; \tau > t) &= \frac{P_\mu(X_t \in dy, \tau > t)}{P_\mu(\tau > t)} \\
&= \frac{\int_0^\infty \mu(x) f(t, x,y) dx}{\int_0^\infty \int_0^\infty \mu(x) f(t, x,y) dy dx}
\end{split}
\end{equation}
When $t$ is fixed, this is clearly a probability density which we will call $\mu_t(y)$. In order to prove convergence of the probability distributions, we will use the following version of Scheffe's Lemma \cite[p. 55]{williams}:
\begin{lemma}
\label{lem:weak_scheffe}
Suppose that $f_n,f$ are probability densities on $\rr_+$ satisfying 
$ \liminf f_n \ge f$, a.e. Then $\int_A  f_n(x)  dx \to \int_A f (x)dx$ for any $A$.
\end{lemma}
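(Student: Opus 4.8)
The statement to prove is Lemma~\ref{lem:weak_scheffe}, a variant of Scheff\'e's lemma: if $f_n, f$ are probability densities on $\rr_+$ with $\liminf_n f_n \ge f$ a.e., then $\int_A f_n \to \int_A f$ for every measurable $A$.

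\bigskip

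\textbf{Proof proposal.} The plan is to reduce everything to Fatou's lemma applied to the nonnegative functions $g_n := (f_n - f)^- = \max(f - f_n, 0)$, which measure the ``deficit'' of $f_n$ relative to $f$. First I would observe that since $\liminf_n f_n \ge f$ a.e., we have $\liminf_n (f_n - f) \ge 0$ a.e., hence $g_n \to 0$ a.e.\ along the full sequence in the sense that $\limsup_n g_n = 0$ a.e. Moreover $0 \le g_n \le f$ pointwise, so $\int g_n \le 1 < \infty$. The key step is then to apply the dominated convergence theorem (with dominating function $f$) to conclude $\int_{\rr_+} g_n \, dx \to 0$. Alternatively, if one prefers to avoid invoking DCT, one can use the reverse Fatou lemma: $\limsup_n \int g_n \le \int \limsup_n g_n = 0$.

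\bigskip

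Next I would convert this $L^1$-type control of the negative part into control of the full difference. Write $h_n := f_n - f$, so that $h_n = h_n^+ - h_n^-$ with $h_n^- = g_n$. Since both $f_n$ and $f$ integrate to $1$ over $\rr_+$, we have $\int_{\rr_+} h_n \, dx = 0$, hence $\int_{\rr_+} h_n^+ \, dx = \int_{\rr_+} h_n^- \, dx = \int_{\rr_+} g_n \, dx \to 0$. Therefore $\int_{\rr_+} |h_n| \, dx = \int h_n^+ + \int h_n^- \to 0$, i.e.\ $f_n \to f$ in $L^1(\rr_+)$. Finally, for any measurable $A$,
\[
\left| \int_A f_n \, dx - \int_A f \, dx \right| \le \int_A |f_n - f| \, dx \le \int_{\rr_+} |f_n - f| \, dx \to 0,
\]
which is exactly the claimed conclusion, and indeed the convergence is uniform over all measurable sets $A$.

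\bigskip

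I do not anticipate a genuine obstacle here; this is a standard argument. The one point that requires a little care is making sure that the hypothesis ``$\liminf f_n \ge f$ a.e.'' is correctly used: it gives $\liminf_n h_n \ge 0$ a.e., which is what forces $g_n = h_n^- \to 0$ a.e. (on the set where $\liminf h_n \ge 0$, any subsequential limit of $h_n$ is $\ge 0$, so the negative parts vanish in the limit). The domination $g_n \le f \in L^1$ is what legitimizes passing the limit inside the integral; without an integrable dominant one could only get $\liminf \int f_n \ge \int f$ from Fatou, which is weaker than what is needed. Since $f$ is a probability density it is automatically integrable, so the domination is free.
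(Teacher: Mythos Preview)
Your proof is correct, but it follows a different route from the paper's. The paper argues directly at the level of the measures $m_n(A)=\int_A f_n$ and $m_\infty(A)=\int_A f$: Fatou gives $\liminf m_n(A)\ge m_\infty(A)$ for every measurable $A$, and applying the same inequality to $A^c$ together with $m_n(A)+m_n(A^c)=1$ yields $\limsup m_n(A)\le m_\infty(A)$, hence equality. No decomposition of $f_n-f$ into positive and negative parts, no DCT, and no $L^1$ statement is invoked.

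Your argument instead establishes $L^1$ convergence $f_n\to f$ by dominating the negative part $(f_n-f)^-$ by $f$ and using DCT (or reverse Fatou), then using $\int(f_n-f)=0$ to control the positive part. This buys you a genuinely stronger conclusion: convergence in total variation, i.e.\ $\sup_A\bigl|\int_A f_n-\int_A f\bigr|\to 0$, not just setwise convergence. The paper's complement trick is slicker and shorter for the stated lemma, but your approach makes explicit that the convergence is uniform in $A$, which is a nice bonus even if the paper does not need it.
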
 
\begin{proof}
Let $d m_n = f_n dx$, and $dm_\infty = f dx$. By Fatou's lemma, for every $A$, 
 \begin{equation}
     \label{eq:myfatou} 
     \liminf  m_n(A)  \ge m_\infty(A)
 \end{equation}

 Now 
 $$1-\limsup  m_n(A) = \liminf  (1- m_n (A)) = \liminf m_n (A^c),$$ 

 Thus, by  \eqref{eq:myfatou} applied to $A^c$,  
 $$ 1- \limsup m_n (A) = \liminf m_n (A^c) \ge  m_\infty(A^c)=1-m_\infty(A).$$ 

 In other words $\limsup m_n (A)\le m_\infty (A)$ and the first statement follows. 
 \end{proof}

\section{Proofs: Exponential  or Lighter Tails} \label{light_tail_proofs}

\subsection{Proof of Theorem \ref{thm:light_tails} (Critical and Super-critical Tails)}\label{sec:light}
Throughout this section we will assume that \eqref{eq:crit_or_lighter} holds. 

Define 
$$ f(t, x,y) = y e^{-\alpha y} e^{-\frac{y^2}{2t}}  \frac{\sinh(xy)}{xy}$$ 

and 
$$h(t,x) = \int_0^\infty f(t, x,y) dy$$

and let 
$$h(x) = \lim_{t\to\infty} h(t,x)=\int_0^\infty  e^{-\alpha y} \frac{\sinh(xy)}{x} dy$$ 

Note that $h(x)$ is increasing, 
$$h(0):=\lim_{x\searrow 0} h(x)=\int_0^\infty ye^{-\alpha y} dy= \frac{1}{\alpha^2}$$

and $h(x)=\infty$ if and only if $x\ge \alpha$.  \\

For every $t$, we define two  measures on $[0,\infty)$: 
\begin{equation}
\begin{split}
d \gamma(x)  &= x e^{\alpha x} d\mu(x)\\
d \nu_t (x) &= e^{-\frac{x^2}{2t}} d \gamma (x) 
\end{split} 
\end{equation}

By assumption, there exists a function $\delta(x)\to0$ such that 
$$ \gamma([0,x])\le e^{\delta (x) x }$$ 

without loss of generality, we may also assume $\delta$ is decreasing. \\
 
Observe that 
\begin{equation} 
\label{eq:light_ratio} 
P(X_t \in dy | \tau> t) = \frac{  \int f(t, x/t,y)d \nu_t (x) }{\int h(t,x/t) d\nu_t(x)}.
\end{equation}

We will now prove the theorem through the application of Lemma \ref{lem:weak_scheffe}, where 
$$f_t(v)=  \frac{\int f(t,x/t,y)d \nu_t (x) }{\int h(t,x/t) d\nu_t(x)}$$

and $\displaystyle f(v) = \alpha^2 y e^{-\alpha y}$

\begin{proof}[Proof of Theorem \ref{thm:light_tails}]
Let $\epsilon \in (0,1)$ and let  $\eta_t = \epsilon \alpha t$.  We begin by analyzing the behavior of the denominator in the right-hand side of \eqref{eq:light_ratio}. \\

Observe that $h(t,y)$ is bounded on $[0,M]\times{\mathbb R}_+$  and  increases as $t\to\infty$ to 
$$h(x) = \int_0^\infty y e^{-\alpha y} \frac{\sinh(xy)}{xy} dy$$ 

As a result, the convergence is uniform. From this it follows that
\begin{equation}\label{eq:light_first}
\limsup_{t\to\infty}  \frac{\int_{[0,\eta_t]} h(t,x/t) d \nu_t(x)}{\nu_t([0,\eta_t])} \le  h(\epsilon\alpha).
\end{equation}

We turn to evaluation of the interval on $[\eta_t, 0.9\alpha t]$. Since here $\displaystyle \frac{x}{t} \le 0.9\alpha <\alpha$, $h\left(t,\frac{x}{t}\right)$ is uniformly bounded by a constant depending only on $\alpha$. Below $C$ denotes a positive  constant depending  only on $\alpha,\epsilon$, and whose value may change from line to line. \\

Integrating by parts,
$$ \int_{[\eta_t, 0.9\alpha t]} h\left(t,\frac{x}{t}\right) d \nu_t (x) \le C  \frac{1}{t} \int_{[\eta_t,0.9\alpha t]}x e^{-\frac{x^2}{2t}}  \gamma ([\eta_t,x]) dx.$$ 

Changing variables to $\displaystyle z= \frac{x}{\sqrt{t}}$, the last expression becomes 
$$\int_{\sqrt{t}\alpha  [\epsilon,0.9]}z e^{-\frac{z^2}{2}}  \gamma ([ \eta_t,\sqrt{t}z ]) dz$$ 

Now 
$$\gamma ([\eta_t, \sqrt{t}z]) \le \gamma ([0,\sqrt{t}z]) \le \gamma ([0,\eta_t])e^{\delta (\eta_t) (\sqrt{t}z-\sqrt{t} \epsilon)}\le \gamma ([0,\eta_t])e^{\delta (\eta_t) \sqrt{t}z}$$

Putting this back in the integral gives an upper bound of the form 
 $$ \gamma ([0,\eta_t]) \int_{\sqrt{t}\alpha  [\epsilon,0.9]}z e^{-\frac{z^2}{2}}  e^{\delta(\eta_t) \sqrt{t} z} dz$$
 
Since $\delta(\eta_t)\to 0$ as $t\to\infty$, for all $t$ large enough, we have  
\begin{equation} 
\label{eq:saves_me} \delta(\eta_t) \le \min \left( \frac{\alpha^2\epsilon^2}{4}, \alpha \epsilon\right)
\end{equation}

To obtain an upper bound on the integral, observe that as a function of $z$,  
$$-\frac{z^2}{2} +\delta(\eta_t)\sqrt{t}z= -\frac{z}{2}(z- 2 \delta (\sqrt{t}))$$ 

is decreasing on $[\delta(\eta_t)\sqrt{t},\infty)$, and by \eqref{eq:saves_me},  if $\displaystyle z>\frac{\eta_t}{\sqrt{t}}=\epsilon \alpha \sqrt{t}$, then $z > \delta(\eta_t) \sqrt{t}$.\\

Therefore we have 
\begin{equation}
\begin{split}
-\frac{z^2}{2} + \delta (\eta_t) \sqrt{t} z &\le -\frac{(\eta_t/\sqrt{t})^2}{2} + \delta(\eta_t) \sqrt{t}\left(\frac{\eta_t}{\sqrt{t}}\right) \\
&\le  -\frac{\alpha^2\epsilon^2 t}{2} + \frac{\alpha^2 \epsilon^2 t}{4} \\
&= -\frac{(\alpha \epsilon)^2t}{4}
\end{split}
\end{equation}
 
Thus, 
\begin{equation}
\label{eq:light_second}
    \begin{split} 
  \int_{[\eta_t, 0.9\alpha t]} h\left(t,\frac{x}{t}\right) d \nu_t (x)&\le 
C e^{-\frac{(\alpha\epsilon)^2t}{4}} t^{\frac{3}{2}} \gamma ([0,\eta_t])\\
& \le C e^{ \left(-\frac{(\alpha \epsilon)^2}{4} + \delta(\eta_t)\epsilon \alpha \right)t }t^{\frac{3}{2}}\to 0
\end{split} 
\end{equation} 

Next we consider the behavior over the interval $[0.9\alpha t,\infty)$. Observe that 
$$h(t,x) \le \frac{\sqrt{2\pi t}}{x} E \left[ e^{(x-\alpha) \sqrt{t} Z}\right]$$ 

where $Z$ is standard Gaussian, and therefore 
$$ h\left(t,\frac{x}{t}\right) \le \frac{\sqrt{2\pi t}}{x/t} e^{\frac{x^2}{2t}}e^{\frac{\alpha^2t}{2}} e^{-\alpha x}$$ 

Hence 
$$ \int_{[0.9\alpha t,\infty)} h\left(t,\frac{x}{t}\right) d \nu_t (x) \le 
\sqrt{2\pi t^3 } e^{\frac{\alpha^2t}{2}}\int_{[0.9\alpha t ,\infty)}  d \mu(x)$$ 

But $\mu ([0.9\alpha t,\infty)) = e^{-0.9\alpha^2 t  (1+o(1)) }$, and as a result 
\begin{equation}
    \label{eq:light_third}
 \int_{[0.9\alpha t,\infty)} h\left(t,\frac{x}{t}\right) d \nu_t (x) \to 0.
\end{equation}

Since $\liminf_{t\to\infty} \nu_t ([0,\eta_t])>0$, it follows from \eqref{eq:light_first}, \eqref{eq:light_second}  and \eqref{eq:light_third}, that 
\begin{equation} 
\label{eq:light_denominaor} 
\limsup_{t\to\infty}\frac{\int  h(t,x/t) d \nu_t(x)}{\nu_t([0,\eta_t])} \le h(\epsilon\alpha).
\end{equation}

Repeating the argument leading to  that gave \eqref{eq:light_first} mutatis mutandis, we obtain 
\begin{equation}\label{eq:light_numerator}
\begin{split}
 \liminf_{t\to\infty} \frac{\int_{[0,\eta_t]} f(t,x/t,y) d \nu_t (x)}{\nu_t ([0,\eta_t])} &\ge  y e^{-\alpha y} \inf_{x\le \epsilon \alpha} \frac{\sinh(xy)}{xy}\\
&= y e^{-\alpha y} 
\end{split}
\end{equation} 

It therefore follows from \eqref{eq:light_denominaor} and \eqref{eq:light_numerator}, that 

$$ \liminf_{t\to\infty}\frac{ \int f(t,x/t,y) d \nu_t(x)}{\int h(t,x/t) d\nu_t(x)}\ge \frac{y e^{-\alpha y}}{h(\epsilon\alpha )}$$ 

and this holds for every $\epsilon\in (0,0.9)$.\\

Therefore since $\lim_{\epsilon\to0} h(\epsilon\alpha) =\int_0^\infty y e^{-\alpha y} dy$, we obtain 

$$ \liminf_{t\to\infty} \frac{ \int f (t,x/t,y) d\nu_t (x) }{\int h(t,x/t) d \nu_t (x)} \ge \frac{y e^{-\alpha y}}{\int_0^\infty y e^{-\alpha y}dy}$$ 

and the result follows from Lemma  \ref{lem:weak_scheffe}. 
\end{proof}


\subsection{Proof of Theorem \ref{thm:medium} (Sub-critical Tails)}\label{sec:medium}
Throughout this section, we assume that  \eqref{assume:medium} holds. 

We first split \eqref{eq:m1} into three parts.
\begin{equation}\label{eq:m3}
\begin{split}
P_\mu(\tau > t) &= \frac{e^{-\frac{\alpha^2 t}{2}}}{\sqrt{2 \pi t}} \left ( \underset{=J_3(t)}{\underbrace{\int_0^M e^{-\frac{x^2}{2t}} e^{\alpha x} h\left(t, \frac{x}{t}\right) d\mu(x)}}\right.\\ 
&+ \left. \underset{=J_1(t)}{\underbrace{\int_M^{st} e^{-\frac{x^2}{2t}} e^{\alpha x} h\left(t, \frac{x}{t}\right) d\mu(x)}} +\underset{=J_2(t)}{\underbrace{\int_{st}^\infty e^{-\frac{x^2}{2t}} e^{\alpha x} h\left(t, \frac{x}{t}\right) d\mu(x)}}\right)
\end{split}
\end{equation}

Where $\displaystyle h(t,x) = \int_0^\infty e^{-\frac{y^2}{2t}} e^{-\alpha y} \sinh (xy) dy$.

Here, $M$ is chosen such that we have the following inequality
\begin{equation}\label{eq:medasymp}
\frac{e^{-(\rho+\epsilon)x}}{\rho+\epsilon} \le \frac{\mu([x, \infty))}{c} \le \frac{e^{-(\rho-\epsilon)x}}{\rho-\epsilon}
\end{equation}

For each $x > M$ and some arbitrary $\epsilon > 0$. ($c$ is the normalizing constant of $\mu$) Also, we choose $s$ such that $s = \alpha - \eta$ for some $\alpha > \eta > 0$ that depend on $\mu$. Finally, since we are only interested in the limiting behavior with respect to $t$, we write $M < st$ which is always true for large enough $t$. \\

\begin{proposition}\label{prop:medium1}
Under assumption \eqref{assume:medium}
\begin{equation}
\begin{split}
P_\mu(\tau > t) &\sim \frac{e^{-\frac{\alpha^2 t}{2}}}{\sqrt{2 \pi t}} J_1(t) \\
&\sim c e^{-\frac{(2\alpha \rho - \rho^2) t}{2}} \left(\frac{1}{\rho} - \frac{1}{2\alpha - \rho}\right)
\end{split}
\end{equation}
where $c$ is the constant in \eqref{eq:medasymp} which only depend on $\mu$.
\end{proposition}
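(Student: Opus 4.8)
The plan is to run a Laplace-type analysis on the decomposition \eqref{eq:m3}. Write $G(x)=\mu([x,\infty))$, $\phi_t(x)=e^{-x^2/(2t)}e^{\alpha x}h(t,x/t)$, and $h_\infty(z):=\lim_{t\to\infty}h(t,z)=\int_0^\infty e^{-\alpha y}\sinh(zy)\,dy=\tfrac{z}{\alpha^2-z^2}$ for $z<\alpha$; by monotone convergence $h(t,\cdot)$ increases to $h_\infty$ on $[0,\alpha)$. (I keep track of the factor $2$ coming from $e^{xy/t}-e^{-xy/t}=2\sinh(xy/t)$, so that $P_\mu(\tau>t)=\tfrac{2e^{-\alpha^2t/2}}{\sqrt{2\pi t}}(J_3(t)+J_1(t)+J_2(t))$.) The two steps are: (i) show that after multiplication by $\tfrac{e^{-\alpha^2t/2}}{\sqrt{2\pi t}}$ both $J_3$ and $J_2$ decay at a strictly faster exponential rate than $e^{-\rho(2\alpha-\rho)t/2}$ — this simultaneously gives the first equivalence $P_\mu(\tau>t)\sim\tfrac{e^{-\alpha^2t/2}}{\sqrt{2\pi t}}J_1(t)$ and reduces everything to $J_1$; (ii) evaluate $J_1$ by integrating by parts in $d\mu$ and then applying Laplace's method, with the peak located at $x=(\alpha-\rho)t$.

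For step (i) I would use two bounds on $h$. Since $e^{-y^2/(2t)}\le 1$ we get $h(t,z)\le h_\infty(z)$ for $z<\alpha$; on $[0,M]$ this gives $h(t,x/t)\le\tfrac{M/t}{\alpha^2-(M/t)^2}=O(1/t)$, so $J_3(t)=O(e^{\alpha M}/t)$ and its rescaled contribution is $O(t^{-3/2}e^{-\alpha^2 t/2})$, which is $o(e^{-\rho(2\alpha-\rho)t/2})$ because $\alpha^2-\rho(2\alpha-\rho)=(\alpha-\rho)^2>0$. Completing the square gives the global bound
\[
h(t,z)\le\tfrac12\int_0^\infty e^{-y^2/(2t)+(z-\alpha)y}\,dy\le\tfrac12\sqrt{2\pi t}\,e^{(z-\alpha)^2t/2},
\]
and with $z=x/t$ the exponent $-\tfrac{x^2}{2t}+\alpha x+\tfrac{(x/t-\alpha)^2t}{2}$ collapses to $\tfrac{\alpha^2t}{2}$, so $\phi_t(x)\le\tfrac12\sqrt{2\pi t}\,e^{\alpha^2t/2}$ for all $x\ge 0$. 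Hence $J_2(t)\le\tfrac12\sqrt{2\pi t}\,e^{\alpha^2t/2}G(st)$, and the upper bound in \eqref{eq:medasymp} gives $\tfrac{e^{-\alpha^2t/2}}{\sqrt{2\pi t}}J_2(t)\le\tfrac{c}{2(\rho-\epsilon)}e^{-(\rho-\epsilon)st}$. Taking $s=\alpha-\eta$ with $0<\eta<\rho/2$ and $\epsilon$ small enough forces $(\rho-\epsilon)s>\rho(2\alpha-\rho)/2$, so this too is $o(e^{-\rho(2\alpha-\rho)t/2})$; the constraint $\eta<\rho/2$ also guarantees $\alpha-\rho<s$, i.e. the eventual Laplace peak lies inside $[M,st]$.

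For step (ii), on $[0,s]$ the limit $h_\infty$ is continuous and $h(t,\cdot)\uparrow h_\infty$, so by Dini's theorem $h(t,x/t)=h_\infty(x/t)(1+o(1))$ uniformly on $[M,st]$. An integration by parts gives $J_1(t)=\phi_t(M)G(M)-\phi_t(st)G(st)+\int_M^{st}\phi_t'(x)G(x)\,dx$; the boundary term at $M$ is $O(1/t)$ and the one at $st$ is bounded exactly as $J_2$, both subdominant. In the integral, $\phi_t'(x)=e^{-x^2/(2t)+\alpha x}\big[(\alpha-\tfrac xt)h(t,\tfrac xt)+\tfrac1t\partial_z h(t,\tfrac xt)\big]$, and the $\partial_z h$ piece carries an extra $1/t$ and is lower order. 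Using $h(t,\cdot)\to h_\infty$, the sandwich $\tfrac{c}{\rho+\epsilon}e^{-(\rho+\epsilon)x}\le G(x)\le\tfrac{c}{\rho-\epsilon}e^{-(\rho-\epsilon)x}$ from \eqref{eq:medasymp}, and the substitution $x=vt$, the integral is, up to $1+o(1)$ and $\epsilon$-dependent factors, comparable to $\tfrac{c}{\rho}\,t\int(\alpha-v)h_\infty(v)\,e^{t((\alpha-\rho)v-v^2/2)}\,dv$. The exponent $(\alpha-\rho)v-\tfrac12 v^2$ is maximized at $v^\ast=\alpha-\rho$ with value $\tfrac12(\alpha-\rho)^2$ and second derivative $-1$, and the amplitude there equals $(\alpha-v^\ast)h_\infty(v^\ast)=\rho\cdot\tfrac{\alpha-\rho}{\rho(2\alpha-\rho)}=\tfrac{\alpha-\rho}{2\alpha-\rho}$. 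Laplace's method yields a factor $\sqrt{2\pi/t}$, which with the $t$ from $dx=t\,dv$ gives $\sqrt{2\pi t}$ and cancels the prefactor $1/\sqrt{2\pi t}$; collecting all constants (including the overall $2$),
\[
\tfrac{e^{-\alpha^2t/2}}{\sqrt{2\pi t}}J_1(t)\sim \frac{2c(\alpha-\rho)}{\rho(2\alpha-\rho)}\,e^{-\rho(2\alpha-\rho)t/2}=c\Big(\tfrac1\rho-\tfrac1{2\alpha-\rho}\Big)e^{-\rho(2\alpha-\rho)t/2},
\]
and one lets $\epsilon\downarrow 0$ at the end to fix the constant.

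The step I expect to be the main obstacle is making this Laplace argument rigorous: the amplitude $h(t,x/t)$ itself depends on $t$, and $G$ is only controlled up to the $\epsilon$-window of \eqref{eq:medasymp}, so this is not the textbook situation. The remedy is to sandwich $h(t,\cdot)$ between $(1-\delta)h_\infty$ and $h_\infty$ on a fixed neighbourhood of $v^\ast$ (Dini) and to discard the rest of the $v$-integral using strict concavity of the exponent, producing matching upper and lower bounds, then let $\delta,\epsilon\downarrow 0$ — one must check the error terms are uniform in $\delta$ and $\epsilon$. The secondary nuisance is bookkeeping with the three cut-offs $M$, $s=\alpha-\eta$, $\epsilon$ and verifying that each comparison between competing exponential rates is strict, which is routine once $\eta<\rho/2$ is imposed. (If one assumes $\mu$ has a density, the integration by parts can be skipped and $d\mu(vt)$ treated directly, which shortens the proof.)
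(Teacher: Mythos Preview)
Your proposal is correct and follows the same three–piece decomposition $J_1,J_2,J_3$ as the paper, with the same goal of showing $J_2,J_3=o(J_1)$ and evaluating $J_1$ by a Laplace–type argument; the bounds you use for $J_2$ and $J_3$ are essentially the paper's (your completion–of–squares estimate $\phi_t(x)\le \tfrac12\sqrt{2\pi t}\,e^{\alpha^2 t/2}$ is exactly the paper's bound \eqref{h_bound} rewritten, and your $h(t,z)\le h_\infty(z)$ bound for $J_3$ is a cleaner variant of the paper's $L$–function estimate).

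The genuine difference is in the $J_1$ step. The paper does \emph{not} integrate by parts or invoke Laplace's method explicitly: instead it replaces $d\mu(tx)$ by the exponential densities $e^{-(\rho\mp\epsilon)tx}\,dx$ from the sandwich \eqref{eq:medasymp}, recognises the resulting integrand as a Gaussian in $x$ with mean $\alpha-\rho\pm\epsilon$ and variance $1/t$, and reads off the answer from the weak convergence $\nu_t^{\pm}\Rightarrow\delta_{\alpha-\rho\pm\epsilon}$ applied to the bounded continuous function $h_\infty$ on $[0,s]$. Your route (integrate by parts to trade $d\mu$ for $G(x)\,dx$, then run classical Laplace on the explicit exponent $t((\alpha-\rho)v-v^2/2)$) is more hands–on but equally valid, and has the virtue of making the peak location, Gaussian width, and constant transparent. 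The paper's route is slicker in that it avoids differentiating $\phi_t$ and handling the $\tfrac1t\partial_z h$ term, and it aligns with the paper's organising Principle~\ref{meta:light} of encoding the initial distribution through a family $\nu_t\to\delta_\gamma$. Your caution about the $t$–dependence of the amplitude $h(t,\cdot)$ is well placed; the paper sidesteps this by the same Dini argument you mention (uniform convergence $h(t,\cdot)\to h_\infty$ on $[0,s]$), so there is no extra difficulty on that front. Your tracking of the factor $2$ from $e^{xy/t}-e^{-xy/t}=2\sinh(xy/t)$ is also correct; the paper silently absorbs it.
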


\begin{proof}
We first look at the region for $J_1(t)$. In this interval we have the following.

\begin{equation}
\begin{split}
J_1(t) &= \int_M^{st} e^{-x^2/(2t)} e^{\alpha x} h\left(t, \frac{x}{t}\right) d\mu(x) \\
&= t\int_{M/t}^{s} e^{-tx^2/2}e^{\alpha t x} h(t,x) d\mu(tx)
\end{split}
\end{equation}

Some observations on $h(t,x)$ :
\begin{enumerate}
\item $h(t,x)$ is bounded in $\rr_+ \times [0,s]$ since $s < \alpha$.
\item $\displaystyle h(x) = \lim_{t \to \infty} h(t,x) = \frac{1}{\alpha - x} - \frac{1}{\alpha + x}$ by dominated convergence theorem. Moreover, $h(x)$ is also bounded in $[0,s]$.
\end{enumerate}

We introduce a new sequence of measures $(\nu^+_t, \nu^-_t, t \ge 0)$ defined as
\begin{equation}
\begin{split}
d\nu^+_t (x) = e^{-\frac{tx^2}{2}} e^{\alpha t x} e^{-(\rho-\epsilon) t x} &= \sqrt{\frac{2\pi}{t}}e^{\frac{(\alpha - \rho + \epsilon)^2t}{2}} \sqrt{\frac{t}{2\pi}} e^{-\frac{t(x - (\alpha - \rho + \epsilon))^2}{2}}\\
d\nu^-_t (x) = e^{-\frac{tx^2}{2}} e^{\alpha t x} e^{-(\rho+\epsilon) t x} &= \sqrt{\frac{2\pi}{t}}e^{\frac{(\alpha - \rho - \epsilon)^2t}{2}} \sqrt{\frac{t}{2\pi}} e^{-\frac{t(x - (\alpha - \rho - \epsilon))^2}{2}}
\end{split}
\end{equation}

For both case notice that the latter part is a Gaussian density with mean $\alpha - \rho \pm \epsilon$ and variance $1/t$, therefore we have the following convergence of measure:
\begin{equation}\label{eq:medium_delta}
\begin{split}
\nu^+_t &\rightharpoonup \sqrt{\frac{2\pi}{t}}e^{\frac{(\alpha - \rho + \epsilon)^2t}{2}} \delta_{\alpha - \rho + \epsilon}\\
\nu^-_t &\rightharpoonup \sqrt{\frac{2\pi}{t}}e^{\frac{(\alpha - \rho - \epsilon)^2t}{2}} \delta_{\alpha - \rho - \epsilon}
\end{split}
\end{equation}

Therefore, 
\begin{equation}
\begin{split}
\limsup_{t\rightarrow \infty} J_1(t) &= \limsup_{t\rightarrow \infty} c\sqrt{2 \pi t}\int_{M/t}^{s} h(t,x) d\nu^+_t(x)\\
&= c\sqrt{2 \pi t} e^{\frac{(\alpha - \rho + \epsilon)^2t}{2}} \left(\frac{1}{\rho-\epsilon} - \frac{1}{2\alpha - \rho + \epsilon}\right)
\end{split}
\end{equation}

\begin{equation}
\begin{split}
\liminf_{t\rightarrow \infty} J_1(t) &= \liminf_{t\rightarrow \infty} c\sqrt{2\pi t}\int_{M/t}^{s} h(t,x) d\nu^-_t(x)\\
&= c\sqrt{2 \pi t} e^{\frac{(\alpha - \rho - \epsilon)^2t}{2}} \left(\frac{1}{\rho+\epsilon} - \frac{1}{2\alpha - \rho - \epsilon}\right)
\end{split}
\end{equation}

and since $\epsilon$ is arbitrary, we conclude that
\begin{equation}
J_1(t) \sim c\sqrt{2 \pi t}e^{\frac{(\alpha - \rho)^2t}{2}} \left(\frac{1}{\rho} - \frac{1}{2\alpha - \rho}\right)
\end{equation}

For the second interval $x \in (st, \infty)$ we first study some bound for $h(t,x/t)$. we start from the obvious.
\begin{equation} h\left(t, \frac{x}{t}\right) \le \int_0^\infty \exp\left(-\frac{y^2}{2t} + \alpha y + \frac{xy}{t}\right) \end{equation}

We can rewrite the exponent as 
\begin{equation}
\begin{split}
 - \frac{y}{2\sqrt{t}}\left( \frac{y}{\sqrt{t}}+2 \alpha \sqrt{t} -\frac{2x}{\sqrt{t}}\right)&=-\frac{1}{2} \frac{y}{\sqrt{t}} \left(\frac{y}{\sqrt{t}}+2\varphi\right)\\
  & = -\frac12 (w - \varphi )(w+\varphi)
 \end{split} 
 \end{equation} 

where $\displaystyle \varphi=  \left(\sqrt{t}\alpha - \frac{x}{\sqrt{t}}\right)$, and $\displaystyle w= \frac{y}{\sqrt{t}} + \varphi$. Therefore, after changing  variables $y\to w$, we obtain 
\begin{equation}\label{h_bound}
\begin{split}
h(t,x) &\le \sqrt{t}e^{\frac{\varphi^2}{2}} \int_\varphi^\infty e^{-\frac{w^2}{2}}dw\\
  & = \sqrt{t} e^{\frac{\alpha^2 t}{2}} e^{\frac{x^2}{2t}}  e^{-\alpha x} L \left(\sqrt{t}\alpha - \frac{x}{\sqrt{t}}\right),
 \end{split}
 \end{equation}

 where $\displaystyle L(z) = \int_z^\infty e^{-\frac{w^2}{2}} dw$.\\

$L$ has some nice properties:
\begin{enumerate}
\item $L(z)$ is strictly decreasing and bounded above by $\sqrt{2\pi}$.
\item When $z$ is negative, $L(z) < \sqrt{2 \pi}$.
\item When $z$ is positive, \begin{equation}L(z) \le \min\left(\frac{e^{-\frac{z^2}{2}}}{z}, \sqrt{\frac{\pi}{2}}\right)\end{equation}
\item More specifically, if $z \ge 1$ then \begin{equation}\label{standardbound} L(z) \le e^{-\frac{z^2}{2}}\end{equation}
\end{enumerate}

Using the bound above we get the following.
\begin{equation}
\begin{split}
J_2(t) &\le \sqrt{t} \int_{st}^\infty e^{\frac{\alpha^2 t}{2}}  L \left(\sqrt{t}\alpha - \frac{x}{\sqrt{t}}\right) d\mu(x) \\
&\le c \sqrt{2 \pi t} e^{\frac{\alpha^2 t}{2}} e^{-\rho s t} \\
&= c\sqrt{2 \pi t} e^{t\left(\frac{\alpha^2}{2} - \rho(\alpha - \eta)\right)}
\end{split}
\end{equation}

We want $\displaystyle J_2(t) = o(J_1(t)) = o\left(\sqrt{t}e^{\frac{(\alpha-\rho)^2 t}{2}}\right)$. Indeed, if we pick $\eta = \rho/4$,
\begin{equation}
\begin{split}
\frac{(\alpha - \rho)^2}{2} - \left(\frac{\alpha^2}{2} - \gamma(\alpha - \eta)\right) &= \frac{\rho^2}{2} - \rho\eta \\
&= \frac{\rho^2}{4} > 0
\end{split}
\end{equation}

therefore we get the desired asymptotic. \\

For the last interval $x \in [0,M]$, we use the fact that for any $\epsilon > 0$, we can fix $t_0$ such that for each $t > t_0$, $M/\sqrt{t} < \epsilon$. And for such $t$, we have
\begin{equation}\label{eq:medj31}
\begin{split}
J_3(t) &= \int_0^M e^{-\frac{x^2}{2t}} e^{\alpha x} \int_0^\infty e^{-\frac{y^2}{2t}} e^{-\alpha y} \sinh\left(\frac{xy}{t}\right) dy d\mu(x) \\
& \le \sqrt{t} e^{\frac{\alpha^2 t}{2}} \int_0^M \mu(x) L\left(\sqrt{t}\alpha - \frac{x}{\sqrt{t}}\right) d\mu(x)
\end{split}
\end{equation}
And since $L$ is decreasing,
\begin{equation}\label{eq:medj32}
\eqref{eq:medj31} \le \sqrt{t} e^{\frac{\alpha^2 t}{2}} \int_0^M L\left(\sqrt{t}\alpha - \epsilon \right) d\mu(x)
\end{equation}
Finally using \eqref{standardbound} and that $\mu$ is a probability measure,
\begin{equation}
\begin{split}
\eqref{eq:medj32} & \le \sqrt{t} \int_0^M e^{\alpha\epsilon\sqrt{t}}e^{-\frac{\epsilon^2}{2}} d\mu(x) \\
& \le \sqrt{t} e^{\alpha\epsilon\sqrt{t} - \frac{\epsilon^2}{2}}\\
&= o\left(\sqrt{t}e^{\frac{(\alpha-\rho)^2t}{2}}\right) = o(J_1(t))
\end{split}
\end{equation}
\end{proof}

We now turn to computing the limiting density.

\begin{equation}
\begin{split}
P_\mu(X_t \in dy, \tau>t) &= \frac{e^{-\frac{\alpha^2 t}{2}}}{\sqrt{2\pi t}} \left(\int_0^\infty \underset{=g(x,y,t)}{\underbrace{e^{-\frac{x^2}{2t}} e^{\alpha x} e^{-\frac{y^2}{2t}} e^{-\alpha y} \sinh\left(\frac{xy}{t}\right) }} d\mu(x) \right)\\
&= \frac{e^{-\frac{\alpha^2 t}{2}}}{\sqrt{2\pi t}} \left(\underset{= K_3(t, y)}{\underbrace{\int_0^M g(x,y,t) d\mu(x)}} + \underset{= K_1(t, y)}{\underbrace{\int_M^{st}g(x,y,t) d\mu(x)}} + \underset{= K_2(t, y)}{\underbrace{\int_{st}^\infty g(x,y,t) d\mu(x)}} \right)
\end{split}
\end{equation}

Where $M, s$ are the same as \eqref{eq:m3}. 

\begin{proposition}\label{prop:medium2}
Under assumption \eqref{assume:medium}, 
\begin{equation}
\begin{split}
P_\mu(X_t \in dy, \tau > t) &\sim \frac{e^{-\frac{\alpha^2 t}{2}}}{\sqrt{2\pi t}} K_1(t,y) \\
&\sim c e^{-\frac{(2\alpha \rho - \rho^2)t}{2}} e^{-\alpha y} \sinh ((\alpha - \rho) y)
\end{split}
\end{equation}
where $c$ is the constant in \eqref{eq:medasymp} which only depends on $\mu$.
\end{proposition}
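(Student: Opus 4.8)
The plan is to prove both equivalences in the statement via the three–interval scheme already in place, $P_\mu(X_t\in dy,\tau>t)=\frac{e^{-\alpha^2 t/2}}{\sqrt{2\pi t}}\bigl(K_3(t,y)+K_1(t,y)+K_2(t,y)\bigr)$: first I would show that $K_1(t,y)$ swamps $K_2(t,y)$ and $K_3(t,y)$, which gives the first ``$\sim$''; then I would run a Laplace-type computation on $K_1(t,y)$ to obtain the explicit form, which gives the second. The whole argument runs in lockstep with the proof of Proposition~\ref{prop:medium1}; the only change is that the inner integral $h(t,x/t)=\int_0^\infty e^{-y^2/(2t)}e^{-\alpha y}\sinh(xy/t)\,dy$ appearing there is replaced by the ``un-integrated'' kernel $g(x,y,t)=e^{-x^2/(2t)}e^{\alpha x}\,e^{-y^2/(2t)}e^{-\alpha y}\sinh(xy/t)$ at a fixed $y$, so the estimates carry over with $y$ as a spectator parameter.

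For the negligible pieces: on the far tail $x\in(st,\infty)$ with $s=\alpha-\eta$, bound $\sinh(xy/t)\le\tfrac12 e^{xy/t}$ and complete the square, $-\frac{x^2}{2t}+\alpha x-\frac{y^2}{2t}-\alpha y+\frac{xy}{t}=-\frac{(x-y-\alpha t)^2}{2t}+\frac{\alpha^2 t}{2}$, so that $g(x,y,t)\le\tfrac12 e^{\alpha^2 t/2}$ on this range and hence $K_2(t,y)\le\tfrac12 e^{\alpha^2 t/2}\,\mu([st,\infty))$. By \eqref{eq:medasymp} this is at most $\tfrac{c}{2(\rho-\epsilon)}\exp\!\bigl(t(\tfrac{\alpha^2}{2}-(\rho-\epsilon)(\alpha-\eta))\bigr)$, and choosing $\eta=\rho/4$ (with $\epsilon$ small) pushes the exponent strictly below $(\alpha-\rho)^2/2$ — verbatim the estimate for $J_2$ in Proposition~\ref{prop:medium1} — so $K_2(t,y)=o\!\bigl(\sqrt t\,e^{(\alpha-\rho)^2 t/2}\bigr)$. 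On the bounded piece $x\in[0,M]$, use $e^{-x^2/(2t)}e^{\alpha x}\le e^{\alpha M}$ and $\sinh(xy/t)\le\sinh(My/t)\sim My/t$ to get $K_3(t,y)=O(1/t)$ (or reuse the $L$-function bound exactly as in the $J_3$ estimate). Since $K_1(t,y)$ is shown below to be of order $\sqrt t\,e^{(\alpha-\rho)^2 t/2}$, both $K_2$ and $K_3$ are $o(K_1)$, giving $P_\mu(X_t\in dy,\tau>t)\sim\frac{e^{-\alpha^2 t/2}}{\sqrt{2\pi t}}K_1(t,y)$.

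To evaluate $K_1(t,y)=\int_M^{st}g(x,y,t)\,d\mu(x)$, substitute $x=tz$, so that $\sinh(xy/t)=\sinh(zy)$ and the factor $z\mapsto e^{-y^2/(2t)}e^{-\alpha y}\sinh(zy)$ is continuous, bounded on $[0,s]$, and converges uniformly there to $e^{-\alpha y}\sinh(zy)$ as $t\to\infty$. Sandwiching $d\mu$ between exponentials via \eqref{eq:medasymp} and rewriting the resulting integrals through the measures $\nu^{\pm}_t$ from the proof of Proposition~\ref{prop:medium1} — which, after normalization by $\sqrt{2\pi/t}\,e^{(\alpha-\rho\pm\epsilon)^2 t/2}$, converge weakly to $\delta_{\alpha-\rho\pm\epsilon}$ by \eqref{eq:medium_delta}, with concentration point $\alpha-\rho\pm\epsilon\in(M/t,s)$ for all large $t$ once $\eta<\rho$ and $\epsilon$ is small — I get $\limsup_{t\to\infty}K_1(t,y)/\bigl(c\sqrt{2\pi t}\,e^{(\alpha-\rho+\epsilon)^2 t/2}\bigr)\le e^{-\alpha y}\sinh((\alpha-\rho+\epsilon)y)$ and the matching lower bound with $\nu^{-}_t$ and $\alpha-\rho-\epsilon$. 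Letting $\epsilon\to 0$ gives $K_1(t,y)\sim c\sqrt{2\pi t}\,e^{(\alpha-\rho)^2 t/2}\,e^{-\alpha y}\sinh((\alpha-\rho)y)$, and multiplying by $\frac{e^{-\alpha^2 t/2}}{\sqrt{2\pi t}}$, using $-\frac{\alpha^2}{2}+\frac{(\alpha-\rho)^2}{2}=-\frac{2\alpha\rho-\rho^2}{2}$, yields the claimed $ce^{-(2\alpha\rho-\rho^2)t/2}e^{-\alpha y}\sinh((\alpha-\rho)y)$.

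The main obstacle is the $K_2$ bound. Since $\sinh\le\tfrac12\exp$ is very lossy on $(st,\infty)$, it is not a priori clear that $K_2$ stays below $K_1$; what rescues it is that after completing the square the Gaussian factor $e^{-(x-y-\alpha t)^2/(2t)}$ peaks at $x\approx\alpha t$, which lies outside the effective support $x\approx(\alpha-\rho)t$ of $K_1$, so the crude bound $K_2(t,y)\le\tfrac12 e^{\alpha^2 t/2}\mu([st,\infty))$ — together with the freedom to take $s=\alpha-\eta$ as close to $\alpha$ as we like — already suffices. A lesser point is that, to later combine Proposition~\ref{prop:medium2} with Proposition~\ref{prop:medium1} through Scheffé's lemma (Lemma~\ref{lem:weak_scheffe}) to conclude $P_\mu(X_t\in\cdot\mid\tau>t)\to\pi_{\alpha-\rho}$ in Theorem~\ref{thm:medium}, one wants the equivalence to hold locally uniformly in $y$; this comes for free, since every error term above depends on $y$ only through factors continuous and bounded on compact $y$-sets.
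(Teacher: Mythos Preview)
Your proposal is correct and follows essentially the same approach as the paper: analyze $K_1(t,y)$ via the auxiliary measures $\nu_t^{\pm}$ that concentrate at $\alpha-\rho\pm\epsilon$, and show $K_2$ and $K_3$ are negligible. The only notable difference is your treatment of $K_2$: you complete the square in $x$ directly to obtain the pointwise bound $g(x,y,t)\le\tfrac12 e^{\alpha^2 t/2}$ and then integrate against $\mu$, whereas the paper instead replaces $d\mu$ by the exponential density from \eqref{eq:medasymp} and evaluates the resulting Gaussian-type integral through the function $L$ and the bound \eqref{standardbound}; your route is a bit more direct but both land on the same exponent comparison with $\eta=\rho/4$.
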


\begin{proof}
Using similar estimation method and sequence of measures $(\nu_t^+, \nu_t^-, t \ge 0)$ as before, we can see that for each $y \in \rr_+$

\begin{equation}
\begin{split}
\limsup_{t \to \infty} K_1(t,y) &= \limsup_{t \to \infty} c\sqrt{2\pi t} \int_{M/t}^s e^{-\frac{y^2}{2t}}e^{-\alpha y} \sinh (xy) d\nu_t^+(x) \\
&= c\sqrt{2\pi t} e^{\frac{(\alpha - \rho + \epsilon)^2 t}{2}} e^{-\alpha y} \sinh ((\alpha - \rho + \epsilon) y)
\end{split}
\end{equation}

\begin{equation}
\begin{split}
\liminf_{t \to \infty} K_1(t,y) &= \liminf_{t \to \infty} c\sqrt{2\pi t} \int_{M/t}^s e^{-\frac{y^2}{2t}}e^{-\alpha y} \sinh (xy) d\nu_t^-(x) \\
&= c\sqrt{2\pi t} e^{\frac{(\alpha - \rho - \epsilon)^2 t}{2}} e^{-\alpha y} \sinh ((\alpha - \rho - \epsilon) y)
\end{split}
\end{equation}

and therefore

\begin{equation}
K_1(t,y) \sim c\sqrt{2\pi t} e^{\frac{(\alpha - \rho)^2 t}{2}} e^{-\alpha y} \sinh ((\alpha - \rho) y)
\end{equation}

For $K_2(t,y)$ we use the upper bound in \eqref{eq:medasymp} to get the following estimate.

\begin{equation}\label{eq:medj21}
\begin{split}
K_2(t, y) &\le e^{-\frac{y^2}{2t}} e^{-\alpha y} \int_{st}^\infty e^{-\frac{x^2}{2t}} e^{(\alpha-\rho+\epsilon)x} e^{\frac{xy}{t}} dx \\
& = \sqrt{t} e^{\frac{(\alpha-\rho+\epsilon)^2t}{2}} e^{(-\rho+\epsilon)y} L\left(\sqrt{t}s - \sqrt{t}(\alpha-\rho+\epsilon) - \frac{y}{\sqrt{t}}\right)
\end{split}
\end{equation}

Since $s - (\alpha -\rho + \epsilon) > 0$ for small enough $\epsilon$, the argument for $L$ above is strictly positive and increasing. Therefore by \eqref{standardbound},

\begin{equation}\label{eq:medj22}
\eqref{eq:medj21} \le \sqrt{t}e^{\frac{(\alpha - \rho)^2 t}{2}}\exp\left(- \frac{(s - (\alpha - \rho))^2 t}{2} + (2(\alpha - \rho) - s) \epsilon t\right) e^{(s - \alpha + 2\epsilon) y}
\end{equation}

Again, $s - (\alpha - \rho) > 0$ and $\epsilon$ is arbitrarily small so the middle term above is exponentially decaying. We conclude that

\begin{equation}
\eqref{eq:medj22} = o\left(\sqrt{t}e^{\frac{(\alpha - \rho)^2 t}{2}} \right) = o(K_1(t,y))
\end{equation}

Finally for $K_3(t,y)$ we can directly apply the dominated convergence theorem.

\begin{equation}
\begin{split}
K_3(t,y) &\sim \int_0^M e^{\alpha x} e^{-\alpha y} \sinh(0) d\mu(x) \\
&= o(1) = o(K_1(t,y))
\end{split}
\end{equation}

\end{proof}

We can now prove Theorem \ref{thm:medium}.

\begin{proof}[Proof of Theorem \ref{thm:medium}]
The fact that $\epsilon$ is arbitrarily small in \eqref{eq:medium_delta} proves the first part of the theorem. The second part follows from  Proposition \ref{prop:medium1} and  Proposition \ref{prop:medium2} and Lemma  \ref{lem:weak_scheffe}. 
\end{proof}

\section{Proofs: Heavy Tails}\label{heavy_tail_proofs}
In Section Section \ref{sec:51} we will prove Lemma \ref{lem:heavy_tail} to see that adequate scaling is necessary to obtain a non-trivial quasi-limiting behavior. In Section \ref{sec:52} we obtain the scaling by    estimating the tail distribution of the surviving process, Proposition \ref{subexprop}.  In Section \ref{sec:53} we will use this  to prove Theorem \ref{thm:subexp}. Finally, in Section \ref{sec:examples} we present a number of concrete applications to Theorem \ref{thm:subexp}. 
\subsection{On tail of the initial distribution and tail of survival time}
\label{sec:51} 
\begin{proof}[Proof of Lemma \ref{lem:heavy_tail}]
Pick $b > 0$ such that $\displaystyle \sinh (\alpha b) > \frac{1}{4}e^{\alpha b}$ then by Proposition \ref{prop:masterformula} we have:
\begin{equation*}
\begin{split}
P_\mu(\tau > t) &\ge P_\mu(X_0 > \alpha t, X_t > b, \tau>t)\\
&= \frac{e^{-\frac{\alpha^2 t}{2}}t}{\sqrt{2\pi t}} \int_{\alpha}^\infty \mu(tx) e^{-\frac{tx^2}{2}} e^{\alpha t x} \int_b^\infty e^{-\frac{y^2}{2t}} e^{-\alpha y} (e^{xy} - e^{-xy}) dy dx \\
&\ge \frac{e^{-\frac{\alpha^2 t}{2}}t}{4\sqrt{2\pi t}} \int_{\alpha}^\infty \mu(tx) e^{-\frac{tx^2}{2}} e^{\alpha t x} \int_b^\infty e^{-\frac{y^2}{2t}} e^{-\alpha y} e^{xy} dy dx \\
&= \frac{t}{4\sqrt{2\pi}} \int_{\alpha}^\infty \mu(tx) L\left(\frac{b}{\sqrt{t}} + \sqrt{t}(\alpha-x)\right) dx \\
&\ge \frac{1}{8}\mu([t\alpha, \infty))
\end{split}
\end{equation*}
This implies that $P_\mu(\tau > t)$ is at least as heavy as the tail distribution of $\mu$. By Proposition \ref{prop:heavytail}, any initial distribution $\mu$ that has heavier-than-exponential tail distribution cannot converge to a QSD.
\end{proof}

\subsection{Distribution of the surviving processes}\label{sec:52}

The method we develop here works for a large class of distributions $\mu$, yet both scaling and limit distributions may depend on the choice of $\mu$. \\

Recall that we work under the Assumption \ref{assume:nice_heavy_tail}. We can write the density of $\mu$ as follows.
\begin{equation}\label{density}
\text{If } \beta > 0 \text{ then } \mu(x) = F'(x)\exp(-F(x)) = F'(x)\mu([x,\infty))
\end{equation}

Note that by \cite[Proposition 1.8.1]{rv_bingham} , $F'(x)$ is smooth varying with index $\beta-1$. \\

We turn to the tail distribution. By the above assumption, $\mu$ has a continuous density, which we also denote by $\mu$.

\begin{equation}
\begin{split}
&P_\mu(X_t > a_t, \tau > t) 
\begin{aligned}[t]&=\frac{e^{-\frac{\alpha^2 t}{2}}}{\sqrt{2\pi t}} \left(\int_0^\infty e^{-\frac{x^2}{2t}} e^{\alpha x} \int_{a_t}^\infty e^{-\frac{y^2}{2t}} e^{-\alpha y} \left(e^{\frac{xy}{t}} - e^{-\frac{xy}{t}}\right) dy d\mu(x)\right) \\
&= \frac{e^{-\frac{\alpha^2 t}{2}}t}{\sqrt{2\pi t}} \int_0^\infty \mu(tx) e^{-\frac{tx^2}{2}} e^{\alpha t x} \int_{a_t}^\infty e^{-\frac{y^2}{2t}} e^{-\alpha y} (e^{xy} - e^{-xy}) dy dx
\end{aligned} \\
&= \frac{t}{\sqrt{2\pi}} \left(\underset{=J_1(t)}{\underbrace{\int_0^\infty \mu(tx) L\left(\frac{a_t}{\sqrt{t}} + \sqrt{t}\alpha - \sqrt{t}x\right) dx}}\right. - \left.\underset{= J_2(t)}{\underbrace{\int_0^\infty \mu(tx) e^{2\alpha t x} L\left(\frac{a_t}{\sqrt{t}} + \sqrt{t}\alpha + \sqrt{t}x\right) dx}} \right)
\end{split}
\end{equation}

We first notice that from the second term $J_2$,
\begin{equation}
\begin{split}
e^{2\alpha t x} L\left(\frac{a_t}{\sqrt{t}} + \sqrt{t}\alpha + \sqrt{t}x\right) dx &\le e^{2 \alpha t x} \frac{e^{-\frac{a_t^2/t + t\alpha^2 + tx^2 + 2 a_t \alpha + 2 a_t x + 2 \alpha t x}{2}}}{a_t/\sqrt{t} + \sqrt{t}\alpha + \sqrt{t}x} \\
&= \frac{e^{-\frac{t(\alpha - x)^2}{2}}e^{-a_t^2/(2t)} e^{-a_t(\alpha+x)}}{a_t/\sqrt{t} + \sqrt{t}\alpha + \sqrt{t}x}
\end{split}
\end{equation}

If $a_t \gg \epsilon\sqrt{t}$ then the term $e^{-a_t^2/(2t)}$ will let $J_2$ decay faster (in exponential sense) than $\mu(tx)$. In fact, unless $a_t = o(\sqrt{t})$ and $x \in (\alpha - t^{-1/2 + \epsilon}, \alpha + t^{-1/2 + \epsilon})$, $J_2$ decays exponentially faster than $\mu(tx)$. \\

Furthermore, when we define $J_{1,A}(t), J_{2,A}(t)$ to be integrated over some sub-interval $A$ of $\mathbb{R}_+$ instead of the entire $\mathbb{R}_+$ as follows:
\begin{equation}
\begin{split}
J_{1,A}(t) &= \int_A \underset{= f(t,x)}{\underbrace{\mu(tx) L\left(\frac{a_t}{\sqrt{t}} + \sqrt{t}\alpha - \sqrt{t}x\right)}} dx\\
J_{2,A}(t) &= \int_A \mu(tx) e^{2\alpha t x} L\left(\frac{a_t}{\sqrt{t}} + \sqrt{t}\alpha + \sqrt{t}x\right) dx
\end{split}
\end{equation}
since $P_\mu(X_0 \in \cdot, X_t \in \cdot, \tau > t) \geq 0$ always, we can claim that $J_{2,A} = O(J_{1,A})$ on the same sub-interval $A \in \mathbb{R}_+$.

For the first term $J_1$, we split the integration.
\begin{equation}\label{j1_estimate}
\begin{split}
J_1(t) &= \underset{= J_{1,1}(t)}{\underbrace{\int_0^{\alpha + a_t/t - \eta_t} f(t,x)dx}} + \underset{=J_{1,2}(t)}{\underbrace{\int_{\alpha + a_t/t - \eta_t}^{\alpha + a_t/t + \epsilon_t} f(t,x) dx}} + \underset{=J_{1,3}(t)}{\underbrace{\int_{\alpha + a_t/t + \epsilon_t}^\infty f(t,x) dx}}
\end{split}
\end{equation}
where $\eta_t, \epsilon_t$ is to be picked depending on $\mu$. \\

The goal now is to get an accurate asymptotic on the survival rate.

\begin{proposition}\label{subexprop0}
Suppose $\mu$ satisfies Assumption \ref{assume:nice_heavy_tail}. Then for any $\displaystyle \eta_t \gg t^{\beta-1}$,
\begin{equation}
\log J_{1,1}(t) \ll \log \mu([t\alpha + a_t, \infty))
\end{equation}
\end{proposition}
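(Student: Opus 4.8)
The plan is a Laplace--type estimate of $J_{1,1}(t)=\int_0^{\alpha+a_t/t-\eta_t}f(t,x)\,dx$, with $f(t,x)=\mu(tx)\,L\!\bigl(\tfrac{a_t}{\sqrt t}+\sqrt t\,\alpha-\sqrt t\,x\bigr)$: I would show that this integral is controlled, up to a factor tending to $0$, by the value of $f$ at the right endpoint $x=\alpha+a_t/t-\eta_t$, and that that value is at most $F'(t\alpha)\,\mu([t\alpha+a_t,\infty))$ times a power of $t$, so that $J_{1,1}(t)$ is negligible next to $\mu([t\alpha+a_t,\infty))$; concretely I would establish $\log J_{1,1}(t)-\log\mu([t\alpha+a_t,\infty))\to-\infty$. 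The one structural input that makes the estimate work is that $F'$ is regularly varying of index $\beta-1<0$, so $F'(t\alpha)\asymp t^{\beta-1}$ up to a slowly varying factor; hence the hypothesis $\eta_t\gg t^{\beta-1}$ is exactly the statement $\eta_t\gg F'(t\alpha)$.

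First I would write $\mu(tx)=F'(tx)e^{-F(tx)}$ and bound $L$ on the interval of integration, where its argument is at least $\sqrt t\,\eta_t$, using $L(w)\le\sqrt{2\pi}$ and, when $w\ge1$, the bound \eqref{standardbound}. Setting
\[
\Phi(x):=-F(tx)-\tfrac12\Bigl(\tfrac{a_t}{\sqrt t}+\sqrt t\,\alpha-\sqrt t\,x\Bigr)^2,
\]
this gives $\log f(t,x)\le\log F'(tx)+\Phi(x)+O(1)$. Next I would locate $\max\Phi$ on $[0,\alpha+a_t/t-\eta_t]$: since $\Phi'(x)=t\bigl[(\alpha+a_t/t-x)-F'(tx)\bigr]$ and on this interval $\alpha+a_t/t-x\ge\eta_t$, which dominates $F'(tx)$ once $x$ is bounded away from $0$, $\Phi$ is increasing there, so its maximum is at the right endpoint; a neighbourhood of $0$ (where $F'(tx)$ may blow up) is harmless, because there the argument of $L$ is $\approx\sqrt t\,\alpha$, so $L\le e^{-t\alpha^2/4}$ while $\int_0^\infty\mu=1$ bounds the residual mass, giving a super-exponentially small contribution. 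At the right endpoint $\Phi(\alpha+a_t/t-\eta_t)=-F(t\alpha+a_t-t\eta_t)-\tfrac12 t\eta_t^2$, and by the mean value theorem together with monotonicity and regular variation of $F'$ one has $F(t\alpha+a_t)-F(t\alpha+a_t-t\eta_t)\le C\,t\eta_t\,F'(t\alpha)=o(t\eta_t^2)$ because $\eta_t\gg F'(t\alpha)$; hence $\Phi(\alpha+a_t/t-\eta_t)\le-F(t\alpha+a_t)$.

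To finish I would integrate. Substituting $x=\alpha+a_t/t-s$ and using the same expansions, on the part of the range near the endpoint one gets $\Phi(x)\le-F(t\alpha+a_t)-\tfrac14 ts^2$ and $F'(tx)=F'(t\alpha)(1+o(1))$, while on the remaining range $\Phi$ is already far below $-F(t\alpha+a_t)$ and the earlier $e^{-t\alpha^2/4}$ bound applies; therefore
\[
J_{1,1}(t)\;\lesssim\;F'(t\alpha)\,e^{-F(t\alpha+a_t)}\int_{\eta_t}^{\infty}e^{-ts^2/4}\,ds\;\lesssim\;t^{-1/2}\,F'(t\alpha)\,e^{-F(t\alpha+a_t)}.
\]
Taking logarithms, $\log J_{1,1}(t)\le\log\mu([t\alpha+a_t,\infty))+\log F'(t\alpha)-\tfrac12\log t+O(1)$, and since $F'$ has negative index $\log F'(t\alpha)\to-\infty$, so $\log J_{1,1}(t)-\log\mu([t\alpha+a_t,\infty))\to-\infty$, which is the asserted comparison.

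The step I expect to be the main obstacle is the uniform handling of the slowly varying corrections: the equivalence $F'(t\alpha)\asymp t^{\beta-1}$, the estimates $F'(t\alpha+a_t-t\eta_t)\le C\,F'(t\alpha)$ and $F(t\alpha+a_t)-F(t\alpha+a_t-t\eta_t)\le C\,t\eta_t\,F'(t\alpha)$, and the replacement $F'(tx)=F'(t\alpha)(1+o(1))$ near the endpoint, all uniformly whether $\eta_t\to0$ or $\eta_t$ stays bounded below and whether $\sqrt t\,\eta_t\to0$ or $\to\infty$. These follow from Potter's bounds and the uniform convergence theorem for (smoothly) varying functions, but this is where the assumptions $\beta<1/2$ and $\eta_t\gg t^{\beta-1}$ are actually consumed; making the monotonicity of $\Phi$ rigorous near $x=0$ (rather than just at the endpoint) also needs a short separate argument.
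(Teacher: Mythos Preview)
Your argument is correct (modulo the slowly-varying technicalities you flag yourself, which are present in the paper's own proof at the same level), and it proves what the paper actually uses, namely $J_{1,1}(t)=o\!\bigl(\mu([t\alpha+a_t,\infty))\bigr)$; the ``$\log\ll\log$'' in the displayed statement appears to be a misprint, and both you and the paper establish the non-log version.

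The route, however, is genuinely different. The paper does not do a Laplace estimate. It first handles the easy regime $\eta_t\gg t^{(\beta-1)/2}$ by the crude bound $J_{1,1}(t)\le L(\sqrt{t}\,\eta_t)\le e^{-t\eta_t^2/2}$, and then closes the gap down to $\eta_t\gg t^{\beta-1}$ by an inductive splitting: for a decreasing sequence $\eta_t^1\gg\eta_t^2\gg\cdots$ with $\eta_t^n\gg t^{(\beta-1)(1-2^{-n})}$, each shell $\int_{\alpha+a_t/t-\eta_t^{k-1}}^{\alpha+a_t/t-\eta_t^{k}}f(t,x)\,dx$ is bounded via integration by parts, trading the Gaussian tail of $L$ against the tail of $\mu$ to force $\beta+r_{k-1}<1+2r_k$. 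Your approach collapses this recursion into a single step by reading the integrand as $\exp\Phi$ and showing $\Phi$ is increasing on the whole interval, which makes transparent why the threshold is $\eta_t\gg F'(t\alpha)\asymp t^{\beta-1}$: it is exactly the condition $\Phi'>0$ at the right endpoint. What the paper's iteration buys is that it never needs to differentiate $\Phi$ or invoke uniform Potter-type control of $F'$ across the whole interval---each shell only compares $\mu$ at two nearby points---so it is slightly more robust to slowly-varying corrections; what your approach buys is a one-shot argument and a sharper quantitative bound $J_{1,1}(t)\lesssim t^{-1/2}F'(t\alpha)\,\mu([t\alpha+a_t,\infty))$, which the recursion does not deliver directly.
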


\begin{proof}
Suppose $\displaystyle \eta_t \gg t^{\frac{\beta-1}{2}}$. Then we have the following estimate.
\begin{equation}\label{base}
\begin{split}
J_{1,1}(t) &\le L(\sqrt{t} \eta_t)\int_0^{\alpha + a_t/t-\eta_t} \mu(tx)dx\\
&\le L(\sqrt{t}\eta_t)\\
&\le \exp\left(-\frac{t(\eta_t)^2}{2}\right)\\
&\ll \exp\left(\frac{t^{-\beta}}{2}\right) \sim \mu([t\alpha + a_t, \infty))
\end{split}
\end{equation}

Now suppose $\displaystyle t^{\frac{\beta-1}{2}} \gg \eta_t \gg t^{\beta-1}$. Pick $\displaystyle t^{(\beta-1)/2} \ll \eta_t^1 = t^{r_1}$ such that by \eqref{base},
$$\underset{=J_{1,1,1}(t)}{\underbrace{\int_0^{\alpha + a_t/t -\eta_t^1} \mu(tx)L\left(\sqrt{t}\alpha + \frac{a_t}{\sqrt{t}} - \sqrt{t}x\right) dx}} \ll \mu([t\alpha + a_t, \infty))$$

Now we want to pick $\displaystyle t^{r_2} = \eta_t^2 \ll \eta_t^1$ such that
$$\underset{=J_{1,1,2}(t)}{\underbrace{\int_{\alpha + a_t/t -\eta_t^1}^{\alpha + a_t/t -\eta_t^2} \mu(tx)L\left(\sqrt{t}\alpha + \frac{a_t}{\sqrt{t}} - \sqrt{t}x\right) dx}} \ll \mu([t\alpha + a_t, \infty))$$

Using integration by parts, 
\begin{equation}\label{intparts}
\begin{split}
J_{1,1,2}(t) &= \int_{\alpha + a_t/t -\eta_t^1}^{\alpha + a_t/t -\eta_t^2} \mu(tx)L\left(\sqrt{t}\alpha + \frac{a_t}{\sqrt{t}} - \sqrt{t}x\right) dx\\
&= -\left.\frac{1}{t} \mu([tx, \infty)) L\left(\sqrt{t}\alpha + \frac{a_t}{\sqrt{t}} - \sqrt{t}x\right) \right|_{\alpha + a_t/t -\eta_t^1}^{\alpha + a_t/t -\eta_t^2} \\
&\quad \quad + \frac{1}{\sqrt{t}}\int_{\alpha + a_t/t -\eta_t^1}^{\alpha + a_t/t -\eta_t^2} \exp\left(\frac{-t(x-(\alpha + a_t/t))^2}{2}\right) \mu([tx, \infty)) dx\\
&\le \frac{1}{t}\left(-\mu(t\alpha + a_t - t\eta_t^2, \infty)L(\sqrt{t}\eta_t^2) + \mu(t\alpha + a_t - t\eta_t^1, \infty)L(\sqrt{t}\eta_t^1)\right) \\
&\quad \quad + \frac{1}{t}\mu(t\alpha + a_t - t\eta_t^1, \infty)L(\sqrt{t}\eta_t^2)\\
\end{split}
\end{equation}

Since both $\mu([x,\infty))$ and $L(x)$ are decreasing function, the driving term of \eqref{intparts} is the last one. And since $\displaystyle \mu(x, \infty) = \exp(-F(x))$ where $F$ is an increasing regularly varying function with index $\beta$,

\begin{equation}
\begin{split}
\frac{1}{t}\mu(t\alpha + a_t - t\eta_t^1, \infty)L(\sqrt{t}\eta_t^2) &\sim \frac{1}{t}\mu(t\alpha + a_t - t\eta_t^1, \infty)L(\sqrt{t}\eta_t^2)\\
&\le \frac{1}{t} \exp\left(-F((t\alpha + a_t) - t^{1+r_1})\right) \exp\left(-\frac{t^{1+2r_2}}{2}\right)\\
&\sim \frac{1}{t} \mu(t\alpha + a_t,\infty) \exp\left(t^{\beta + r_1} - \frac{t^{1+2r_2}}{2}\right)
\end{split}
\end{equation}

If $\beta + r_1 < 1+2r_2$ we get the desired asymptotic. That is, we need $\displaystyle r_2 > \frac{(\beta-1)+r_1}{2}$, and combining with $t^{(\beta-1)/2} \ll \eta_t^1$ we can pick
$$\eta_t^2 \gg t^{\frac{(\beta-1)+r_1}{2}} \sim t^{\frac{3(\beta-1)}{4}}$$

to get 
\begin{equation}
\begin{split}
\int_0^{\alpha + a_t/t -\eta_t^2} \mu(tx)L\left(\sqrt{t}\alpha + \frac{a_t}{\sqrt{t}} - \sqrt{t}x\right) dx &= J_{1,1,1}(t) + J_{1,1,2}(t) \\
&\ll \mu([t\alpha + a_t, \infty))
\end{split}
\end{equation}

Recursively, we can pick $\eta_t^n \gg t^{(\beta-1)(1-(1/2)^n)}$ such that
$$J_{1,1,n}(t) = \int_{\alpha + a_t/t -\eta_t^{n-1}}^{\alpha + a_t/t -\eta_t^n} \mu(tx)L\left(\sqrt{t}\alpha + \frac{a_t}{\sqrt{t}} - \sqrt{t}x\right) dx \ll \mu([t\alpha + a_t, \infty))$$

So for sufficiently large $n$ we have
$$\eta_t = \eta_t^n \gg t^{(\beta-1)(1-(1/2)^n)} \gg t^{\beta-1}$$
$$J_{1,1}(t) = \sum_{i=1}^n J_{1,1,i}(t) \ll \mu([t\alpha + a_t, \infty))$$

which completes the proof.
\end{proof}

\begin{proposition}\label{subexprop}
Suppose $\mu$ satisfies \ref{assume:nice_heavy_tail} and $\beta > 0$. If $a_t \gg \sqrt{t}$,
\begin{equation}P_\mu(X_t > a_t, \tau > t) \sim \frac{t}{\sqrt{2\pi}} J_{1,3}(t) \sim \mu([t\alpha+a_t, \infty))\end{equation}
\end{proposition}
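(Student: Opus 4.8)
The plan is to combine the splitting $P_\mu(X_t>a_t,\tau>t)=\frac{t}{\sqrt{2\pi}}\bigl(J_1(t)-J_2(t)\bigr)$ with the further decomposition \eqref{j1_estimate} and to show that every term except $J_{1,3}(t)$ is of strictly smaller order, while
\[
J_{1,3}(t)\ \sim\ \tfrac{\sqrt{2\pi}}{t}\,\mu([t\alpha+a_t,\infty)).
\]
Concretely I aim to establish $J_{1,1}(t),\,J_{1,2}(t),\,J_2(t)=o\!\left(t^{-1}\mu([t\alpha+a_t,\infty))\right)$ and the display above, after which adding up gives $P_\mu(X_t>a_t,\tau>t)=\frac{t}{\sqrt{2\pi}}\bigl(J_{1,3}(t)+o(t^{-1}\mu([t\alpha+a_t,\infty)))\bigr)\sim\mu([t\alpha+a_t,\infty))\sim\frac{t}{\sqrt{2\pi}}J_{1,3}(t)$. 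The preliminary step is to pin down the cutoffs $\eta_t,\epsilon_t$ in \eqref{j1_estimate}. Using Assumption \ref{assume:nice_heavy_tail}, the regular variation of $R(\cdot,c)$ with index $1-\beta$, and the hypothesis $\beta<1/2$ (so $1-\beta>1/2$), one can choose
\[
t^{\beta-1}\ll\eta_t,\qquad t^{-1/2}\ll\epsilon_t,\qquad t\eta_t\ll R(t\alpha+a_t,c)\ \text{ and }\ t\epsilon_t\ll R(t\alpha+a_t,c)\ \text{ for every }c>0.
\]
The two left inequalities make Proposition \ref{subexprop0} applicable and force the argument of $L$ that appears in $J_{1,3}$ to tend to $-\infty$; the two right inequalities, via \eqref{eq:nice_difference}, guarantee that $F$ is asymptotically flat across the transition window $[t\alpha+a_t-t\eta_t,\ t\alpha+a_t+t\epsilon_t]$, i.e.\ $F(t\alpha+a_t\pm t(\text{cutoff}))-F(t\alpha+a_t)\to0$, hence $\mu([t\alpha+a_t\pm t(\text{cutoff}),\infty))=\mu([t\alpha+a_t,\infty))(1+o(1))$.

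Granting this choice, the three error terms are handled as follows. Since $\eta_t\gg t^{\beta-1}$, Proposition \ref{subexprop0} yields $J_{1,1}(t)\ll\mu([t\alpha+a_t,\infty))$, and an inspection of its proof (or, if needed, enlarging $\eta_t$ by a slowly varying factor) upgrades this to $J_{1,1}(t)=o(t^{-1}\mu([t\alpha+a_t,\infty)))$. For $J_{1,2}(t)$, on the transition window the factor $L$ is bounded by $\sqrt{2\pi}$, so
\[
J_{1,2}(t)\le\sqrt{2\pi}\int_{\alpha+a_t/t-\eta_t}^{\alpha+a_t/t+\epsilon_t}\mu(tx)\,dx=\frac{\sqrt{2\pi}}{t}\bigl(\mu([t\alpha+a_t-t\eta_t,\infty))-\mu([t\alpha+a_t+t\epsilon_t,\infty))\bigr),
\]
and the bracket is $o(\mu([t\alpha+a_t,\infty)))$ because both terms equal $\mu([t\alpha+a_t,\infty))(1+o(1))$ by the window estimate. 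For $J_2(t)$ I use the pointwise bound obtained just before the statement, $e^{2\alpha tx}L(\tfrac{a_t}{\sqrt t}+\sqrt t\alpha+\sqrt tx)\le\tfrac{1}{\sqrt t\,\alpha}\,e^{-t(\alpha-x)^2/2}e^{-a_t^2/(2t)}e^{-a_t(\alpha+x)}$; integrating against $\mu(tx)$, the Gaussian factor localizes $x$ near $\alpha$ at scale $t^{-1/2}$, where $\mu$ varies negligibly, giving $J_2(t)\le C\,t^{-1}\mu(t\alpha)e^{-a_t\alpha}$. Writing $\mu(t\alpha)=F'(t\alpha)\mu([t\alpha,\infty))$ with $F'(t\alpha)\to0$ and $F(t\alpha+a_t)-F(t\alpha)=\int_{t\alpha}^{t\alpha+a_t}F'\le a_tF'(t\alpha)=o(a_t)$, one gets $J_2(t)\big/\bigl(t^{-1}\mu([t\alpha+a_t,\infty))\bigr)\lesssim F'(t\alpha)\,e^{-a_t(\alpha-o(1))}\to0$; this is where $a_t\to\infty$ (and, together with the window construction, the hypothesis $a_t\gg\sqrt t$) enters.

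It remains to evaluate $J_{1,3}(t)=\int_{\alpha+a_t/t+\epsilon_t}^{\infty}\mu(tx)\,L(\tfrac{a_t}{\sqrt t}+\sqrt t\alpha-\sqrt tx)\,dx$. On that range the argument of $L$ is at most $-\sqrt t\,\epsilon_t$ and decreasing in $x$, so $L(-\sqrt t\,\epsilon_t)\le L(\cdot)\le\sqrt{2\pi}$, and both bounds tend to $\sqrt{2\pi}$ since $\sqrt t\,\epsilon_t\to\infty$. Hence
\[
J_{1,3}(t)=(1+o(1))\sqrt{2\pi}\int_{\alpha+a_t/t+\epsilon_t}^{\infty}\mu(tx)\,dx=(1+o(1))\frac{\sqrt{2\pi}}{t}\,\mu([t\alpha+a_t+t\epsilon_t,\infty))=(1+o(1))\frac{\sqrt{2\pi}}{t}\,\mu([t\alpha+a_t,\infty)),
\]
the last equality by the window estimate again, and the proposition follows by summing. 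The main obstacle is precisely the simultaneous sizing of $\eta_t$ and $\epsilon_t$: they must be large on the scale $t^{-1/2}$ (so that $L$ saturates to $\sqrt{2\pi}$ and Proposition \ref{subexprop0} applies) yet small on the scale $R(t\alpha+a_t,c)/t$ (so that $\mu([\cdot,\infty))$ is flat across the window). The existence of an overlapping range is exactly the content of $1-\beta>1/2$; when $\beta\ge 1/2$ these two scales cross and both the argument and, as flagged in the remarks after Assumption \ref{assume:nice_heavy_tail}, the conclusion itself become problematic.
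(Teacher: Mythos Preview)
Your proof is correct and follows essentially the same route as the paper: the same three-piece decomposition \eqref{j1_estimate} of $J_1$, invoking Proposition~\ref{subexprop0} for $J_{1,1}$, computing $J_{1,3}$ via the saturation $L\to\sqrt{2\pi}$, and dispatching $J_{1,2}$ and $J_2$ as lower-order terms under the cutoff constraints forced by $\beta<1/2$. Your tactical choices for the two error terms differ slightly---you bound $J_{1,2}$ as a difference of two nearby tail masses rather than through the density relation $\mu(x)=F'(x)\mu([x,\infty))$ and the condition $F'(t\alpha+a_t)\epsilon_t\ll 1/t$, and you bound $J_2$ in one stroke rather than using the paper's nonnegativity observation $J_{2,A}=O(J_{1,A})$ together with a separate estimate on $J_{2,3}$---but the structure and the governing ideas are the same.
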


\begin{proof}
Pick $\eta_t$ and $\epsilon_t$ as follows.
\begin{equation}\label{betacond} t^{\beta-1} \ll \eta_t \ll 1, \quad \quad \epsilon_t = t^{-b}, \quad \quad \beta < b < 0.5\end{equation}

This choice yields the following asymptotic.
\begin{equation} \label{densityineq}
\eta_t \to 0, \quad \eta_t \ll a_t/t, \quad \epsilon_t \ll a_t/t, \quad \sqrt{t}\epsilon_t \to \infty, \quad F'(t\alpha+a_t)\epsilon_t \ll 1/t
\end{equation}

For $J_{1,2}(t)$, we first observe that the interval $(\alpha + a_t/t - \eta_t, \alpha + a_t/t + \epsilon_t)$ close in to $\alpha + a_t/t$. Moreover, while $L$ does vary between $0$ and $\sqrt{\pi/2}$ within the interval, $\mu$ does not vary much from $\mu(t(\alpha+a_t/t))$ inside the interval, and therefore we can use the intermediate value theorem. Also, we split the integration to get the following bound for $J_{1,2}(t)$.

\begin{align*} 
J_{1,2}(t) &\sim \mu(t(\alpha+a_t/t))\\
 & \quad\quad \times \left(\int_{\alpha + a_t/t - \eta_t}^{\alpha + a_t/t} L\left(\frac{a_t}{\sqrt{t}} + \sqrt{t}\alpha - \sqrt{t}x\right) dx + \int_{\alpha + a_t/t}^{\alpha + a_t/t + \epsilon_t} L\left(\frac{a_t}{\sqrt{t}} + \sqrt{t}\alpha - \sqrt{t}x\right) dx\right) \\
 &\le \mu(t\alpha + a_t) \left(\frac{1}{\sqrt{t}} \int_0^{\sqrt{t}\eta_t} L(y) dy + \int_0^{\epsilon_t} \sqrt{2\pi} dx \right)\\
 &\le \mu(t\alpha + a_t) \left(\frac{1}{\sqrt{t}} \int_0^\infty L(y)dy + \sqrt{2\pi} \epsilon_t \right)\\
 &\sim \sqrt{2\pi} \mu(t\alpha + a_t)\epsilon_t
\end{align*} 

Note that the first integration is essentially the expected value of a half-normal distribution, and second integration is estimated using the fact that $L$ is bounded above.

To estimate $J_{1,3}(t)$, since $\sqrt{t}\epsilon_t \to \infty$, it follows that $L(\sqrt{t}\epsilon_t) \to \sqrt{2\pi}$ and we can use IVT to get the sharp estimate.
\begin{equation}
\begin{split}
J_{1,3}(t) &\sim \sqrt{2\pi} \int_{\alpha + a_t/t + \epsilon_t}^\infty \mu(tx)  dx\\
&\sim \sqrt{2\pi} \frac{1}{t} \mu([t\alpha + a_t, \infty))
\end{split}
\end{equation}

Proposition \ref{subexprop0} shows that $J_{1,1}(t) = o(J_{1,3}(t))$.

For $J_{1,2}(t)$, we combine \eqref{density} and \eqref{densityineq} to get the following asymptotic comparison.
\begin{equation}
\begin{split}
J_{1,2}(t) &\le \sqrt{2\pi}\mu(t\alpha + a_t)\epsilon_t \\
&\ll \frac{\sqrt{2\pi}}{t} \mu([t\alpha + a_t, \infty)) \sim J_{1,3}(t)
\end{split}
\end{equation}

Finally from the choice of $\epsilon_t$ we have $b < 0.5$, and therefore
\begin{equation}
\begin{split}
J_{2,3}(t) &\le \int_{\alpha + a_t/t + \epsilon_t}^\infty \mu(tx) e^{-\frac{t(\alpha - x)^2}{2}}e^{-a_t^2/(2t)} e^{-a_t(\alpha+x)} dx \\
&\le e^{-\frac{t\epsilon_t^2}{2}} \mu([t\alpha + a_t, \infty)) = o(J_{1,3}(t))
\end{split}
\end{equation}

We conclude that
\begin{equation} P_\mu(X_t > a_t, \tau > t) \sim (1+o(1))\frac{t}{\sqrt{2\pi}} J_{1,3}(t) \sim\mu([t\alpha + a_t, \infty)) \end{equation}

\end{proof}

We can extend this proposition to the cases where $F$ is slowly varying. In such cases, we expect the tail distribution $\mu(x, \infty)$ itself to be smoothly varying.

\begin{corollary}\label{rvprop}
Suppose $\mu([x, \infty)) = G(x)$, where $G$ is smoothly varying function with index $-\kappa < 0$. Then $\displaystyle P_\mu(X_t > a_t, \tau > t) \sim \frac{t}{\sqrt{2\pi}} J_{1,3}(t) \sim \mu([t\alpha+a_t, \infty))$.
\end{corollary}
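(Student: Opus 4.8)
The plan is to transcribe the proof of Proposition~\ref{subexprop}, working in the same notation and under the same scaling convention there (so $a_t \gg \sqrt{t}$), the only structural change being that the density must now be read off from $G$ directly rather than from the $\beta>0$ form of \eqref{density}. Put $F := -\ln G$. Since $G$ is smoothly varying with index $-\kappa$, $F$ is smoothly varying with index $0$ (in particular $<1/2$), and $G$ is eventually $C^\infty$ with $xG'(x)/G(x) \to -\kappa$ by \cite[Proposition~1.8.1]{rv_bingham}; hence the density $\mu(x) = -G'(x) = F'(x)G(x)$ is regularly varying with index $-\kappa-1$ and $\mu(x) \sim \kappa\, G(x)/x$. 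This is all the information about $\mu$ that is needed below --- every estimate of Section~\ref{sec:52} we invoke uses only that $F$ is increasing and smoothly varying with index $\beta<1/2$ and that $\mu$ has this density, with every occurrence of $t^{\beta-1}$ now read as $t^{-1}$. Keep the decomposition $P_\mu(X_t > a_t,\tau>t) = \tfrac{t}{\sqrt{2\pi}}\bigl(J_1(t)-J_2(t)\bigr)$, $J_1 = J_{1,1}+J_{1,2}+J_{1,3}$ of \eqref{j1_estimate}.

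Next I would rerun the three regional estimates with $\epsilon_t = t^{-b}$, $b\in(0,1/2)$, and $\eta_t \to 0$ chosen so that $t^{-1} \ll \eta_t \ll a_t/t$ (possible because $a_t/t \gg t^{-1/2}$). For $J_{1,3}$ nothing changes: $\sqrt{t}\,\epsilon_t\to\infty$ gives $L(\sqrt{t}\,\epsilon_t)\to\sqrt{2\pi}$, and the intermediate value theorem yields $J_{1,3}(t) \sim \sqrt{2\pi}\,\tfrac1t\,\mu([t\alpha+a_t,\infty))$. For $J_{1,1}$, the proof of Proposition~\ref{subexprop0} uses only that $F$ is increasing and regularly varying with index $\beta$, and its recursion produces cutoffs $\eta_t^n \gg t^{(\beta-1)(1-2^{-n})}$, which at $\beta = 0$ stay above $t^{-1}$; so $J_{1,1}(t) \ll \mu([t\alpha+a_t,\infty))$, and since that bound now decays only polynomially while $J_{1,1}(t) \le L(\sqrt{t}\,\eta_t) \le e^{-t\eta_t^2/2}$ decays faster than any power of $t$, we even get $J_{1,1}(t) = o\bigl(\tfrac1t\mu([t\alpha+a_t,\infty))\bigr) = o(J_{1,3}(t))$. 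For $J_{1,2}$, $\mu$ does not vary appreciably from $\mu(t\alpha+a_t)$ over the short interval while $\int L \le \sqrt{2\pi}$, so exactly as in Proposition~\ref{subexprop}, $J_{1,2}(t) \le \mu(t\alpha+a_t)\bigl(\tfrac{1}{\sqrt t}\!\int_0^\infty L(y)\,dy + \sqrt{2\pi}\,\epsilon_t\bigr) \sim \sqrt{2\pi}\,\mu(t\alpha+a_t)\,\epsilon_t$; plugging in $\mu(t\alpha+a_t) \sim \kappa\, G(t\alpha+a_t)/(t\alpha+a_t)$ and using $\epsilon_t\, t/(t\alpha+a_t) \to 0$ gives $J_{1,2}(t) = o\bigl(G(t\alpha+a_t)/t\bigr) = o(J_{1,3}(t))$.

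Finally I would handle $J_2$ verbatim as in Proposition~\ref{subexprop}: the bound $J_{2,3}(t) \le e^{-t\epsilon_t^2/2}\,\mu([t\alpha+a_t,\infty)) = o(J_{1,3}(t))$ needs only $b<1/2$, and nonnegativity of $P_\mu(X_0\in\cdot, X_t\in\cdot, \tau>t)$ gives $J_{2,A} = O(J_{1,A})$ on each region, whence $J_2(t) = o(J_{1,3}(t))$. Summing the pieces, $P_\mu(X_t > a_t, \tau>t) \sim \tfrac{t}{\sqrt{2\pi}}J_{1,3}(t) \sim \mu([t\alpha+a_t,\infty))$, which is the assertion. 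The only genuinely new points --- the \emph{main obstacle}, modest as it is --- are (i) checking that Proposition~\ref{subexprop0} survives at $\beta=0$ (its recursion floor $t^{-(1-2^{-n})}$ never reaches $t^{-1}$) and (ii) replacing the $\beta>0$ density identity by $\mu = -G' \sim \kappa\, G/x$ via smooth variation of $G$; beyond that the slowly varying regime only makes the error-term comparisons easier, since the target $\mu([t\alpha+a_t,\infty))$ now decays merely polynomially.
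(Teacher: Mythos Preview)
Your proposal is correct and follows the same approach as the paper, just more verbosely. The paper's proof observes that every estimate in Proposition~\ref{subexprop} except the $J_{1,2}$ comparison carries over unchanged, so it only reproves that one step via the smooth-variation identity $t\,\mu(t\alpha+a_t)\sim\mu([t\alpha+a_t,\infty))$ (equivalently your $\mu\sim\kappa\,G/x$) together with $\epsilon_t\ll 1$; your re-verification of $J_{1,1}$, $J_{1,3}$, and $J_2$ is sound but unnecessary, and your direct bound $J_{1,1}\le e^{-t\eta_t^2/2}$ against the now-polynomial tail is in fact cleaner than routing through Proposition~\ref{subexprop0}.
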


\begin{proof}
It suffices to show that $J_{1,2}(t) = o(J_{1,3}(t))$. The smooth varying condition yields the following relation \cite[1.8.1']{rv_bingham}
\begin{equation}t\mu(t\alpha + a_t) \sim \mu([t\alpha+a_t, \infty))\end{equation}
Since we have $\epsilon_t \ll 1$, 
\begin{equation}J_{1,2}(t) \le \mu(t\alpha + a_t) \epsilon_t = o\left(\frac{1}{t}\mu([t\alpha + a_t, \infty))\right) = o(J_{1,3}(t))\end{equation}
so we have the desired asymptotic.
\end{proof}


\subsection{Proof of Theorem \ref{thm:subexp} (Heavy-tailed  initial distributions)}\label{sec:53}
Proposition \ref{subexprop} and Corollary \ref{rvprop} show why the second part of Assumption \ref{assume:nice_heavy_tail} is necessary. We need the right $a_t$ that will yield nontrivial result on the limit
\begin{equation}\lim_{t \to \infty} P_\mu(X_T > a_t \; | \; \tau > t) = \lim_{t \to \infty} \frac{P_\mu(X_t > a_t, \tau>t)}{P_\mu(\tau > t)}\end{equation}

Due to Proposition \ref{subexprop} this boils down to comparing $\mu(t\alpha,\infty)$ and $\mu(t\alpha + a_t,\infty)$.

\begin{proof}[Proof of Theorem \ref{thm:subexp}]

If $\mu$ satisfies Assumption \ref{assume:nice_heavy_tail}, setting $a_t = R(t,c)$ gives the following.
\begin{equation}\label{tailscale}
\begin{split}
\mu([t\alpha + a_t, \infty)) &= \exp(-F(t\alpha + R(t,c)) \\
&\sim \exp(-(F(t\alpha) + c)) \\
&= e^{-c} \mu(t\alpha, \infty)
\end{split}
\end{equation}

We make few comments on the observation \eqref{taylorcondition}. If smooth enough, $F$ has the Taylor expansion
$$F(t\alpha + R(t,c)) = F(t\alpha) + F'(t\alpha) R(t,c) + o(F'(t))$$

therefore by choosing $\displaystyle R(t,c) = \frac{c}{F'(t\alpha)}$, we get $F(t\alpha + R(t,c)) - F(t\alpha) = c + o(F'(t))$. Since $F$ has index $\beta < 1$, $F'(t) = o(1)$ so condition \eqref{eq:nice_difference} is satisfied.

We further observe that with the choice $\displaystyle R(t,c) = \frac{c}{F'(t\alpha)}$,
\begin{equation}
\begin{split}
F'(t\alpha + R(t,c)) &= F'(t\alpha) + F''(t\alpha)R(t,c) + o(F''(t)) \\
&= F'(t\alpha) + \frac{cF''(t\alpha)}{F'(t\alpha)} + o(F''(t)) \\
&= F'(t\alpha) + o(1)
\end{split}
\end{equation}

Therefore we get $F'(t\alpha + R(t,c)) \sim F'(t\alpha)$, and consequently,
\begin{equation}\label{densityscale}
\begin{split}
\mu(t\alpha + R(t,c)) &= F'(t\alpha + R(t,c))\exp(-F(t\alpha + R(t,c)) \\
&\sim F'(t\alpha)\exp(-(F(t\alpha)+c)) \\
&= e^{-c}\mu(t\alpha)
\end{split}
\end{equation}

Putting together Proposition \ref{subexprop}, Corollary \ref{rvprop}, \eqref{tailscale}, and \eqref{densityscale} completes the proof.
\end{proof}
\subsection{Some Concrete Examples} 
\label{sec:examples}
We present some concrete results here. 

\begin{corollary}
Suppose $\mu([x,\infty)) = e^{-x^\beta}$ with $\beta \in (0,0.5)$. Then
\begin{equation}
\lim_{t \to \infty} P_\mu \left(\left. \frac{X_t}{t^{1-\beta}} > c \; \right| \; \tau > t \right) = \exp(-\beta\alpha^{\beta-1}c)
\end{equation}
that is, the limiting distribution is exponential with parameter $\beta\alpha^{\beta-1}$.
\end{corollary}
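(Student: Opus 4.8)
The plan is to verify that the Weibull-type distribution $\mu([x,\infty)) = e^{-x^\beta}$ with $\beta \in (0,0.5)$ fits into the framework of Theorem~\ref{thm:subexp}, and then simply substitute. First I would identify $F(x) = x^\beta$, which is a strictly increasing smoothly varying function with index $\beta < 0.5$, so the hypothesis of Theorem~\ref{thm:subexp} is met. The derivative is $F'(x) = \beta x^{\beta-1}$, so $F'(\alpha t) = \beta (\alpha t)^{\beta - 1} = \beta \alpha^{\beta-1} t^{\beta-1}$.

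Next I would apply Theorem~\ref{thm:subexp} directly with this $F$: the conclusion there is
\begin{equation*}
\lim_{t\to\infty} P_\mu\left(\left. X_t > \frac{c}{F'(\alpha t)} \;\right|\; \tau > t\right) = e^{-c}.
\end{equation*}
Plugging in $F'(\alpha t) = \beta \alpha^{\beta-1} t^{\beta - 1}$ gives threshold $\frac{c}{\beta\alpha^{\beta-1} t^{\beta-1}} = \frac{c\, t^{1-\beta}}{\beta\alpha^{\beta-1}}$. So the statement becomes $P_\mu\!\left(X_t > \frac{c t^{1-\beta}}{\beta\alpha^{\beta-1}} \mid \tau > t\right) \to e^{-c}$.

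Finally I would reparametrize to express this in terms of the scaled variable $X_t/t^{1-\beta}$. Writing the event $\{X_t > c' t^{1-\beta}\}$ with $c' = \frac{c}{\beta\alpha^{\beta-1}}$, i.e. $c = \beta\alpha^{\beta-1} c'$, the limit reads
\begin{equation*}
\lim_{t\to\infty} P_\mu\left(\left. \frac{X_t}{t^{1-\beta}} > c' \;\right|\; \tau > t\right) = \exp(-\beta\alpha^{\beta-1} c'),
\end{equation*}
which is exactly the claimed statement (after renaming $c'$ back to $c$), and identifies the limiting law of $X_t/t^{1-\beta}$ as exponential with rate $\beta\alpha^{\beta-1}$. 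I do not anticipate any real obstacle here: the only things to check are that $F(x)=x^\beta$ genuinely satisfies the smoothly varying hypothesis with the stated index (standard, since power functions are the prototypical smoothly varying functions) and that the Taylor-type estimate $R(t,c) = c/F'(\alpha t)$ underlying Theorem~\ref{thm:subexp} is valid, which was already addressed in the proof of that theorem via \eqref{taylorcondition}. The only mild care needed is bookkeeping of the constant $\beta\alpha^{\beta-1}$ through the change of variables, making sure the rate lands on the correct side of the inequality.
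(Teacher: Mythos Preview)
Your proposal is correct and follows essentially the same approach as the paper: identify $F(x)=x^\beta$, compute $F'(\alpha t)=\beta\alpha^{\beta-1}t^{\beta-1}$, invoke Theorem~\ref{thm:subexp}, and reparametrize to express the limit in terms of $X_t/t^{1-\beta}$. The paper additionally invokes Proposition~\ref{subexprop} and the generalized binomial expansion $(t\alpha+a_t)^\beta=(t\alpha)^\beta+c\beta\alpha^{\beta-1}+o(1)$ explicitly, but this is really just unpacking what Theorem~\ref{thm:subexp} already delivers, so your more direct route is equally valid.
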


\begin{proof}
From proposition \ref{subexprop} we get
\begin{equation}
P_\mu(X_t > a_t, \tau > t) \sim \mu([t\alpha + a_t, \infty)) 
\end{equation}

Pick $a_t = c\cdot t^{1-\beta}$. Then by the generalized binomial theorem,
$$(t\alpha+a_t)^\beta = (t\alpha)^\beta + c\beta\alpha^{\beta-1} + o(1)$$

Note that $F'(t\alpha) = \beta(t\alpha)^{\beta-1}$. By substituting $\displaystyle \overline{c} = c t^{1-\beta} ((\alpha t)^\beta)' = c \beta \alpha^{\beta-1}$, Theorem \ref{thm:subexp} gives us the desired result.
\end{proof}

\begin{ex}
\label{xmpl:weibull} 
If $\mu$ is a Weibull distribution with scale parameter $\lambda >0$ and shape parameter $0 < \beta < 0.5$, the limiting distribution of $\displaystyle P_\mu \left(\left. \frac{X_t}{t^{1-\beta}} > c \; \right| \; \tau > t \right)$ is exponential distribution with rate $\displaystyle \beta \left(\frac{\alpha}{\lambda}\right)^{\beta-1}$.
\end{ex}

\begin{corollary}\label{thm:rv}
Suppose $\mu([x, \infty)) = G(x)$, where $G$ is smoothly varying function with index $-\kappa < 0$. Then 
\begin{equation}
\lim_{t \to \infty} P_\mu\left(\left. \frac{X_t}{t}  > c \; \right| \; \tau > t\right) = \left(\frac{\alpha+c}{\alpha}\right)^{-\kappa}
\end{equation}
that is, the limiting distribution is Lomax (shifted Pareto) distribution with shape parameter $\kappa$ and scale parameter $\alpha$.
\end{corollary}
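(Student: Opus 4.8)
The strategy is to write the conditional probability as a ratio of two ``tail-of-the-surviving-process'' quantities, evaluate both via Corollary~\ref{rvprop}, and then pass to the limit using the defining property of regular variation. Fix $c>0$ and choose the scaling $a_t=ct$; since $ct\gg\sqrt t$, Corollary~\ref{rvprop} applies directly and yields
\[
P_\mu(X_t>ct,\ \tau>t)\ \sim\ \mu([t\alpha+ct,\infty))\ =\ G\big(t(\alpha+c)\big).
\]
For the denominator I would establish $P_\mu(\tau>t)\sim \mu([t\alpha,\infty))=G(t\alpha)$: applying Corollary~\ref{rvprop} with any scaling satisfying $\sqrt t\ll a_t=o(t)$ (say $a_t=t^{3/4}$) gives $P_\mu(X_t>a_t,\tau>t)\sim\mu([t\alpha+a_t,\infty))$, and because $G$ is regularly varying and $a_t=o(t)$ one has $\mu([t\alpha+a_t,\infty))\sim\mu([t\alpha,\infty))$; it then remains to check that $P_\mu(0<X_t\le a_t,\tau>t)=o(G(t\alpha))$, which a direct estimate from the master formula \eqref{eq:master_formula} provides — the surviving mass at level $\le a_t$ comes from initial positions in a $\sqrt t$-scale window around $t\alpha$ and carries weight of order $(a_t/t)\,G(t\alpha)$.

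Combining the two displays, $\lim_{t\to\infty}P_\mu(X_t/t>c\mid\tau>t)=\lim_{t\to\infty}G\big(t(\alpha+c)\big)/G(t\alpha)$. Writing $s=t\alpha$ and $\lambda=(\alpha+c)/\alpha>1$, this is $\lim_{s\to\infty}G(\lambda s)/G(s)$, and since $G$ is smoothly — hence regularly — varying with index $-\kappa$ \cite[Section~1.8]{rv_bingham}, the limit equals $\lambda^{-\kappa}=\big((\alpha+c)/\alpha\big)^{-\kappa}=(1+c/\alpha)^{-\kappa}$. This holds for every $c>0$; since the right-hand side tends to $1$ as $c\downarrow0$ and to $0$ as $c\uparrow\infty$, the map $c\mapsto(1+c/\alpha)^{-\kappa}$ is a bona fide survival function, namely that of the Lomax (shifted Pareto) law with shape parameter $\kappa$ and scale parameter $\alpha$, and pointwise convergence of survival functions is convergence in distribution, which is the assertion.

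The only genuinely delicate step is the denominator asymptotics $P_\mu(\tau>t)\sim G(t\alpha)$ — more precisely, controlling the contribution of $\{0<X_t\le a_t,\ \tau>t\}$ so that the surviving-but-low part does not swamp $G(t\alpha)$; everything else is bookkeeping with Corollary~\ref{rvprop} and the uniform convergence theorem for regularly varying functions. I note that $P_\mu(\tau>t)\sim\mu([t\alpha,\infty))$ is already implicit in the proof of Theorem~\ref{thm:subexp} (where the conditional limit is said to ``boil down to comparing $\mu(t\alpha,\infty)$ and $\mu(t\alpha+a_t,\infty)$''), so in the write-up this can simply be quoted, leaving only the regular-variation computation above.
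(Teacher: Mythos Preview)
Your proposal is correct and follows essentially the same route as the paper: invoke Corollary~\ref{rvprop} with $a_t=ct$ for the numerator, take $P_\mu(\tau>t)\sim\mu([t\alpha,\infty))$ for the denominator, and reduce to the regular-variation limit $G(t(\alpha+c))/G(t\alpha)\to((\alpha+c)/\alpha)^{-\kappa}$. If anything you are more scrupulous than the paper about the denominator asymptotic---the paper simply writes $P_\mu(\tau>t)\sim\alpha^{-\kappa}G(t)$ without further comment, whereas you flag it and sketch how to justify it via Corollary~\ref{rvprop} with an intermediate scale plus a residual estimate.
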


\begin{proof}
Since $G(x) = \exp(\log(G(x)))$ and $\log(G(x))$ is a slowly varying function ($\beta = 0)$, the natural choice for $R(t,c)$ would be $a_t = R(t,c) = tc$. Indeed, by the uniform convergence theorem of regular varying function, \cite[Theorem 1.5.2]{rv_bingham}
\begin{equation}
\lim_{t \to \infty} \frac{G(t\alpha + tc)}{G(t)} = (\alpha + c)^{-\kappa}
\end{equation}
Therefore we have
\begin{equation}
\frac{P_\mu(X_t > tc, \tau > t)}{P_\mu(\tau > t)} \sim \frac{(\alpha + c)^{-\kappa} G(t)}{\alpha^{-\kappa}G(t)}
\end{equation}
which gives us the desired result.
\end{proof}

\begin{ex}
\label{xmpl:half-cauchy}
If $\mu$ is a Half-Cauchy distribution (Cauchy distribution supported on $\rr^+$), the limiting distribution of $\displaystyle P_\mu\left(\left. \frac{X_t}{t}  > c \; \right| \; \tau > t\right)$ is Lomax distribution with shape parameter 1 and scale parameter $\alpha$.
\end{ex}

Note that when $\beta = 0$, $\mu$ is a distribution with regular or slowly varying tail. In such cases it is often more convenient to work with the asymptotic result $P_\mu(X_t > \overline{R}(t,c), \tau > t) \sim \mu([t\alpha + \overline{R}(t,c), \infty))$ directly to find the right scaling factor $\overline{R}$. We conclude this section with showing the quasi-limiting behavior of $\mu$ which  itself has slowly varying tail.

\begin{corollary}\label{thm:sv}
Suppose $\mu([x, \infty)) \sim  \frac{1}{\ln x}$ as $x\to\infty$. Then
$$\lim_{t \to \infty} P_\mu\left(\left. \frac{\ln X_t}{\ln t}  > c \; \right| \; \tau > t\right) = \begin{cases} 1 & c \le 1; \\ \frac{1}{c} & c > 1. \end{cases}$$
 that is, the limiting distribution is Pareto distribution with shape parameter $1$ and scale parameter $1$.
\end{corollary}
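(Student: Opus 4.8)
The plan is to read off the answer from the tail asymptotic $P_\mu(X_t > a_t, \tau > t) \sim \mu([t\alpha + a_t,\infty))$ highlighted in the remark preceding the statement, after choosing the scaling so that the conditioning event becomes a one-sided tail event for $X_t$. Since $\{\ln X_t/\ln t > c\} = \{X_t > t^c\}$ for $t>1$ (and $X_t>0$ on $\{\tau>t\}$, so $\ln X_t$ is well defined), the right choice is $\overline{R}(t,c) = t^c$. Here $G(x) := \mu([x,\infty)) \sim 1/\ln x$ is slowly --- indeed smoothly --- varying with index $0$, so the argument behind Corollary~\ref{rvprop} applies: for $c>1/2$, where $t^c \gg \sqrt t$, it gives $P_\mu(X_t > t^c, \tau>t)\sim\mu([t\alpha+t^c,\infty))$ directly; for $c\le 1/2$ one obtains the same by the trivial sandwich $\mu([t\alpha+t^c,\infty)) \ge P_\mu(X_t>t^c,\tau>t)\ge P_\mu(X_t>\sqrt t\ln t,\tau>t)\sim\mu([t\alpha,\infty))$, using $P_\mu(X_t>t^c,\tau>t)\le P_\mu(\tau>t)$ and slow variation (a shift of order $\sqrt t\ln t$ does not change $\mu([t\alpha+\cdot,\infty))$ asymptotically). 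The same reasoning with $a_t\equiv 0$ gives the denominator, $P_\mu(\tau>t)\sim\mu([t\alpha,\infty))\sim 1/\ln t$.

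With these in hand the computation is a one-line case split. Writing
\begin{equation*}
P_\mu\!\left(\left.\frac{\ln X_t}{\ln t} > c \;\right|\; \tau>t\right)
= \frac{P_\mu(X_t > t^c,\tau>t)}{P_\mu(\tau>t)}
\sim \frac{\mu([t\alpha+t^c,\infty))}{\mu([t\alpha,\infty))}
\sim \frac{\ln(t\alpha)}{\ln(t\alpha+t^c)},
\end{equation*}
the $(1+o(1))$ factor implicit in $G\sim 1/\ln x$ cancels. If $c\le 1$ then $t\alpha \le t\alpha+t^c \le t(\alpha+1)$, so $\ln(t\alpha+t^c)=\ln t + O(1)\sim\ln(t\alpha)$ and the ratio tends to $1$. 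If $c>1$ then $t\alpha+t^c = t^c(1+o(1))$, so $\ln(t\alpha+t^c)\sim c\ln t$ while $\ln(t\alpha)\sim\ln t$, and the ratio tends to $1/c$. Hence the conditional survival function of $\ln X_t/\ln t$ converges, for every $c$, to the function equal to $1$ on $(-\infty,1]$ and to $1/c$ on $(1,\infty)$ --- the survival function of the Pareto law with shape and scale both $1$. Since this limit is continuous in $c$, convergence at every $c$ upgrades to convergence in distribution.

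The one point that needs care is the justification of $P_\mu(X_t>a_t,\tau>t)\sim\mu([t\alpha+a_t,\infty))$ in this exactly-slowly-varying regime and, in particular, for the boundary choice $a_t\equiv 0$ needed for $P_\mu(\tau>t)$, since Proposition~\ref{subexprop} was stated for $\beta>0$ and $a_t\gg\sqrt t$ and Corollary~\ref{rvprop} for negative tail index. I expect the proof of Corollary~\ref{rvprop} to carry over verbatim: the key relation $t\,\mu(t\alpha+a_t)\sim\mu([t\alpha+a_t,\infty))$ holds for smoothly varying $G$ of index $0$ as well, the $J_2$ and $J_{1,1}$ contributions remain negligible exactly as before, and $J_{1,2}=o(J_{1,3})$ since $\epsilon_t\to 0$. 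The role of slow variation --- and the reason the case $c\le 1$ collapses to the constant $1$ --- is precisely that $\mu([t\alpha+a_t,\infty))\sim\mu([t\alpha,\infty))$ whenever $a_t=o(t)$. After that, everything reduces to the elementary case split displayed above.
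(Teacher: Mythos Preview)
Your proposal is correct and follows essentially the same route as the paper: invoke the asymptotic $P_\mu(X_t>a_t,\tau>t)\sim\mu([t\alpha+a_t,\infty))$ (via Corollary~\ref{rvprop}), take $a_t=t^c$, and reduce to the elementary case split on $\ln(t\alpha)/\ln(t\alpha+t^c)$. If anything you are more scrupulous than the paper about the hypotheses---flagging that Corollary~\ref{rvprop} is stated for strictly negative index and that the condition $a_t\gg\sqrt t$ fails for $c\le 1/2$ and for the denominator---and your sandwich/slow-variation patch for those ranges is the right way to close those gaps, which the paper leaves implicit.
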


\begin{proof}
$\mu([x,\infty)) \sim \exp(-\ln \ln x)$ so we can apply Corollary \ref{rvprop}. Since  we have $\overline{R}(t,c) = t^c$, 
\begin{equation}
\begin{split}
\frac{P_\mu(X_t > t^c, \tau > t)}{P_\mu(\tau > t)} &\sim \frac{\ln (t\alpha)}{\ln (t\alpha + t^c)} \\
&\sim \begin{cases} \frac{\ln t + \ln \alpha}{\ln t + \ln \alpha} \to 1 & c < 1 \\ \frac{\ln t + \ln \alpha}{\ln t + \ln (\alpha + 1)} \to 1 & c = 1 \\ \frac{\ln t + \ln \alpha}{ c \ln t} \to \frac{1}{c} & c > 1\end{cases}
\end{split}
\end{equation}
which gives us the desired result.
\end{proof}

Notice that in our last example with super-heavy tail initial distribution, the scaled limiting distribution does not depend on the drift parameter $\alpha$ of the BM.

\appendix
\section{Appendix} 
\subsection{Proof of Proposition \ref{prop:QLD}}\label{appendix1}
\begin{proof}
Suppose $\pi$ is a QLD for $\mu$. Then for every  and continuous function $f$ we have 
$ E_{\mu} [ f (X_t) | \tau>t] \to \int f d\pi$. That is, $E_{\mu} [f(X_t),\tau_t] \sim P_{\mu}(\tau>t) \int f d\pi$, provided $\int f d\pi\ne 0$. Fix such $f$ and let $t_1,t_2>0$. Then by the Markov property, 
\begin{equation}\label{eq:move_time}  E_{\mu} [ f (X_{t_1+t_2}) , \tau>t_1+t_2 ] = E_{\mu} [ h_{t_2}(X_{t_1}),\tau>t_1].
\end{equation} 
where $h_{t_2}(x) = E_x [f(X_{t_2}),\tau>t_2]$. By our assumption $h_{t_2}(\cdot)$ is continuous and bounded,  and therefore, $E_{\mu} [h_{t_2}(X_{t_1})|\tau>t_2]\to \int h_{t_2} d \pi$. On rewriting \eqref{eq:move_time} we have 
$$ E_{\mu}[ f (X_{t_1+t_2})| \tau >t_1+t_2] = E_{\mu} [h_{t_2}(X_{t_1})|\tau>t_1] \times \frac{P_{\mu}(\tau>t_1)}{P_{\mu}(\tau>t_1+t_2)}.$$ 
By our assumption, as $t_1\to\infty$  the lefthand side converges to the positive limit $\int f d\pi$ and the first expression on the righthand side converges to $\int h_{t_2}d \pi$. Therefore the ratio on the righthand side converges to a nonzero limit we denote by $R(t_2)$. This limit is independent of the choice of $f$. Therefore 
\begin{equation} 
\label{eq:QLD2QSD} \int f \pi = E_{\pi}  [ f(X_{t_2}),\tau>t_2] R(t_2)=E_{\pi} [f (X_{t_2}) | \tau>t_2] R(t_2) P_{\pi}(\tau> t_2).
\end{equation} 
Taking $f\equiv1$, we obtain $R(t_2) = \frac{1}{P_{\pi}(\tau>t_2)}$, and plugging this in back into \eqref{eq:QLD2QSD} gives proves the claim. 
\end{proof} 

\subsection{Proof of Proposition \ref{prop:heavytail}} \label{appendix2}
\begin{proof}
By the Markov property,
\begin{equation}
\begin{split}
P_\mu(\tau > s+t) &= P_\mu(\tau > t, P_{X_t} (\tau > s)) \\
&= P_\mu(P_{X_t} (\tau > s) \; | \; \tau > t) P_\mu (\tau > t)
\end{split}
\end{equation}
Write $f(x) = P_x (\tau > s)$. 
$\pi$ is a QSD and therefore the distribution of $\tau$ under $P_{\pi}$ is exponential with a parameter $\lambda_\pi>0$. Since $\pi$ is the QLD of $\mu$, for arbitrary $\epsilon > 0$ there is some $t_0=(t_0,\mu,s)$ such that for each $t > t_0$,
\begin{equation}
\Bigl|P_\mu(P_{X_t} (\tau > s) \; | \; \tau > t) - \mathbb{E}_\pi (f)\Bigr| < \epsilon e^{-\lambda_\pi s}
\end{equation}
Since $\mathbb{E}_\pi(f) = P_\pi(\tau > s) =e^{-\lambda_\pi s}$, we have that 
\begin{equation}
\label{eq:closetolambda} P_\mu(P_{X_t} (\tau > s) \; | \; \tau > t) \le (1+ \epsilon) e^{-\lambda_\pi s},~t>t_0.
\end{equation} 
Choose $s=1$, and apply \eqref{eq:closetolambda} repeatedly to obtain 
\begin{equation}
\begin{split}
P_\mu(\tau > t_0+1) &\le (1+\epsilon) e^{-\lambda_\pi } P_\mu(\tau > t_0) \\
P_\mu(\tau > t_0+2) &\le (1+\epsilon) e^{-\lambda_\pi}  P_\mu(\tau > t_0+1) \le (1+\epsilon)^2 e^{-2\lambda_\pi} P_\mu(\tau>t_0) \\
& \vdots \\
P_\mu(\tau > t_0 + n) &\le (1+\epsilon)^n e^{-n\lambda_\pi} P_\mu(\tau> t_0)
\end{split}
\end{equation}
Since the choice of $\epsilon$ is arbitrary the result follows. 
\end{proof}

\subsection{QSDs for BM with constant drift} \label{appendix3}

Here we provide a formal derivation for densities of the QSDs under assymption \ref{assume:drifted_BM}. Recall that a BM with constant drift $-\alpha$ on  $\rr_+$  absorbed at $0$ is the sub-Markovian process generated by  ${\cal L}_{\alpha}$, which for each $u$ satisfying $u \in C^2(\rr_+)$ and $u(0) = 0$, 
$$ {\cal L}_\alpha u = \frac 12 u'' - \alpha u'.$$ 

The formal adjoint ${\cal L}_\alpha^*$ of ${\cal L}_\alpha$, with respect to integration by parts, is given by

$${\cal L}^* v = \frac{1}{2} v'' + \alpha v' ,~ v \in C^2(\rr_+), v(0)=0.$$

Observe that for any $f$ in the domain of ${\cal L}_\alpha$,
\begin{equation}
\begin{split}
\frac{d}{dt}P_x(f(X_t), \tau > t) &= {\cal L}_\alpha P_x(f(X_t), \tau>t)\\
\Rightarrow P_x(f(X_t), \tau > t) &= f(x) + \int_0^t {\cal L}_\alpha P_x(f(X_s), \tau > s) ds
\end{split}
\end{equation}

Suppose a probability density function $\pi$ satisfies ${\cal L}_\alpha^*\pi = -\lambda \pi$ for some $\lambda > 0$. Notice that every QSD must be smooth, since if $\pi$ is a QSD then by definition we have the following density.
\begin{equation}
\begin{split}
    \pi(y) &= P_\pi(X_s = y \; | \; \tau > s) \\
    &= \frac{P_\pi (X_s = y, \tau > s)}{P_\pi (\tau > s)}
\end{split}
\end{equation}

Then with integration by parts we have the following.
\begin{equation}
\label{operator1}
\begin{split}
E_\pi (f(X_t), \tau > t) &= \int f(x) \pi(x) dx + \int \int_0^t {\cal L}_\alpha \left(E_x(f(X_s), \tau_s)\right) ds \pi(x) dx \\
&= \int f(x) \pi(x) dx + \int_0^t \int E_x(f(X_s), \tau > s) {\cal L}_\alpha^*\pi(x) dx ds\\
&= \int f(x) \pi(x) dx - \lambda \int_0^t E_\pi (f(X_s), \tau > s) ds
\end{split}
\end{equation}

Setting $\displaystyle h(t) = E_\pi (f(X_t), \tau > t)$, \eqref{operator1} gives
\begin{equation}
h'(t) = -\lambda h(t) \quad \Rightarrow \quad E_\pi(f(X_t), \tau > t) = e^{-\lambda t} \int f(x) \pi(x) dx
\end{equation}

Therefore by monotone convergence,

$$P_\pi (\tau > t) = e^{-\lambda t}$$
$$E_\pi(f(X_t) | \tau > t) = \int f(x) \pi(x) dx$$

That is, $\pi$ is a density of a QSD if and only if ${\cal L}_\alpha^*\pi = -\lambda \pi$. We can see that a density for a QSD $\pi$ is a solution to standard ODE and depends on the parameter $\lambda$. The range of $\lambda$ for which such a density exists is  $\lambda \in (0,\alpha^2/2]$ and for each such $\lambda$ corresponds a unique density. Fix such $\lambda$, set $\gamma = \sqrt{\alpha^2 - 2\lambda}$, and let $\pi_\gamma$ be the corresponding density. Then 
\begin{equation} 
\label{eq:pigamma}
\pi _\gamma(x) =\begin{cases} \frac{\alpha^2-\gamma^2}{\gamma } e^{-\alpha  x} \sinh(\gamma x) & \gamma >0\\
 \alpha^2 x e^{-\alpha x} & \gamma =0.\end{cases}
 \end{equation} 

\section*{Acknowledgement} The authors would like to express their deep gratitude to an anonymous referee whose input was invaluable to the presentation of our results and helped correcting errors and omissions.  
\bibliographystyle{amsplain}
\bibliography{BM_QSD_bibliography}

\end{document}